\documentclass[11pt]{amsart}
%%%%%%%%%%%%%%%%%%%%%%%%%%%%%%%%%%%%%%%%%%%%%%%%%%%%%%%%%%%%%%%%%%%%%%%%%%%%%%%%%%%%%%%%%%%%%%%%%%%%%%%%%%%%%%%%%%%%%%%%%%%%%%%%%%%%%%%%%%%%%%%%%%%%%%%%%%%%%%%%%%%%%%%%%%%%%%%%%%%%%%%%%%%%%%%%%%%%%%%%%%%%%%%%%%%%%%%%%%%%%%%%%%%%%%%%%%%%%%%%%%%%%%%%%%%%
\usepackage{amsmath,amssymb}
\usepackage{color}
\usepackage[all,curve,frame]{xy}

\setcounter{MaxMatrixCols}{10}
%TCIDATA{OutputFilter=LATEX.DLL}
%TCIDATA{Version=5.50.0.2890}
%TCIDATA{<META NAME="SaveForMode" CONTENT="1">}
%TCIDATA{BibliographyScheme=Manual}
%TCIDATA{LastRevised=Tuesday, May 04, 2010 15:03:34}
%TCIDATA{<META NAME="GraphicsSave" CONTENT="32">}

\vfuzz2pt
\newtheorem{defi}{Definici\'{o}n}[section]
\newtheorem{coro}[defi]{Corollary}
\newtheorem{lem}[defi]{Lemma}
\newtheorem{rem}[defi]{Remark}
\newtheorem{prop}[defi]{Proposition}
\newtheorem{thm}[defi]{Theorem}
\newtheorem{ej}[defi]{Example}

\newcommand{\eps}{\varepsilon}

\newcommand{\benu}{\begin{enumerate}}
\newcommand{\enu}{\end{enumerate}}
\newcommand{\til}{\widetilde}
\newcommand{\al}{\alpha}

\newcommand{\ga}{\gamma}
\newcommand{\be}{\beta}

\newcommand{\fle}{\rightarrow}
\newtheorem{sinobservacion}[defi]{}
\newenvironment{sinob}{\begin{sinobservacion} \rm}{\end{sinobservacion} }

\setlength{\textwidth}{6.2in}       %ancho texto.

\setlength{\oddsidemargin}{0.08in}
\setlength{\evensidemargin}{0.08in}

\begin{document}

\title[On cycles of length three]
{On cycles of length three}
\author[Chaio]{Claudia Chaio}
\address{Centro marplatense de Investigaciones Matem\'aticas, Facultad de Ciencias Exactas y
Naturales, Funes 3350, Universidad Nacional de Mar del Plata, 7600 Mar del
Plata, Argentina y CONICET Argentina}
\email{cchaio@mdp.edu.ar}

\author[Guazzelli]{Victoria Guazzelli}
\address{Centro marplatense de Investigaciones Matem\'aticas, Facultad de Ciencias Exactas y
Naturales, Funes 3350, Universidad Nacional de Mar del Plata, 7600 Mar del
Plata, Argentina}
\email{vguazzelli@mdp.edu.ar}

\author[Suarez]{Pamela Suarez}
\address{Centro marplatense de Investigaciones Matem\'aticas, Facultad de Ciencias Exactas y
Naturales, Funes 3350, Universidad Nacional de Mar del Plata, 7600 Mar del
Plata, Argentina}
\email{psuarez@mdp.edu.ar}

\keywords{}
\subjclass[2000]{16G70, 16G20, 16E10}
\maketitle

\begin{abstract} We prove that if $A$ is a string algebra then there are not three irreducible  morphisms between indecomposable $A$-modules such that its composition belongs to $\Re^{6} \backslash \Re^{7}$, whenever the compositions of two of them are not in  $\Re^{3}$. Moreover, for any positive integer $n \geq 3$, we show that there are $n$ irreducible morphisms such that their composition is in $\Re^{n+4} \backslash \Re^{n+5}$.
\end{abstract}

\section*{Introduction}
Introduced by Auslander and Reiten in the early 70's, the notion of irreducible morphisms plays an important role in the representation theory of artin algebras.

It is well-known that the composition of $n$ irreducible morphisms between indecomposable modules
over an artin algebra $A$ belongs to $\Re^n,$ the $n$-th power of the radical $\Re$ of the
module category. Such a composition could be a non-zero morphism in
$\Re^{n+1}$.  This is still  a problem of interest in the representation theory of artin algebras, and in the last years, there have been  some  advances in such a direction, see for example \cite{Cha},  \cite{CCT:07}, and
\cite{ChacotreII}.

In \cite{CCT:07}, Coelho, Trepode and the first named author characterized when the composition of two irreducible morphisms is non-zero and belongs to $\Re^{3}$. Moreover, they proved that if two irreducible morphisms between indecomposable $A$-modules such that their composition is non-zero and belongs to a greater power of the radical, greater than two, then such composition  is at least in $\Re^{4}$.

Later in \cite{AC}, Alvares and Coelho proved that if $f$ and $g$ are irreducible morphisms  between  indecomposable $A$-modules such that $0 \neq fg \in \Re^{3}$ then $fg \in \Re^{5}$. Furthermore, they showed
an example of two irreducible morphisms whose composition is in $\Re^{5} \backslash \Re^{6}$. To prove such a result they used a result due to Hoshino, proved in \cite{H}, that if a module $X$ in $\Gamma_A$ is such that $DTrX=X$, then either the connected component of $\Gamma_A$ which contains $X$ is a homogeneous stable tube or $A$ is a local Nakayama algebra.

Finally, in \cite{Cha2}, the first named author generalized the result proven in \cite{AC}.
Precisely,  the author  proved that given an artin algebra $A$ where the configurations of almost split sequences have at most two indecomposable middle terms,
then the  non-zero composition of $n$ irreducible morphisms on a left almost pre-sectional path is such that it belongs to $\Re^{n+3}$ for $n \geq 1$.

As a consequence of the above mentioned result, for any artin algebra, we know that if  the non-zero composition of any three irreducible morphisms ${h_{i}}$ between indecomposable $A$-modules,  is such that $h_{3}h_2 h_{1} \in \Re^{4}$, $h_{3}h_2 \notin \Re^{3}$ and  $h_2 h_{1} \notin \Re^{3}$ then $h_{3}h_2 h_{1} \in \Re^{6}$.

A natural question now is if the composition of three irreducible morphisms between indecomposable $A$-modules can be in $\Re^{6} \backslash  \Re^{7}$, whenever the composition of any  two of them  are not in $\Re^{3}$, that is, behaves well.

In this work, we prove that if $A$ is a string algebra then there are not three irreducible morphisms such that their composition is in $\Re^{6} \backslash  \Re^{7}$, if the composition of any two of them is not in $\Re^{3}$. Furthermore, for a string  algebras we prove that the minimum for three irreducible morphisms in such a condition is seven.

We also find families of algebras where their module category have $n$ irreducible  morphisms between indecomposable modules such that its composition belongs to $\Re^{n+4} \backslash \Re^{n+5}$ for $n \geq 3$, whenever the compositions of $n-1$ of them belong to $\Re^{n-1} \backslash \Re^{n}$.
It is still an open problem to see if the minimum $n$ is equal to $n+3$, for $n \geq 3$.
\vspace{.1in}

The paper is organized as follows. The first section is dedicated to recall some preliminaries definitions and results. In section 2, we prove  some general results concerning algebras which have cycles of length three. In section 3, we present some strings algebras that contains irreducible morphism from $M$ to $\tau M$, for $M$ an indecomposable $A$-module. In Section 4, we prove  some technical lemmas and apply the results of the previous sections to prove that if we consider a string algebra there are not three irreducible morphisms such that their composition is in $\Re^{6} \backslash \Re^{7}$ whenever the composition of any two of them behaves well. Finally, in the last section we give families of algebras having $n$ irreducible morphisms such that their composition belongs to $\Re^{n+4} \backslash \Re^{n+5}$, for $n\geq 3$ and such that the composition of $n-1$ of them behaves well.
\vspace{.1in}

\thanks{The authors thankfully acknowledge partial support from CONICET
and from Universidad Nacional de Mar del Plata, Argentina. The problem solved  in this article come up from the question of "Which algebras have an irreducible morphism from $M$ to $\tau M$" asked  by E. R. Alvares to the second named author, when she was visiting Universidade Federal do Paran\'a in Curitiba. This question is still an open problem.
The first author is a researcher from CONICET.}

\section{preliminaries}

\begin{sinob} A {\bf quiver} $Q$ is given by a set of vertices $Q_0$ and
a set of arrows $Q_1$, together with two
maps $s,e:Q_1\fle Q_0$. Given an arrow $\al\in Q_1$, we write $s(\al)$
the starting vertex of $\al$ and $e(\al)$
the ending vertex of $\al$.
For each arrow $\al\in Q_1$ we denote by $\al^{-1}$ its formal inverse, where $s(\al^{-1})=e(\al)$ and
$e(\al^{-1})=s(\al)$.

A {\bf walk} in $Q$  is a concatenation  $c_1\dots c_n$, with $n\geq1$, such that $c_i$ is either an
arrow or the inverse of an arrow, and $e(c_i)=s(c_{i+1})$.
We say that $c_1\dots c_n$ is a {\bf reduced walk} provided $c_{i}\neq c_{i+1}^{-1}$ for each $i$,
$1\leq i\leq n-1$.

If  $A$ is an algebra then
there exists a quiver $Q_A$, called the {\bf ordinary quiver of}  $A$, such that
$A$ is the quotient of the path algebra $kQ_A$ by an admissible ideal.
\end{sinob}

\begin{sinob}
Let $A$ be an  artin algebra. We denote by $\mbox{mod}\,A$ the category of finitely generated
left $A$-modules and  by $\mbox{ind}\,A$ the full subcategory of $\mbox{mod}\,A$ which consists of
one representative of each isomorphism class of indecomposable $A$-modules.

Let $X$ be a non-projective (non-injective) indecomposable $A$-module.
By $\al(X)$ ($\al'(X)$, respectively) we denote the number of indecomposable summands in the
middle term of an almost split sequence ending (starting, respectively) at $X$.
We say that $\al(\Gamma)\leq 2$ if $\al(X)$ and $\al'(X)$ are less than or equal to two, whenever they are defined.
\end{sinob}

\begin{sinob}
A morphism $f :X  \rightarrow  Y$, with $X,Y \in \mbox{ mod}\,A$,
is called {\bf irreducible} provided it does not split
and whenever $f = gh$, then either $h$ is a split monomorphism or $g$ is a
split epimorphism.

If $X,Y \in \mbox{mod}\,A$, the ideal $\Re(X,Y)$ is
the set of all the morphisms $f: X \rightarrow Y$ such that, for each
$M \in \mbox{ind}\,A$, each $h:M \rightarrow X$ and each $h^{\prime }:Y\rightarrow M$
the composition $h^{\prime }fh$ is not an isomorphism. For $n \geq 2$, the powers of $\Re(X,Y)$
are defined inductively. By $\Re^\infty(X,Y)$ we denote the intersection
of all powers $\Re^i(X,Y)$ of $\Re(X,Y)$, with $i \geq 1$. \vspace{.05in}

By \cite{B}, it is well-known that a morphism $f :X  \rightarrow  Y$, with $X,Y \in \mbox{ind}\,A$,
is irreducible if and only if $f \in \Re(X,Y)\setminus \Re^2(X,Y)$.\vspace{.05in}

We recall the definition of degree of an irreducible morphism given by S. Liu in \cite{L}.%\vspace{.05in}

Let $f:X\rightarrow Y$ be an irreducible morphism in
$\mbox{mod}\,A$, with $X$ or $Y$ indecomposable. The {\bf left degree} $d_l(f)$ of $f$ is infinite,
if for each integer $n\geq 1 $, each module $Z\in \mbox{ind}\,A$
and each morphism $g:Z  \rightarrow X$ with $g \in \Re^{n}(Z,X) \backslash \Re^{n+1}(Z,X)$ we
have that $fg \notin \Re^{n+2}(Z,Y)$. Otherwise, the left degree of
$f$ is the least natural number $m$ such that there is an $A$-module $Z$
and a morphism $g:Z  \rightarrow X$ with $g \in \Re^{m}(Z,X) \backslash \Re^{m+1}(Z,X)$ such that
$ fg \in \Re^{m+2}(Z,Y)$.

The {\bf right degree} $d_r(f)$ of an irreducible morphism $f$
is dually defined.
\vspace{.05in}

We denote by $\Gamma_A$ its Auslander-Reiten
quiver, by $\tau$ the Auslander-Reiten translation, and $\tau^{-1}$ its inverse.

Let $X\rightarrow Y$ be an arrow in $\Gamma_A$. Assume that $f: X\rightarrow Y$ is an
irreducible morphism in $\mbox{mod} \,A$. Following \cite{L}, we define the left degree of the arrow $X\rightarrow Y$ to be
$d_l(f)$, and the right degree of the arrow $X\rightarrow Y$ to be $d_r(f)$.

\begin{lem} \label{liu-cha} Let $A$ be a finite dimensional $k$-algebra. Any cycle of irreducible morphisms between indecomposable $A$-modules has both a monomorphism and an epimorphism.
\end{lem}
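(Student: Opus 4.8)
The plan is to reduce everything to two standard facts about irreducible morphisms between indecomposable modules and then run a single contradiction argument on the whole cycle at once, rather than tracking the behaviour arrow by arrow.

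First I would record the basic dichotomy: every irreducible morphism $f:X\to Y$ with $X,Y$ indecomposable is either a monomorphism or an epimorphism. To see this, factor $f=\iota\pi$ through its image, where $\pi:X\to\operatorname{Im}f$ is the canonical epimorphism and $\iota:\operatorname{Im}f\to Y$ the inclusion. Since $f$ is irreducible, either $\pi$ is a split monomorphism or $\iota$ is a split epimorphism. In the first case $\pi$ is simultaneously surjective and injective, hence an isomorphism, so $f$ is a monomorphism; in the second case $\iota$ is simultaneously injective and surjective, hence an isomorphism, so $f$ is an epimorphism.

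Next I would write the cycle as $X_0\xrightarrow{f_1}X_1\xrightarrow{f_2}\cdots\xrightarrow{f_n}X_n=X_0$ with each $f_i$ irreducible between indecomposables, and consider the composite $g=f_n\cdots f_1\in\operatorname{End}(X_0)$. Suppose, for contradiction, that none of the $f_i$ is a monomorphism; by the dichotomy each $f_i$ is then an epimorphism, so $g$ is a surjective endomorphism of the finite-dimensional module $X_0$, hence an isomorphism. But then $f_{n-1}\cdots f_1\,g^{-1}$ is a right inverse of $f_n$, so $f_n$ is a split epimorphism, contradicting its irreducibility. Symmetrically, if none of the $f_i$ is an epimorphism, then each is a monomorphism, $g$ is an injective (hence bijective) endomorphism of $X_0$, and $g^{-1}f_n\cdots f_2$ is a left inverse of $f_1$, making $f_1$ a split monomorphism — again a contradiction. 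Since each $f_i$ is a monomorphism or an epimorphism, the first contradiction shows the cycle contains an epimorphism and the second shows it contains a monomorphism, as claimed.

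The argument is short, and the only points needing care are the two facts it rests on: the image-factorization dichotomy above, and the fact that a surjective (respectively injective) endomorphism of a finite-length module is an isomorphism, which is where the finite dimensionality of $A$ enters. I do not expect a genuine obstacle; the one thing to get right is to apply the mono/epi dichotomy uniformly across the whole cycle so that the contradiction arises from the composite $g$ being forced to be an isomorphism, rather than attempting a local analysis of individual arrows.
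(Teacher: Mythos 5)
Your proof is correct, but it takes a genuinely different route from the paper. The paper's proof is a two-line citation argument: it invokes Liu's result that every oriented cycle in $\Gamma_A$ contains both an arrow of finite left degree and an arrow of finite right degree, and then the Chaio--Le Meur--Trepode correspondence between arrows of finite left (resp.\ right) degree and irreducible epimorphisms (resp.\ monomorphisms). You instead give a self-contained elementary argument: the image-factorization dichotomy (every irreducible morphism between indecomposables is a monomorphism or an epimorphism), plus the observation that a surjective (resp.\ injective) endomorphism of a finite-length module is an isomorphism, which forces the composite around the cycle to split off an irreducible morphism --- a contradiction. Both are sound; your version has the advantage of not depending on degree theory at all and of making the role of finite-dimensionality explicit, while the paper's version is shorter given that the degree machinery is already in use elsewhere in the article. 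One cosmetic slip: in your closing sentence the conclusions are swapped --- the assumption that no $f_i$ is a monomorphism leads to the contradiction that produces a \emph{monomorphism} in the cycle, and the assumption that no $f_i$ is an epimorphism produces an \emph{epimorphism}; the argument itself is fine, only the labels in the summary are interchanged.
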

\begin{proof}
By \cite[Lemma 2.2]{L}, we know that every oriented cycle in $\Gamma_A$  contains both an arrow of finite left
degree and an arrow of finite right degree.

By \cite[Corollary 3.2]{CLT}, the arrows of finite left
degree and the ones of finite right degree correspond to irreducible epimorphisms and monomorphisms, respectively. Then we get the result.
\end{proof}

An indecomposable $A$-module $M$ is \textbf{left (right) $\tau$-stable}
if for all positive integer $n$ the module $\tau ^n M$ ($\tau ^{-n} M$) is defined.
An indecomposable $A$-module $M$ is \textbf{$\tau$-stable} if it is both left and right $\tau$-stable.

In particular, if a $\tau$-stable module  $M$ satisfy that $\tau^m M\simeq M$
for some positive integer $m$, then we say that $M$ is \textbf{$\tau$-periodic}.
Moreover, $M$ is $\tau$-periodic of rank $m$ if $\tau^m M\simeq M$ and $\tau^k M \ncong M$ for ll $1\leq k<m$.

A path $M_1\fle M_2\fle \dots \fle M_n$ of irreducible morphisms
with $M_j\in \mbox{ind}\,A$ for $j=1,...,n$ and $n\geq 3$ is called { \bf sectional}
if for each $j=3,...,n$ we have that $M_{j-2}\not \simeq \tau M_j$.

A path $Y_0\rightarrow Y_1\rightarrow \dots \rightarrow Y_n$ in
$\Gamma _{A}$ is {\bf presectional} if for each $i$, with $1\leq
i\leq n-1,$ such that $Y_{i-1} \simeq \tau Y_{i+1}$ then there is an irreducible  morphism
$Y_{i-1}\oplus \tau Y_{i+1}\rightarrow Y_i$. Equivalently, if $\tau ^{-1}Y_{i-1}\simeq Y_{i+1},$ then there is an irreducible  morphism
$Y_i\rightarrow \tau ^{-1}Y_{i-1}\oplus Y_{i+1}$. Note that a sectional path  is also presectional.

A path $Y_0\rightarrow Y_1\rightarrow  \dots \rightarrow Y_n$ in
$\Gamma _{A}$ is {\bf left almost presectional} if $Y_0\rightarrow Y_1\rightarrow  \dots \rightarrow Y_{n-1}$
is presectional in $\Gamma _{A}$ and $Y_{n}\simeq \tau^{-1}Y_{n-2}$. Dually, we can define a right almost presectional path.

In \cite{Cha2}, the first named author gave a generalization of the result proven in \cite{AC}.
Moreover, as a consequence of such result  the author got Corollary \ref{tres}.

\begin{thm} \label{composn}
Let $A$ be an artin algebra and
assume that there is a configuration of almost split sequences as follows

$$\xymatrix  @R=0.3cm  @C=0.5cm {
	X_1\ar[dr]_{f_1}\ar@{.}[rr]&& \tau^{-1}X_1\ar[dr]&&&\\
&X_2\ar[dr]_{f_2}\ar[ur]\ar@{.}[rr]&& \tau^{-1}X_2\ar@{.}[dr]&&\\
&& X_3\ar[ur]\ar@{.}[dr]&&\tau^{-1}X_{n-2}\ar[dr]&\\
&&&X_{n-1}\ar[dr]_{f_{n-1}}\ar[ur]\ar@{.}[rr]&& X_{n+1}\\
&&&&X_{n}\ar[ur]_{f_n}&}$$
\vspace{.05in}

\noindent where  $f_1:X_1 \rightarrow X_2, \dots, f_n: X_n \rightarrow X_{n+1}$ are
irreducible morphisms between indecomposable $A$-modules  with $f_{1} \dots f_{n-1}$ in a
left almost pre-sectional path such that $f_{n-1} \dots f_1 \notin \Re^{n}$.
Let $h_i:X_i \rightarrow X_{i+1}$ be irreducible morphisms for $i=1, \dots, n$ such that $0 \neq h_n...h_1\in \Re^{n+1}$.
Then, $h_n...h_1\in \Re^{n+3}$.
\end{thm}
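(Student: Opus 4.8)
The plan is to isolate the last irreducible morphism $h_n$ and to treat the composite of the first $n-1$ morphisms as a single map feeding the left-degree machinery. Write $\phi=h_{n-1}\cdots h_1\colon X_1\to X_n$. Since the path $X_1\to\cdots\to X_n$ is presectional and the hypothesis $f_{n-1}\cdots f_1\notin\Re^{n}$ guarantees that a composition along this path stays in $\Re^{n-1}\setminus\Re^{n}$, I would first argue that the same holds for the chosen representatives, i.e. $\phi\in\Re^{n-1}\setminus\Re^{n}$: the initial presectional segment carries no jump. Granting this, the relation $0\neq h_n\phi\in\Re^{n+1}=\Re^{(n-1)+2}$ says exactly that $\phi$ witnesses $d_l(h_n)\leq n-1<\infty$. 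Hence $h_n$ is an irreducible morphism of finite left degree, and by \cite[Corollary 3.2]{CLT} it is an epimorphism.

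Next I would exploit the mesh that closes the configuration, namely $X_{n+1}\simeq\tau^{-1}X_{n-1}$. Writing the almost split sequence ending at $X_{n+1}$ as
$$0\to X_{n-1}\xrightarrow{\binom{\sigma_1}{\sigma_2}}X_n\oplus E\xrightarrow{(h_n,\;\rho)}X_{n+1}\to 0,$$
with $\sigma_1\colon X_{n-1}\to X_n$ and $\sigma_2\colon X_{n-1}\to E$ the components of the source map, exactness yields the mesh relation $h_n\sigma_1=-\rho\sigma_2$. Using S. Liu's description in \cite{L} of how a finite left degree is realized along a sectional path, together with the presectional path $X_1\to\cdots\to X_n$, I would pin down $d_l(h_n)$ and read off the exact power of $\Re$ in which the critical composites $h_n\sigma_1$ and $\rho\sigma_2$ live.

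The decisive point is to recover the two extra powers of the radical. Since $h_n\phi\in\Re^{n+1}$ lies in the radical, it factors through the minimal right almost split map $(h_n,\rho)$: there is $\binom{a}{b}\colon X_1\to X_n\oplus E$ with $h_na+\rho b=h_n\phi$, whence $h_n(\phi-a)=\rho b$. A naive count only reproduces $d_l(h_n)\leq n-1$ (as $\phi-a\in\Re^{n-1}\setminus\Re^{n}$ again), so the gain of two powers must come from feeding Liu's sectional realization of $d_l(h_n)$ back into the mesh relation, tracking the radical powers of $a$, $b$ and $\phi-a$ until the only consistent possibility forces $h_n\phi\in\Re^{n+3}$.

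The hard part is exactly this last bookkeeping, and in particular excluding a jump by exactly two, i.e. $h_n\phi\in\Re^{n+2}\setminus\Re^{n+3}$. This is where the argument of \cite{AC} invoked Hoshino's theorem \cite{H}: a jump falling short drives one to a module $M$ with $\tau M\simeq M$ (or of very small $\tau$-period), and Hoshino's classification—forcing a homogeneous stable tube or a local Nakayama algebra—is incompatible with a presectional path of length $n-1$ feeding into the mesh. I expect the main obstacle to be reconciling the length-$n$ presectional data with Liu's sectional realization of $d_l(h_n)$ while keeping the composite nonzero throughout the lifting; once that is settled, the remaining inequalities on radical powers are routine.
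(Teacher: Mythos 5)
This theorem is not proved in the paper at all: it is recalled from \cite{Cha2} as background, so there is no in-paper argument to measure your proposal against. Judged on its own terms, your outline sets up the standard machinery correctly (passing to $\phi=h_{n-1}\cdots h_1$, reading $0\neq h_n\phi\in\Re^{n+1}$ as $d_l(h_n)\le n-1$, factoring $h_n\phi$ through the minimal right almost split map ending at $X_{n+1}$ and using the mesh $X_{n+1}\simeq\tau^{-1}X_{n-1}$), but it stops exactly where the theorem actually lives. The whole content of the statement is the gain of two radical powers, i.e.\ excluding both $h_n\phi\in\Re^{n+1}\setminus\Re^{n+2}$ and $h_n\phi\in\Re^{n+2}\setminus\Re^{n+3}$, and for this you offer only the declaration that ``the only consistent possibility forces $h_n\phi\in\Re^{n+3}$'' after unspecified bookkeeping, plus the expectation that Hoshino's theorem will dispose of the remaining case as in \cite{AC}. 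That is a description of where a proof would have to go, not a proof: nothing in your write-up actually derives a contradiction from $h_n\phi\in\Re^{n+1}\setminus\Re^{n+3}$, and the reduction to a $\tau$-periodic module (the step where \cite{H} enters in \cite{AC}) is never constructed from the presectional configuration.

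There is also a secondary gap at the very first step. The hypothesis controls the specific morphisms $f_i$ of the configuration ($f_{n-1}\cdots f_1\notin\Re^{n}$), whereas the conclusion concerns arbitrary irreducible morphisms $h_i:X_i\to X_{i+1}$. Transferring ``no jump along the presectional segment'' from the $f_i$ to the $h_i$ requires knowing that each $h_i$ agrees with $\alpha_i f_i$ modulo $\Re^{2}$ for some scalar $\alpha_i$, which in turn needs the spaces $\mathrm{irr}(X_i,X_{i+1})$ to be one-dimensional; elsewhere in this paper that fact is itself extracted from the finiteness of a left degree via \cite[Lemma 2.3]{ChacotreII}, and even then one must check that the correction terms in $\Re^{2}$ do not push the composite into $\Re^{n}$. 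You assert $\phi\in\Re^{n-1}\setminus\Re^{n}$ without addressing either point.
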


\begin{coro} \label{tres} Let $A$ be an artin algebra and ${h_{i}}:X_{i}{\rightarrow }X_{i+1}$ be
irreducible morphisms with $X_i \in \mbox{ind}\,A$ for $i=1, 2 ,3$ such that $h_{3}h_2 h_{1} \in \Re^{4}(X_{1}, X_{4})$. Then, $h_{3}h_2 h_{1} \in \Re^{6}(X_{1}, X_{4})$.
\end{coro}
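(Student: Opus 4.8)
The plan is to reduce the statement to Theorem~\ref{composn} (and its dual) with $n=3$, after disposing of the degenerate configurations by means of the Alvares--Coelho bound. Since the conclusion is trivial when $h_3h_2h_1=0$, throughout I would assume $0\neq h_3h_2h_1\in\Re^4$.

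First I would treat the case where one of the two consecutive compositions already lies in $\Re^3$. Suppose $h_2h_1\in\Re^3$. As $h_3h_2h_1\neq 0$ we must have $h_2h_1\neq 0$, so $0\neq h_2h_1\in\Re^3$ and the result of Alvares and Coelho \cite{AC} upgrades this to $h_2h_1\in\Re^5$. Since $h_3$ is irreducible, $h_3\in\Re$, and therefore $h_3h_2h_1=h_3(h_2h_1)\in\Re\cdot\Re^5\subseteq\Re^6$. The case $h_3h_2\in\Re^3$ is symmetric: there $h_3h_2\in\Re^5$ by \cite{AC} and $h_3h_2h_1=(h_3h_2)h_1\in\Re^5\cdot\Re\subseteq\Re^6$.

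There remains the generic case $h_2h_1\notin\Re^3$ and $h_3h_2\notin\Re^3$. Here I would first observe that the path $X_1\to X_2\to X_3\to X_4$ cannot be sectional: along a sectional path of length three the composition of the irreducible morphisms is nonzero and lies in $\Re^3\setminus\Re^4$, which contradicts $0\neq h_3h_2h_1\in\Re^4$. By the definition of a sectional path, its failure forces $X_1\simeq\tau X_3$ or $X_2\simeq\tau X_4$. If $X_1\simeq\tau X_3$, then the length-two path $X_1\to X_2\to X_3$ is left almost presectional, since for a length-two path this amounts precisely to $X_3\simeq\tau^{-1}X_1$. Taking $f_i=h_i$, the hypotheses of Theorem~\ref{composn} with $n=3$ hold — the path $X_1\to X_2\to X_3$ is left almost presectional and $f_2f_1=h_2h_1\notin\Re^3$ — so the theorem yields $h_3h_2h_1\in\Re^6$. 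If instead $X_2\simeq\tau X_4$, then $X_2\to X_3\to X_4$ is right almost presectional and $h_3h_2\notin\Re^3$, and the dual of Theorem~\ref{composn} gives the same conclusion.

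The step I expect to be the main obstacle is the bookkeeping in the generic case: one must confirm that the configuration of almost split sequences required in Theorem~\ref{composn} is actually realized around $X_1,\dots,X_4$ — this is exactly where the relation $X_1\simeq\tau X_3$ (respectively $X_2\simeq\tau X_4$) is used to produce the necessary almost split sequence — and one must make sure that the reduction through the $f_i$ preserves the radical memberships being invoked. The auxiliary facts, that a non-sectional length-three path must satisfy one of the two $\tau$-relations and that the Alvares--Coelho bound composes correctly with one further irreducible morphism, are routine.
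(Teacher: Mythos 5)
First, a point of comparison: the paper does not actually prove Corollary \ref{tres}; it is imported from \cite{Cha2} as a consequence of Theorem \ref{composn}, so your attempt is being measured against the derivation the paper leaves implicit. Your treatment of the easy cases is correct and genuinely needed, since the corollary carries no hypothesis on the two-fold composites: the zero case is trivial, and when $0\neq h_2h_1\in\Re^3$ (or $0\neq h_3h_2\in\Re^3$) the Alvares--Coelho bound gives $\Re^5$, after which composing with one more irreducible morphism lands in $\Re^6$. That part stands.

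The gap is in the generic case. From $0\neq h_3h_2h_1\in\Re^4$ you correctly conclude that the path is not sectional, hence $X_1\simeq\tau X_3$ or $X_2\simeq\tau X_4$; but a single such isomorphism is far from the hypothesis of Theorem \ref{composn}. For $n=3$ that theorem presupposes the whole configuration of almost split sequences in its diagram: the sequence $0\to X_1\to X_2\to\tau^{-1}X_1\to 0$ (so $\alpha'(X_1)=1$), the sequence $0\to X_2\to X_3\oplus\tau^{-1}X_1\to X_4\to 0$ (so in particular $X_4\simeq\tau^{-1}X_2$), and the presectionality of $X_1\to X_2\to X_3$; none of these is produced by the bare relation $X_1\simeq\tau X_3$. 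Your reading of ``left almost presectional'' as a condition on the length-two subpath also conflicts with that diagram, which for $n=3$ forces $X_4\simeq\tau^{-1}X_2$, and it swaps which of your two cases should feed the theorem and which its dual. The missing ingredient is \cite[Theorem 2.2]{CCT}, which the paper itself invokes in the proof of Proposition \ref{cycle3} under exactly your generic hypotheses ($0\neq h_3h_2h_1\in\Re^4$, $h_2h_1\notin\Re^3$, $h_3h_2\notin\Re^3$): it delivers the configuration (\ref{tres-morf}) with $\alpha'(X)=1$ and $\alpha'(Y)=2$, or its dual, and that configuration is precisely what Theorem \ref{composn} (or its dual) consumes. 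Replacing your non-sectionality step by that citation closes the argument; as written, the step you dismiss as ``bookkeeping'' is the substantive one.
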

\end{sinob}

\begin{sinob}
Let $A$ be an algebra such that $A\cong kQ_A/I_A$. The algebra $A$ is called a {\bf string algebra} provided:

\begin{enumerate}
\item[(1)] Any vertex of $Q_A$ is the starting point of at most two arrows.
\item[(1')] Any vertex of $Q_A$ is the ending point of at most two arrows.
\item[(2)] Given an arrow $\beta$, there is at most one arrow $\gamma$ with $s(\beta)=e(\gamma)$ and $\gamma \beta\notin I_A.$
\item[(2')] Given an arrow $\gamma$, there is at most one arrow $\beta$ with $s(\beta)=e(\gamma)$ and $\gamma \beta \notin I_A.$
\item[(3)] The ideal $I_A$ is generated by a set of paths of $Q_A$.
\end{enumerate}
\vspace{.05in}

Let $A=kQ_A\diagup I_A$ be a string algebra. A {\bf string} in $Q_A$ is either a trivial path
$\varepsilon_v$ with $v\in Q_0$, or a reduced walk $C=c_1\dots c_n$ of length $n\geq 1$ such that
no sub-walk $c_{i}\dots c_{i+t}$ nor its inverse belongs to $I_A$.
We say that a string $C=c_1\dots c_n$ is {\bf direct (\bf inverse)}
provided all $c_i$ are arrows (inverse of arrows, respectively). We
consider the trivial walk $\eps_v$ a direct as well as an inverse string.

We say that a string $C$ has length $n$ if the number of arrows and inverse of arrows in its composition is $n$.

For each string $C=c_1\dots c_n$ in $Q_A$, an indecomposable string $A$-module $M(C)$ is defined.
Conversely, given $M$ an indecomposable string
$A$-module there exists a "unique" string $C$ such that $M=M(C)=M(C^{-1})$. The band modules are defined over strings $C$ such that all powers $C^n$, with $n\in \mathbb{N}$ are defined, see \cite{BR}. Every module over a string algebra
is defined either as a string module or as a band module, see \cite{BR}.
Moreover, if $A$ is a representation-finite string algebra then all the indecomposable $A$-modules are strings ones.

We say that a string $C$ {\bf starts in a deep (\bf on a peak)} provided there is no arrow $\beta$
such that $\beta^{-1}C$ ($ \beta C$, respectively) is a string. Dually, a string $C$ {\bf ends in a deep
(on a peak)} provided there is no arrow $\beta$
such that $ C\beta $ ($ C\beta^{-1}$, respectively) is a string.\vspace{.05in}

By \cite{BR} we know  that given a string algebra $A$ then
$\al(\Gamma)\leq 2$. Moreover, the authors also described all the almost split sequences of $\mbox{mod}\,A$ in terms of strings.

Consider  $I(u)$ to be the injective module corresponding to the vertex $u \in (Q_A)_0$. Then, $I(u)=M(D_1D_2)$
where $D_1$ is a direct string starting on a peak and $D_2$ is an inverse string ending on a peak.

Dually, if $P(u)$ is the projective corresponding to $u \in Q_0$ then $P(u)=M(C_1C_2)$ where $C_1$ is an inverse string and $C_2$ is a direct string. Moreover, $C_1C_2$ is a string that starts and ends in a deep.

For a detail account on these algebras see \cite{BR} and for general Auslander-Reiten theory we refer the reader to \cite{ASS} and \cite{ARS}.
\end{sinob}

% \section{On the composition of $n$ irreducible morphisms in $\Re^{n+3}\backslash \Re^{n+4}$}

\section{ General results}

Consider the following family of quivers $Q_{n}$

\begin{displaymath}
\xymatrix  @R=0.3cm  @C=0.6cm {
	1 \ar@(ul,dl)[]_{\al}\ar[r]^{\beta_1}&2\ar[r]^{\beta_2}&\dots \ar[r]^{\beta_n}& n+1&&
	}
\end{displaymath}
\vspace{.05in}

\noindent for $n\geq 2$ and  the ideal $I=<\al^2,\be_1\be_2>$. We denote the algebras $kQ_{n}/I$  by $(W(n),I).$
\vspace{.05in}

Fix an integer $n\geq 3$, and consider any algebra $A \simeq (W(n),I)$.  In such algebras  there is a composition of $n$ irreducible morphisms $h_i:X_i \rightarrow X_{i+1}$ for $i=1, \dots,  n$ between indecomposable $A$-modules such that $h_n \dots h_1\in \Re^{n+3}(X_1,X_{n+1})\backslash\Re^{n+4}(X_1,X_{n+1})$, with $h_n \dots h_2 \in \Re^{n}(X_2,X_{n+1})$.
\vspace{.1in}

We illustrate the above situation in the next example.

\begin{ej}\label{ej3.2.1MtauM}
\emph{Consider the algebra $A\simeq (W(3),I)$.
The Auslander-Reiten quiver $\Gamma_{A}$ is the following:}

\begin{displaymath}
\xymatrix  @R=0.3cm  @C=0.6cm {
	&&P_2 \ar[dr]&&&&&&\\
&P_3\ar@{.}[rr]\ar[ur]\ar[rd]&& I_3\ar[rd]_{f_1}&&&&&\\
P_4\ar[ur]\ar@{.}[rr]&&S_3\ar[ur]\ar@{.}[rr]&&S_2\ar@{.}[rr]\ar[dr]_{f_2}&&I_2\ar[rd]&&\\
&&&&&P_1\ar@{.}[rr]\ar[ur]_{f_3}\ar[rd]_{g_1}&& I_1\ar[rd]&\\
&&&&\tau M\ar@{.}[rr]\ar[ur]_{g_3}\ar[rd]&&M \ar@{.}[rr]\ar[ur]\ar[rd]_{g_2}&&S_1\\
&&&&&S_1\ar@{.}[rr]\ar[ur]&&\tau M\ar[ur]& }
\end{displaymath}

\noindent \emph{where we identify the modules which are the same.}

\emph{Consider the irreducible morphisms $f_1:I_3 \rightarrow S_2$, $f_2:S_2 \rightarrow P_1$ and $f_3:P_1 \rightarrow I_2$.}

\emph{We define  $h_2:S_2 \rightarrow P_1$ as follows $h_2=f_2+g_3g_2g_1f_2$, where $g_1: P_1 \rightarrow M$, $g_2: M \rightarrow \tau M$ and $g_3: \tau M \rightarrow P_1$ are irreducible morphisms. Then $h_2$  is irreducible. Indeed, otherwise,  $h_2 \in \Re^{2}(S_2, P_1)$. Therefore, $f_2 \in \Re^{2}(S_2, P_1)$ a contradiction since $f_2$ is an irreducible morphism between indecomposable modules. Note that the composition $f_3h_2 f_1\in \Re^6(P_2,I_2)\backslash \Re^7(P_2,I_2)$, but the composition  $f_3h_2\in \Re^{3}(I_3,I_2)$.}
\end{ej}

We are interested in finding three irreducible morphisms between indecomposable $A$-modules such that their composition
belongs to $\Re^{6}\backslash \Re^{7}$,
and moreover, with the property that the composition  of two of such morphisms does not belong to $\Re^3$.

In Section 4,  we shall prove that if $A$ is a string algebra then there are not irreducible morphisms $h_i$ for $i=1, 2, 3$ between indecomposable $A$-modules in  $\Re^6\backslash \Re^7$, with $h_2 h_1 \notin \Re^3$ and  $h_3 h_2 \notin \Re^3$.

Throughout this paper, we shall prove all our results for the composition of three irreducible morphisms $h_i$ for $i=1, 2$ and $3$, such that $d_l(h_3)=2$. We observe that, with similar arguments one can prove the results for the case where  $d_r(h_3)=2$.
\vspace{.1in}

Now, we show that if for some artin algebra $A$, there are morphisms as described above, then there must be a cycle of irreducible morphisms between indecomposable $A$-modules of length three.

\begin{prop}\label{cycle3}
Let $A$ be an artin algebra and let $f_1:X \rightarrow Y,\, f_2:Y \rightarrow W,$ and $f_3:W \rightarrow V$
be irreducible morphisms between   indecomposable $A$-modules such that $f_3f_2f_1\in \Re^6(X,V)\backslash \Re^7(X,V)$
with $f_2f_1\notin \Re^3(X,W)$ and  $f_3f_2\notin \Re^3(Y,V)$. Then, there exists a cycle
of length three.
\end{prop}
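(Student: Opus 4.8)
The plan is to exploit the two key structural facts already established. First, by Corollary \ref{tres}, whenever three irreducible morphisms between indecomposable modules compose into $\Re^4$, the composition automatically lands in $\Re^6$. Since our hypothesis only asserts $f_3f_2f_1 \in \Re^6\setminus\Re^7$, the interesting content is that the composition refuses to behave ``sectionally'': the strong hypotheses $f_2f_1\notin\Re^3$ and $f_3f_2\notin\Re^3$ say that neither adjacent pair composes into a higher power than the minimal $\Re^2$, so each pair $f_2f_1$ and $f_3f_2$ is itself a non-degenerate composition of two irreducibles. I would first invoke the degree machinery of Liu recalled in the preliminaries. The standing convention fixed just before the statement lets me assume $d_l(f_3)=2$. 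The idea is that $d_l(f_3)=2$ forces the existence of a module $Z$ and a morphism $g\in\Re^2(Z,W)\setminus\Re^3(Z,W)$ with $f_3 g\in\Re^4$, and the natural candidate for such a $g$ is built from the composition $f_2f_1$ (which lies in exactly $\Re^2\setminus\Re^3$ by the hypothesis $f_2f_1\notin\Re^3$).

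Next I would translate the finiteness of the left degree into a relation involving the Auslander--Reiten translate. The standard consequence of $d_l(f_3)=2$ (via Liu's degree theory and the structure of almost split sequences, using that $\alpha(\Gamma)\le 2$ is available for the algebras we ultimately care about, though here $A$ is a general artin algebra) is that $W$ and $V$ sit in an almost split sequence in which the relevant middle term connects back, through $\tau$, to the modules $X,Y$ appearing at the start of the path. Concretely, I expect to produce an irreducible morphism going ``backwards'', i.e.\ an irreducible morphism $V\to \tau^{-1}(\text{something})$ or equivalently an arrow from $W$ (or $V$) into a module already lying on the path $X\to Y\to W$. The combination of the two conditions $f_2f_1\notin\Re^3$ and $f_3f_2\notin\Re^3$ should pin down that $\tau^{-1}Y\simeq$ one of the modules on the path, or that there is an arrow $V\to X$, $V\to Y$, or $W\to X$ in $\Gamma_A$, which together with the arrows $X\to Y\to W\to V$ closes up into an oriented cycle.

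The concrete mechanism I would use is the characterization of arrows of finite degree: by the results quoted in Lemma \ref{liu-cha} (namely \cite[Corollary 3.2]{CLT}), an arrow of finite left degree is an irreducible epimorphism, and Liu's analysis of degree-two arrows shows that $d_l(f_3)=2$ forces a specific almost split sequence configuration in which $\tau^{-1}V$ or a summand is again reachable from the domain. I would then assemble the arrows $X\to Y\to W\to V$ of the given path together with the backward arrow extracted from the degree-two condition to obtain a closed walk of oriented arrows in $\Gamma_A$; counting the arrows shows the closed part has length three. Finally, Lemma \ref{liu-cha} guarantees any such cycle of irreducible morphisms contains both a monomorphism and an epimorphism, confirming it is a genuine cycle and not a degenerate identification.

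The main obstacle I anticipate is the middle step: extracting the precise backward arrow and showing it closes a cycle of length exactly three, rather than some other length. The hypothesis $f_3f_2f_1\in\Re^6\setminus\Re^7$ is sharp, and the subtlety is to use both non-vanishing conditions $f_2f_1\notin\Re^3$, $f_3f_2\notin\Re^3$ simultaneously to rule out longer cycles and to force the identification $\tau^{-1}X\simeq W$ (or the analogous relation) that makes the cycle have length three. Carefully tracking which composition lands in which power of $\Re$, and matching this against the possible almost split sequence shapes permitted by $\alpha(\Gamma)\le 2$, is where the real work lies; I would expect to argue by eliminating the sectional and presectional cases (where Corollary \ref{tres} and Theorem \ref{composn} would instead force $f_3f_2f_1\in\Re^7$), leaving only the non-sectional configuration that produces the length-three cycle.
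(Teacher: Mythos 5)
There is a genuine gap: your proposal never uses the hypothesis $f_3f_2f_1\notin\Re^7$ in a concrete way, and that hypothesis is the engine of the whole argument. The paper's proof starts by observing that $f_3f_2f_1\notin\Re^7(X,V)$ forces the existence of a path $\psi\colon X\to A_1\to A_2\to A_3\to A_4\to A_5\to V$ of exactly six irreducible morphisms between indecomposables with $\psi\notin\Re^7$. It then combines this with the configuration supplied by \cite[Theorem 2.2]{CCT} (which gives $h_3h_2h_1=0$, $\alpha'(X)=1$, $\alpha'(Y)=2$, and $\alpha(V)=2$, $\alpha(Z)=1$ on the dual side) to pin down the path step by step: $A_1\simeq Y$ because $\alpha'(X)=1$; $A_2\simeq W$ because $A_2\simeq Z$ would force $g_2g_1\in\Re^3$ (using $h_4h_1=0$ and the one-dimensionality of the irreducible morphism spaces from \cite[Lemma 2.3]{ChacotreII}) and hence $\psi\in\Re^7$; similarly $A_3\not\simeq V$; and finally $A_5\simeq Z$ or $A_5\simeq W$ because $\alpha(V)=2$. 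In either case a cycle of length three appears, namely $W\to A_3\to A_4\to W$ or $Y\to W\to A_3\to Y$. Note that in both cases the cycle runs through the auxiliary modules $A_3$ (and possibly $A_4$) of $\psi$, not through the given path $X\to Y\to W\to V$.

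This last point is where your plan would actually fail, not merely stall. You propose to extract a single ``backward arrow'' from $d_l(f_3)=2$ and close up the given arrows $X\to Y\to W\to V$ into a cycle. But an arrow $V\to X$ closes a cycle of length four, and the cycles that genuinely exist here are not obtained by adjoining one arrow to the given path at all; they live on the other realizing path $\psi$. Moreover, you give no mechanism for producing the backward arrow: finiteness of the left degree of $f_3$ tells you $f_3$ is an epimorphism and that some composition drops into a deeper radical power, but by itself it produces no arrow out of $V$ or $W$ back into $\{X,Y\}$. Two smaller issues: Corollary \ref{tres} plays no role in this proposition (it is motivation for the paper, not an ingredient of this proof), and $\alpha(\Gamma)\le2$ is a string-algebra fact you cannot assume for a general artin algebra --- the paper instead gets the needed local values $\alpha'(X)=1$, $\alpha'(Y)=2$, $\alpha(Z)=1$, $\alpha(V)=2$ from the specific almost split configuration of \cite[Theorem 2.2]{CCT}. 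The obstacle you flag at the end (``showing the cycle has length exactly three'') is indeed the entire content of the statement, and your outline does not contain the idea that resolves it.
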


\begin{proof}
Since $f_3f_2f_1\in \Re^6(X,V)\backslash \Re^7(X,V)$, then there is a path $\psi$ of irreducible morphisms

$$\xymatrix  @R=0.3cm  @C=0.6cm {
\psi:X\ar[r]^{g_1}&A_1\ar[r]^{g_2}&A_2\ar[r]^{g_3}&A_3\ar[r]^{g_4}&A_4\ar[r]^{g_5}&A_5\ar[r]^{g_6}&V
}$$

\noindent such that $\psi\notin \Re^7(X,V)$. Moreover, since $0  \neq f_3f_2f_1\in \Re^4(X,V)$, $f_2f_1\notin \Re^3(X,W)$ and  $f_3f_2\notin \Re^3(Y,V)$ then by \cite[Theorem 2.2]{CCT}
there is a configuration of almost split sequences as follows:

\begin{equation} \label{tres-morf}
\xymatrix  @R=0.3cm  @C=0.6cm {
	X\ar[dr]_{h_1}\ar@{.}[rr]&& Z\ar[dr]&&\\
&Y\ar[dr]_{h_2}\ar[ur]_{h_4}\ar@{.}[rr]&& V&&\\
&& W\ar[ur]_{h_3}&&&&}
\end{equation}

\noindent such that $h_3h_2h_1=0$, $\alpha'(X)=1$ and $\alpha'(Y)=2$ or its dual.

By \cite[Lemma 2.3]{ChacotreII} and the fact that $d_l({h_4})< \infty$, the dimension of the irreducible morphisms involved in $(\ref{tres-morf})$ is one.
Since $\alpha'(X)=1$ and $g_1:X\fle A_1$ is irreducible then $A_1\simeq Y$.

We claim that $A_2\simeq W$. In fact, if $A_2\simeq Z$ then $g_1= \alpha_1h_4+\mu_1$ and $g_2=\alpha_2h_4+\mu_2$ with $\alpha_1,\alpha_2\in k^{*}$ and $\mu_1,\mu_2\in \Re^2$. Since $h_4h_1=0$ we have that
$g_2g_1=\alpha_2h_4\mu_1+\alpha_1\mu_2h_1+\mu_2\mu_1\in\Re^3(X,Z)$.  Therefore, we get that $\psi\in \Re^7(X,V)$  a contradiction to our assumption. This establishes our claim.

With similar arguments as above we can prove that $A_3\not \simeq V$.

On the other hand, since  $\alpha(V)=2$ and there are irreducible morphisms $A_5 \rightarrow  V$, $Z \rightarrow V$ and $W \rightarrow V$, then
$A_5\simeq Z$ or $A_5\simeq W$.
If $A_5\simeq W$ is easy to see that there is a cycle $W\fle A_3\fle A_4\fle W$
 of length three.
Now, if $A_5\simeq Z$, since $\alpha(Z)=1$ then $A_4\simeq Y$. Hence, the path $\psi$ is as follows:

$$\xymatrix  @R=0.3cm  @C=0.6cm {
\psi:X\ar[r]^{g_1}&Y\ar[r]^{g_2}&W\ar[r]^{g_3}&A_3\ar[r]^{g_4}&Y\ar[r]^{g_5}&Z\ar[r]^{g_6}&V.
}$$

\noindent Then, there is a cycle $Y\fle W\fle A_3\fle Y$  in $\mbox{mod}\,A$ of length three.
\end{proof}

Next, we present a characterization for the existence of cycles of length three in $\mbox{mod}\,A$.

\begin{thm}\label{MtauM}
Let $A$ be an artin algebra. The following conditions are equivalent.
\benu
\item[(a)] There is a cycle in $\emph{mod}\,A$ which is a composition of irreducible morphisms between indecomposable $A$-modules of length three.
\item[(b)] There is an indecomposable not projective $A$-module $M$ and an irreducible morphism from $M$ to $\tau \,M$.
\enu
\end{thm}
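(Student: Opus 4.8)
```latex
The plan is to prove the two implications separately, exploiting the strong structural constraints imposed by having a three-cycle together with the tools already developed in the excerpt. The statement is an equivalence, so I would split it into $(a) \Rightarrow (b)$ and $(b) \Rightarrow (a)$.

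For the direction $(b) \Rightarrow (a)$, the construction is almost immediate. Suppose $M$ is indecomposable non-projective and there is an irreducible morphism $f : M \to \tau M$. Since $M$ is non-projective, there is an almost split sequence ending at $M$, so there is an irreducible morphism $g : \tau M \to E$ for some indecomposable summand $E$ of its middle term, and an irreducible morphism $h : E \to M$. Wait — more directly: since $M$ is non-projective, there is an almost split sequence $0 \to \tau M \to E \to M \to 0$, and composing an irreducible morphism $\tau M \to E'$ (for an indecomposable summand $E'$ of $E$) with the irreducible $E' \to M$ gives a path $\tau M \to E' \to M$. Concatenating with the given $f : M \to \tau M$ produces a cycle $M \to \tau M \to E' \to M$ of length three. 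The only thing to check is that such an indecomposable summand $E'$ exists, which follows because $M$ is indecomposable (hence $\tau M \neq 0$ and the middle term $E$ is a nonzero direct sum of indecomposables, each admitting irreducible morphisms to and from the end terms).

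For the harder direction $(a) \Rightarrow (b)$, suppose we have a cycle $X_1 \to X_2 \to X_3 \to X_1$ of irreducible morphisms between indecomposable modules. By Lemma~\ref{liu-cha} the cycle contains both an irreducible monomorphism and an irreducible epimorphism. The key idea is to analyze which arrow is the epimorphism and use the description of almost split sequences: if $u : X_i \to X_{i+1}$ is an irreducible epimorphism between indecomposables with $X_i$ non-projective, then $X_i$ sits in the middle of the almost split sequence ending at $X_{i+1}$, so $X_{i+1}$ is non-projective and $\tau X_{i+1}$ is a summand related to the start of the sequence. The plan is to relabel the cycle (using $\alpha(\Gamma) \le 2$ implicitly, though here the cycle itself forces the relevant $\alpha$-values) so that one can identify, by tracking the Auslander–Reiten translate around the cycle, a vertex $X_j$ together with the fact that $\tau X_j$ coincides with one of the other two vertices of the cycle, which then yields the desired irreducible morphism $X_j \to \tau X_j$.

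Concretely, I would argue as follows. Take the irreducible epimorphism in the cycle, say $X_3 \to X_1$; then $X_3$ is an indecomposable summand of the middle term of the almost split sequence $0 \to \tau X_1 \to E \to X_1 \to 0$. Thus there is an irreducible morphism $\tau X_1 \to X_3$, and dually the irreducible monomorphism in the cycle, say $X_1 \to X_2$, forces $X_1$ to be a summand of the middle term of the almost split sequence starting at $X_1$ or to relate $\tau X_2$ to the cycle. The main obstacle will be to rule out the degenerate configurations and to show that after going around the cycle the translate $\tau$ of one vertex lands back on an adjacent vertex, so that an existing arrow of $\Gamma_A$ becomes an irreducible morphism of the form $M \to \tau M$. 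I expect the cleanest route is to show that the composite of the three irreducible morphisms around the cycle is a nonzero endomorphism in the radical (a cycle must be in $\Re^3$ of $\mathrm{End}$), combine this with the monomorphism/epimorphism structure from Lemma~\ref{liu-cha}, and deduce via the almost split sequence relations that some $X_j \simeq \tau X_k$ for adjacent $j,k$, giving the irreducible morphism $X_k \to \tau X_k = X_j$ along the existing arrow of the cycle. Verifying that this identification is forced, rather than merely possible, is the crux of the argument.
```
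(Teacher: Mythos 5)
Your direction $(b)\Rightarrow(a)$ is correct and in fact shorter than the paper's argument: since $M$ is non-projective, the almost split sequence $0\to\tau M\to E\to M\to 0$ exists with $E\neq 0$, and any indecomposable summand $E'$ of $E$ supplies irreducible morphisms $\tau M\to E'\to M$, which together with the given $M\to\tau M$ close a cycle of length three. The paper instead splits into the cases $M$ non-injective (producing the cycle $\tau^{-1}M\to M\to\tau M\to\tau^{-1}M$) and $M$ injective (producing $M\to\tau M\to\tau^{2}M\to M$); your single construction avoids that case distinction and is perfectly valid.

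The direction $(a)\Rightarrow(b)$, however, has a genuine gap, which you yourself flag as "the crux". You never identify the tool that \emph{forces} some vertex of the cycle to be the Auslander--Reiten translate of an adjacent one. Lemma \ref{liu-cha} (every cycle contains an irreducible monomorphism and an irreducible epimorphism) does not do this: a cycle $X_1\to X_2\to X_3\to X_1$ with, say, two monomorphisms and one epimorphism is a priori compatible with all three conditions $X_{i-1}\not\simeq\tau X_{i+1}$ holding, and tracking almost split sequences around the cycle only tells you which modules are non-projective, not that a $\tau$-identification occurs. The missing ingredient is the theorem of Bautista and Smal{\o} \cite[Theorem 7]{BS} on the non-existence of sectional cycles: extending the cycle by one step to the length-four path $X_1\to X_2\to X_3\to X_1\to X_2$, that theorem says this path cannot be sectional, hence one of $X_1\simeq\tau X_3$, $X_2\simeq\tau X_1$, $X_3\simeq\tau X_2$ must hold; the corresponding arrow of the cycle is then an irreducible morphism $N\to\tau N$, and $N$ is automatically non-projective since $\tau N\neq 0$. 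This is exactly how the paper argues. Without invoking this result (or reproving it), the identification you need remains merely possible rather than forced, so the implication is not established.
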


\begin{proof}
$(a) \Rightarrow (b)$. By hypothesis there is a cycle of irreducible morphisms between indecomposable $A$-modules of length three. Let $M \rightarrow  M_1 \rightarrow M_2 \rightarrow M$ be such a cycle.
By \cite[Theorem 7]{BS},  any  path of the form $M \rightarrow  M_1 \rightarrow M_2 \rightarrow M \rightarrow M_1$
is not sectional. Therefore, one of the following conditions hold.
\benu
\item[(1)] $M\simeq \tau M_2$;
\item[(2)] $M_1\simeq \tau M$; or
\item[(3)] $M_2\simeq \tau M_1$.
\enu

In the former case, there is an irreducible morphism from $M_2$ to $\tau M_2$. In case (2) there is an irreducible morphism from  $M$ to $\tau M$. Finally, in the latter case, we have an irreducible morphism  $M_1$ to $\tau M_1$.

In conclusion, in all the cases, there is an indecomposable $A$-module which is not projective and an irreducible morphism from that module to the Auslander-Reiten translate of such a module, proving (b).

$(b) \Rightarrow (a)$. Let  $M$ be a module as in Statement (b). First, suppose that $M$ is not injective. Then $\tau^{-1}M$ is defined and there is an
irreducible morphism from $\tau M$ to $\tau^{-1}M$. Moreover, there is an irreducible morphism from $\tau^{-1}M$ to $M$. Hence there is a path of irreducible morphisms between indecomposable $A$-modules

$$\tau^{-1}M \fle M\fle \tau M \fle \tau^{-1}M$$

\noindent which is a cycle in $\mbox{mod}\,A$ of length three.

Secondly, if  $M$ is injective, then $\tau M$ is not projective. In fact, otherwise, we get to the contradiction that
the irreducible morphism from $M$ to $\tau M$ is both a monomorphism and an epimorphism. Hence, $\tau^2M$ is defined.

With similar arguments as before, there is an irreducible morphism from  $\tau^2M$ to $M$ and an
irreducible morphism  from $\tau M$ to $\tau^2M$. Therefore, there is a path of  irreducible morphisms

$$M \fle \tau M\fle \tau^{2}M \fle M$$

\noindent which is clearly a cycle of length three, getting (a).
\end{proof}

\begin{rem} \emph{We observe that for any positive integer $n$, condition (b) state below implies condition (a).}
\benu
\item[(a)] \emph { There is a cycle in $\mbox{mod}\,A$ which is a composition of irreducible morphisms between indecomposable $A$-modules of length $2n+1$.}
\item[(b)] \emph { There are indecomposable not projective $A$-modules $\tau^{i}M$ for $i=1, \dots, n-1$ and an irreducible morphism from $M$ to $\tau^{n} \,M$.}
\enu
\end{rem}

\begin{prop}
Let $A$ be an artin algebra.
Consider an indecomposable $A$-module  $M$ such that there is an irreducible morphism from $M$ to $\tau \,M$. If $M$ is
$\tau$-stable, then $M$ is $\tau$-periodic of rank three.
\end{prop}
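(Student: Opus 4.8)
The plan is to promote the single irreducible morphism $f\colon M\to\tau M$ to an oriented cycle of length three inside the $\tau$-orbit of $M$, and then to show that this orbit closes up after exactly three steps. Since $M$ is $\tau$-stable, every $\tau^{k}M$ with $k\in\mathbb Z$ is defined, indecomposable, non-projective and non-injective, and the Auslander--Reiten translation is a bijection on irreducible morphisms between such modules. Reconstructing the cycle as in the proof of Theorem~\ref{MtauM}, the irreducibility of $f$ makes $M$ a direct summand of the middle term $E$ of the almost split sequence $0\to\tau^{2}M\to E\to\tau M\to 0$; the component of its left-hand map with image in $M$ is an irreducible morphism $c\colon\tau^{2}M\to M$, while applying $\tau$ to $f$ yields an irreducible morphism $\tau f\colon\tau M\to\tau^{2}M$. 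This produces the oriented cycle
\[
M\xrightarrow{\ f\ }\tau M\xrightarrow{\ \tau f\ }\tau^{2}M\xrightarrow{\ c\ }M,
\]
and, by translating with $\tau^{k}$, an analogous cycle through $\tau^{k}M,\tau^{k+1}M,\tau^{k+2}M$ for every $k$.

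The key reduction is that it suffices to prove $\tau^{3}M\cong M$. Indeed, this already gives that $M$ is $\tau$-periodic with period dividing three. To see that the period is exactly three I would rule out the two smaller values: a period equal to one or two would force $\tau M\cong M$ or $\tau^{2}M\cong M$, turning $f$ or $c$ into an irreducible endomorphism of $M$. But an irreducible morphism between indecomposables is a monomorphism or an epimorphism, and a mono or epi endomorphism of a finite length indecomposable module is an isomorphism; since irreducible morphisms are never isomorphisms, no irreducible endomorphism exists. Hence once $\tau^{3}M\cong M$ is established, $M$ is $\tau$-periodic of rank three.

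Proving $\tau^{3}M\cong M$ --- equivalently that the two predecessors $\tau^{-1}M$ and $\tau^{2}M$ of $M$ coincide --- is the step I expect to be the main obstacle. Sectionality does not settle it: the path $M\to\tau M\to\tau^{2}M\to M\to\tau M$ traversing the cycle is automatically non-sectional at its repeated vertices, so \cite[Theorem~7]{BS} yields no constraint on $\tau^{3}M$. Instead I would argue through Liu's degree theory. Since $M$ is $\tau$-stable and lies on the oriented cycle above, \cite[Lemma~2.2]{L} provides an arrow of the cycle of finite left degree, which is an epimorphism by \cite{CLT}; I would compute this degree and show it equals two, and then use Liu's description of an irreducible morphism of finite left degree $d$ in terms of a sectional path of length $d$ ending at the relevant module. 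A left degree equal to two forces, after two translation steps along the orbit, a nonzero morphism that must factor through $M$, identifying $\tau^{2}M$ with $\tau^{-1}M$; this is precisely $\tau^{3}M\cong M$.

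Finally I would assemble the pieces: the cycle construction, the exclusion of periods one and two, and the degree computation giving $\tau^{3}M\cong M$, to conclude that $M$ is $\tau$-periodic of rank three. The delicate point throughout is the control of the Auslander--Reiten structure around the whole $\tau$-orbit, since the hypothesis does not a priori bound the number of predecessors of $M$; reducing to exactly two predecessors, so that the degree argument applies, is where most of the work will lie.
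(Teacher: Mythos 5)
Your opening moves are sound and agree with the paper: the length-three cycle $M\rightarrow \tau M\rightarrow \tau^{2}M\rightarrow M$ obtained from the almost split sequence ending at $\tau M$, and the exclusion of periods one and two via the non-existence of irreducible endomorphisms (the paper phrases this as the absence of loops in $\Gamma_A$), are both correct. The gap is exactly where you predicted it would be: the claim $\tau^{3}M\simeq M$ is never proved. Your proposed degree-theoretic substitute is not carried out --- you do not compute the finite left degree, do not justify that it equals two, and do not verify the hypotheses under which Liu's description of morphisms of finite degree applies to this orbit --- so as written the argument does not close. You also skip the preliminary step of showing that $M$ is $\tau$-periodic at all; the paper needs this (it argues that otherwise every almost split sequence in the $\tau$-orbit would have the two non-isomorphic middle summands $\tau^{k+1}M$ and $\tau^{k-2}M$, so by \cite[Theorem 2.3]{L} the orbit would contain no oriented cycle, contradicting Theorem \ref{MtauM}), and your plan only recovers periodicity as a consequence of the very statement you have not established.

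Moreover, your reason for abandoning the sectionality route is based on applying \cite[Theorem 7]{BS} to the wrong cycle. You are right that the short path $M\rightarrow \tau M\rightarrow \tau^{2}M\rightarrow M\rightarrow \tau M$ is automatically non-sectional at the triple $(\tau M,\tau^{2}M,M)$, since $\tau M\simeq \tau(M)$ trivially, so that cycle yields nothing. But the paper instead takes the cycle
$$\psi: M\rightarrow \tau M\rightarrow \tau^{2}M\rightarrow \dots \rightarrow \tau^{n-1}M\rightarrow \tau^{n}M\simeq M\rightarrow \tau M$$
going once around the whole orbit using only the translates $\tau^{k}f$, where $n>2$ is the period. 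There every internal triple $(\tau^{j-2}M,\tau^{j-1}M,\tau^{j}M)$ is sectional unless $\tau^{j-2}M\simeq \tau^{j+1}M$, i.e.\ unless $M\simeq\tau^{3}M$, and the wrap-around triples are sectional unless $\tau^{n-3}M\simeq M$, which by minimality of the period forces $n=3$. Hence \cite[Theorem 7]{BS} does settle the question once periodicity is in hand. To repair your proof you should first establish $\tau$-periodicity as above and then run the Bautista--Smal{\o} argument on the full orbit cycle rather than on the length-three one.
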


\begin{proof}
Consider $M$ an indecomposable $\tau$-stable  $A$-module such that there is an irreducible morphism from $M$ to $\tau M$. Assume that  $M$
is not $\tau$-periodic. Then for all integer $n$, the modules $\tau^n M$ are defined. Moreover, for all integers  $r$ and $s$ such that $r\neq s$, then
$\tau^s M \not \simeq  \tau^r M$.

Since there is an irreducible morphism from $M$ to $\tau M$, then there is an irreducible morphism from $\tau^{k}M$ to $\tau^{k+1}M$ for every integer $k$.
Furthermore, there is an irreducible morphism from $\tau^2M$ to $M$. Hence, for  all integer $k$
there is an irreducible morphism from $\tau^{k}M$ to $\tau^{k-2}M$.

Consider a full subquiver  $\Gamma$ of $\Gamma_{A}$
consisting of modules of the form $\tau^k M$ for  all integer $k$.
Observe  that all the modules in $\Gamma$ are neither projective nor injective. Then for all module $\tau^k M$ in $\Gamma$,
we have that  the morphism $\tau^kM \rightarrow \tau^{k+1}M \oplus \tau^{k-2}M$ is irreducible. Since $\tau^{k+1}M \not \simeq \tau^{k-2}M$, then all the almost split sequences in $\Gamma$ have at least two indecomposable middle terms.
By Theorem \cite[Teorema 2.3]{L} there are not oriented cycles in $\Gamma$, a contradiction to Theorem \ref{MtauM}. Then $M$ is $\tau$-periodic.

We claim that $M$ has $\tau$-period three. In fact, let $n$ be the $\tau$-period of $M$, that is, $M \simeq \tau^{n}M$ and $M\not \simeq \tau^kM$ for $1\leq k<n$.
Since there are irreducible morphisms from $M $ to $\tau M$ and from $\tau^2M$ to $M$
and there are not loops in $\Gamma_{A}$, then $n>2$.

On the other hand,
since there is an irreducible morphism  $M$ to $\tau M$
there is a cycle in $\Gamma_{A}$ of the form

$$\psi: M\fle \tau M\fle \tau^2M \fle \dots \fle \tau^{n-1} M \fle \tau^{n} M \simeq M.$$

\noindent By \cite[Theorem 7]{BS}, we know that the path $M\stackrel{\psi}\rightsquigarrow M\fle \tau M$ is not sectional. Then $\tau^{k}M\simeq \tau(\tau^{k+2}M)\simeq \tau^{k+3}M$ for some $k\leq n$.
In conclusion, for any $k$ satisfying the above condition, we have that
$M\simeq \tau^3M$, proving the result.
\end{proof}

\begin{rem} \emph{In case that  $M$ is an indecomposable $\tau$-stable $A$-module such that there is an irreducible morphism from $M$ to $\tau^{n} \,M$ for $n \geq 3$, then $M$ is $\tau$-periodic of rank $n+3$.}
\end{rem}

\section{On some string Algebras}

We shall present some string algebras such that their module category has an irreducible morphism from $M$ to $\tau\,M$, with $M$ an indecomposable module. This results shall be fundamental  to prove that if we consider a string algebra $A$ then there are not three irreducible morphisms between indecomposable $A$-modules in $\Re^{6} \backslash \Re^{7}$, when the composition of two of then behaves well.
\vspace{.1in}

We start given a characterization of the string algebras which have an irreducible morphism from $M$ to $\tau\,M$,  where $M$ is an indecomposable not $\tau$-stable module.

\begin{prop} \label{not}
Let $A=kQ_A/I_A$ be a string algebra. The following conditions are equivalent.
\begin{enumerate}
  \item There is an indecomposable not $\tau$-stable $A$-module $M$, and an irreducible morphism from $M$ to $\tau\,M$.
  \item The quiver $Q_A$ has one of the following full subquivers.
  \begin{enumerate}
\item $$\til{Q}_1:\,\, {\xymatrix  @R=0.3cm  @C=0.6cm {a \ar@(ul,dl)[]_{\al}\ar[r]^{\beta}&x}}$$

\noindent with $\alpha^n\in I_A$ for $n\geq 2$, and $\alpha\beta \in I_A$. If there is an arrow $\delta:x \rightarrow y$ in $Q_A$, then $\beta\delta\in I_A$. Moreover, there are no arrows in $Q_A$ going out or coming in from the vertices of $\til{Q}_1$; or

\item $$\til{Q}_2:\,\,{\xymatrix  @R=0.3cm  @C=0.6cm {x\ar[r]^{\beta}&a \ar@(ur,dr)[]^{\al}}}$$

\noindent with $\alpha^n\in I_A$ for $n\geq 2$, and $\beta\alpha \in I_A$. If there is an arrow $\delta:y\fle x$  in $Q_A$, then $\delta \beta\in I_A$. Moreover, there are no arrows in $Q_A$ going out or coming in from the vertices of $\til{Q}_2$; or

\item  $$\til{Q}_3:\,\,\xymatrix  @R=0.4cm  @C=0.6cm {
&\bullet \ar@{.}[r] & \bullet  \ar[rd]^{\gamma_m}& & 	\\
1 \ar[ru]^{\gamma_1} \ar[rrr]^{\alpha} & & &\bullet \ar[r]^{\beta} &a
}$$

\noindent with $m\geq 1$,  $\alpha \beta \in I_A$ and $\gamma_1\dots \gamma_m\beta\notin I_A$ . If there is an arrow $\delta:a \rightarrow y$ in $Q_A$, then  $\gamma_1 \dots \gamma_m \beta\delta\in I_A$. If  there is an arrow $\lambda:\bullet \rightarrow 1$ in $Q_A$, then  $\lambda\gamma_1\dots \gamma_m\in I_A$. Moreover, the vertex $a$ is not the end point of any other arrow; or

\item  $$\til{Q}_4:\,\,\xymatrix  @R=0.4cm  @C=0.6cm {
& &\bullet \ar@{.}[r] & \bullet  \ar[rd]^{\gamma_m}&  	\\
a \ar[r]^{\beta} & \bullet \ar[ru]^{\gamma_1} \ar[rrr]^{\alpha} & & &1 &
}$$

\noindent with $m\geq 1$,  $\beta \alpha \in I_A$ and $\beta\gamma_1\dots \gamma_m\notin I_A$. If there is an arrow $\delta: y \rightarrow a$ in $Q_A$, then $\delta\beta \gamma_1\dots \gamma_m\in I_A$. If  there is an arrow $\lambda:1\rightarrow \bullet$ in $Q_A$, then  $\gamma_1\dots\gamma_m\lambda\in I_A$. Moreover, the vertex $a$ is not the start point of any other arrow.
\end{enumerate}
\end{enumerate}
\end{prop}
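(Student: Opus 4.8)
The plan is to prove both implications by means of the combinatorial description of irreducible morphisms and of the Auslander--Reiten translation for string modules due to Butler and Ringel \cite{BR}. Over a string algebra every indecomposable relevant here is a string module $M(C)$, the (at most two) irreducible morphisms out of $M(C)$ are produced by the canonical hook-addition and cohook-deletion operations performed at the two ends of $C$, and $\tau M(C)=M(C')$ for an explicitly computable string $C'$. Hence the whole statement reduces to matching, at the level of strings, the target of a canonical irreducible morphism out of $M(C)$ with the string $C'$ that defines $\tau M(C)$. Throughout I would use the duality $A\leftrightarrow A^{\mathrm{op}}$, which interchanges $\til{Q}_1\leftrightarrow \til{Q}_2$ and $\til{Q}_3\leftrightarrow \til{Q}_4$ (and left $\tau$-stability with right $\tau$-stability), so that it suffices to treat one quiver from each pair.

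For the implication $(2)\Rightarrow(1)$, the verification direction, I would argue quiver by quiver. In each case the listed relations force the local configuration to sit immediately next to a projective or an injective; for instance, in $\til{Q}_1$ the relations $\al^n,\al\be\in I_A$ make the string beginning with the loop $\al$ extremal, so that the associated string module $M$ fails to be $\tau$-stable. I would write down the string $C$ defining $M$ explicitly, check directly that $M$ is not $\tau$-stable, compute $\tau M=M(C')$ from the Butler--Ringel formula, and observe that adding the hook coming from $\be$ (respectively from the walk $\ga_1\dots\ga_m\be$ in the cases $\til{Q}_3,\til{Q}_4$) to the appropriate end of $C$ produces exactly $C'$. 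The canonical irreducible morphism attached to that hook is then the desired map $M\to\tau M$. This is a routine, if case-laden, computation.

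For the converse $(1)\Rightarrow(2)$, let $M=M(C)$ be indecomposable, not $\tau$-stable, with an irreducible morphism $f\colon M\to\tau M$. Since $\tau M$ is defined, $M$ is not projective, and by hypothesis some module in the $\tau$-orbit of $M$ is projective (going backwards) or injective (going forwards); by duality I would fix one of these. The existence of $f$ means that $\tau M=M(C')$ is one of the canonical targets of $M(C)$, i.e. $C'$ arises from $C$ by a hook-addition at one end or a cohook-deletion at the other. Combining this with the Butler--Ringel expression for the string of $\tau M(C)$ yields a rigid equation between strings. The ``not $\tau$-stable'' hypothesis is precisely what makes this equation solvable in finitely many ways: it forces one end of $C$ to be a peak or a deep sitting at the projective or injective, which in turn forces the arrow producing the hook either to close up into a loop (giving $\til{Q}_1,\til{Q}_2$) or to satisfy a relation $\al\be\in I_A$ along a cycle $\ga_1\dots\ga_m\be$ (giving $\til{Q}_3,\til{Q}_4$). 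Reading off the arrows and relations that appear, together with the ``no extra arrows'' clauses forced by $\al(\Gamma)\le 2$ and by extremality of the string, reproduces exactly the four subquivers.

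The main obstacle is this converse direction, and specifically the bookkeeping of the Butler--Ringel hook/cohook combinatorics: one must control simultaneously which end of $C$ receives the hook, at which end the orbit meets the projective or the injective, and how the peak/deep conditions interact with conditions $(2),(2')$ in the definition of a string algebra. Keeping the cases disjoint and exhaustive --- and verifying that the ``no other arrows'' clauses are genuinely \emph{forced} and not merely sufficient --- is where the real work lies. The dichotomy between loops ($\til{Q}_1,\til{Q}_2$) and longer cycles ($\til{Q}_3,\til{Q}_4$) is exactly the shadow of whether the hook-producing walk has length one or greater.
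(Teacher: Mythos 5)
Your toolkit (the Butler--Ringel description of string modules, hooks and cohooks, and the explicit string for $\tau M(C)$) and your overall case-analysis strategy coincide with the paper's, and your plan for $(2)\Rightarrow(1)$ matches what the paper does: it simply exhibits, for each $\til{Q}_i$, the module of Corollary \ref{not2} and verifies the irreducible morphism by a direct string computation.

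The gap is in $(1)\Rightarrow(2)$, in the way you make the hypothesis ``not $\tau$-stable'' bite. You assert that it ``forces one end of $C$ to be a peak or a deep sitting at the projective or injective,'' where $C$ is the string of the given module $M$. That is not true for $M$ itself: non-$\tau$-stability only says that $\tau^{m}M$ is projective or injective for \emph{some} $m$, possibly far from $M$ in the orbit, and the string of $M$ need not be extremal at all (in the paper's own example in Section 3, \emph{every} module on the sectional path starting at $I_4$ admits an irreducible morphism to its translate, and almost none of them are injective). The missing step --- and the key organizational idea of the paper's proof --- is to first propagate the irreducible morphism along the $\tau$-orbit: since an irreducible morphism $X\to Y$ with $Y$ non-projective yields an irreducible morphism $\tau Y\to X$, the arrow $M\to\tau M$ yields arrows $\tau^{k}M\to\tau^{k+1}M$ throughout the orbit, hence in particular an irreducible morphism $I_a\to\tau I_a$ (or, dually, $\tau^{-1}P_a\to P_a$). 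Only then does one run the string analysis, and it is the injectivity of $I_a=M(\overline{D_1}\,\overline{D_2})$ that rigidly kills the added cohooks ($l(C)=l(C')=0$ in the paper's notation) and produces exactly the four subquivers. Posed at an arbitrary point of the orbit, your ``rigid equation between strings'' also admits the $\tau$-stable and $\tau$-periodic solutions classified in the proposition that follows in the paper (loop with $\alpha^2\in I_A$ but $\alpha\beta\notin I_A$), and your sketch provides no mechanism to exclude them; so as written the argument would not isolate $\til{Q}_1$--$\til{Q}_4$.
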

\begin{proof}
Let $M$ be an indecomposable $A$-module, not $\tau$-stable, and such that there is an irreducible morphism from $M$ to $\tau\,M$. Since  $M$ is not  $\tau$-stable then there is an integer $m$ such that $\tau^{m} M$ is either projective or injective.

Without loss of generality, we may assume that  $\tau^{m} M =I_a$ where $I_a$ is the injective corresponding to the vertex $a$ in $Q_A$. Moreover, with our notations, we have that there is an irreducible morphism from $I_a$ to $\tau I_a$.

Since $A$ is a string algebra, by \cite{BR} we know that $I_a=M(\overline{D_1}\;\overline{D_2})$ with $\overline{D_1}$ a direct string starting on a peak and $\overline{D_2}$ an inverse string ending on a peak.
Assume that $\tau I_a = M(D_1)$, where $D_1=\alpha_r\dots \alpha_2$ if $\overline{D_1}=\alpha_r\dots \alpha_1$.

Now,  depending on the string $D_1$, we shall analyze the possible almost split sequences starting in $M(D_1)$.

Firstly, assume that $D_1$ does not start on a peak and neither ends on a peak. Then the almost split sequences starting in $M(D_1)$ is as follows:

\[0\rightarrow M(\alpha_r\dots\alpha_2)\rightarrow M(C^{-1}\beta \alpha_r\dots \alpha_2)\oplus M(\alpha_r\dots\alpha_2\gamma^{-1}C')\rightarrow M(C^{-1}\beta\alpha_r\dots\alpha_2\gamma^{-1}C')\rightarrow 0.\]

\noindent Therefore $I_a=M(C^{-1}\beta\alpha_r\dots\alpha_2\gamma^{-1}C')$. Since  $I_a$ is injective then $l(C)=l(C')=0$. Moreover, since  $l(C)=0$, then $s(\beta)$ is not the start point of any other arrow in $Q_A$. Similarly, since  $l(C')=0$ then $s(\gamma)$ is not the start point of any other arrow in $Q_A$. Then we conclude that

\[I_a=M(\alpha_r\dots \alpha_1\overline{D_2})=M(\beta\alpha_r\dots \alpha_2\gamma^{-1})\]

\noindent with $\alpha_i=\beta$ for all $i=1, \dots, r$, $\beta^{r+1}\in I_A$, and  $\gamma\beta\in I_A$. Moreover, if there is an arrow  $\delta:\bullet\rightarrow s(\gamma)$ then $\delta\gamma\in I_A$. Hence in $Q_A$ there is a full subquiver of the following form:

\[\begin{array}{ccccc}
& &{\xymatrix  @R=0.3cm  @C=0.6cm {x\ar[r]^{\gamma}&a \ar@(ur,dr)[]^{\beta}}}&&
\end{array}
\]

\noindent with $\beta^{r+1}\in I_A$ for $,r\geq 1$ and $\gamma\be\in I_A$.

Secondly, suppose that $D_1$ starts and ends on a peak. In such a case $M(D_1)$ is injective, getting a contradiction to the fact that $\tau^{-1}M(D_1)=I_a$.

Now, suppose that $D_1$ does not start on a peak, but ends on a peak. Then the almost split sequence starting in $M(D_1)$ is as follows:

\[0\rightarrow M(\alpha_r\dots\alpha_2)\rightarrow M(C^{-1}\beta \alpha_r\dots \alpha_2)\oplus M(\alpha_r\dots\alpha_3)\rightarrow M(C^{-1}\beta\alpha_r\dots\alpha_3)\rightarrow 0.\]

\noindent Then $I_a=M(C^{-1}\beta\alpha_r\dots \alpha_3)$ with $l(C)=0$. Then there is a path of $r-1$ arrows, while $\overline{D_1}$ has $r$, a contradiction.

Finally, assume that $D_1$ starts on a peak and does not end on a peak. In this case, since $D_1$ is a direct string then the almost split sequence starting in $M(D_1)$ has only one  indecomposable middle term and it is as follows:

\[0\rightarrow M(\alpha_r\dots\alpha_2)\rightarrow  M(\alpha_r\dots\alpha_2\beta^{-1}C)\rightarrow M(C)\rightarrow 0\]

\noindent with $C$ a direct string  ending in a deep. Then $I_a=M(C)=M(\alpha_r\dots \alpha_1)$ is uniserial. Hence $Q_A$ has a subquiver of the form

 $$\xymatrix  @R=0.4cm  @C=0.6cm {
&\bullet \ar@{.}[r] & \bullet  \ar[rd]^{\alpha_2}& & 	\\
1 \ar[ru]^{\alpha_r} \ar[rrr]^{\beta} & & &\bullet \ar[r]^{\alpha_1} &a
}$$

\noindent with  $\beta\alpha_1\in I_A$. In case that there is an arrow $\lambda:\bullet \rightarrow 1$ then $\lambda\alpha_r\dots \alpha_2\in I_A$, because  $\alpha_r \dots \alpha_2$ starts on a peak.
Note that in this case,  $\alpha_i$ can be all trivial for $i=2, \dots, r$. In such a case, we have a subquiver as follows:

\[  {\xymatrix  @R=0.3cm  @C=0.6cm {\bullet \ar@(ul,dl)[]_{\beta}\ar[r]^{\alpha}&a}}\]

\noindent where $\be\al \in I_A$ (otherwise, $I_a\neq M(\alpha)$) and  $\beta^n\in I_A$ (in order to be a finite dimensional  algebra).

With a similar analysis as before and assuming that $\tau^{m} M$ is projective, we obtain the subquivers  (a) and (d).

For the converse, it is enough to show that for each configuration there is an indecomposable $A$-module $M$ and an irreducible morphism  from  $M$ to $\tau\, M$.
\end{proof}

As an immediate consequence of Proposition \ref{not}, we get the following corollary.

\begin{coro}\label{not2} With the notation introduced in Proposition \ref{not}, the following conditions hold.
\begin{enumerate}
  \item In $\til{Q}_1$ there are irreducible morphisms from $I_x$ to $\tau I_x$ and  from $\tau^{-1}P_a$ to $P_a$.
  \item In $\til{Q}_2$ there are irreducible morphisms from  $I_a$ to $\tau I_a$ and from $\tau^{-1}P_x$ to $P_x$.
  \item In  $\til{Q}_3$ there is an  irreducible morphism from $I_a$ to $\tau I_a$.
  \item In $\til{Q}_4$ there is an  irreducible morphism from   $\tau^{-1}P_a$ to $P_a$.
\end{enumerate}
\end{coro}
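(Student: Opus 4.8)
The plan is to harvest the morphisms directly from the case analysis in the proof of Proposition \ref{not}, and then to transport the remaining ones across the standard duality, so that the corollary reduces to bookkeeping on top of that proposition.

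First I would reread the injective branch of the proof of Proposition \ref{not}, where $\tau^m M = I_a$ and applying $\tau^m$ turns the morphism $M\to\tau M$ into an irreducible morphism $I_a\to\tau I_a$. The string analysis there produces three shapes: the ``first case'' gives $\til{Q}_2$, the non-degenerate ``last case'' gives $\til{Q}_3$, and the degenerate ``last case'' (all intermediate arrows $\alpha_i$ trivial) gives the $\til{Q}_1$-shaped quiver, but with the injective now seated at the non-loop vertex, namely $x$. Reading these off yields the three injective-type assertions of the corollary at once: $I_a\to\tau I_a$ in $\til{Q}_2$ and in $\til{Q}_3$, and $I_x\to\tau I_x$ in $\til{Q}_1$.

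Next I would obtain the projective-type assertions by applying the duality $D=\mathrm{Hom}_k(-,k)\colon \mathrm{mod}\,A\to\mathrm{mod}\,A^{op}$. This functor exchanges $I_v$ and $P_v$, intertwines $\tau$ and $\tau^{-1}$, and sends irreducible morphisms to irreducible morphisms with reversed direction; since $A^{op}$ is again a string algebra with the opposite quiver, and since $(\til{Q}_1)^{op}=\til{Q}_2$ while $(\til{Q}_3)^{op}=\til{Q}_4$, each morphism $I_v\to\tau I_v$ dualizes to a morphism $\tau^{-1}P_v\to P_v$ in the opposite configuration. Thus (3) yields (4), the $I_a$-part of (2) yields the $P_a$-part of (1), and the $I_x$-part of (1) yields the $P_x$-part of (2). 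This is also the structural reason that the loop configurations $\til{Q}_1,\til{Q}_2$ carry two morphisms while $\til{Q}_3,\til{Q}_4$ carry only one: the loop is precisely what makes the degenerate sub-case collapse onto a second (non-loop) vertex.

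The step that demands real care --- the main obstacle --- is the identification of $\tau I$ for the injective modules $I$ above. Because the minimal left almost split morphism out of an indecomposable injective $I$ is the projection $I\to I/\mathrm{soc}\,I$, the statement ``there is an irreducible morphism $I\to\tau I$'' is equivalent to $\tau I$ being a summand of $I/\mathrm{soc}\,I$, and this must be verified. I would check it configuration by configuration with the Butler--Ringel description of almost split sequences of string modules, computing the string of $\tau I$ and matching it to $I/\mathrm{soc}\,I$; for instance in $\til{Q}_1$ one has $I_x=M(\beta)$ with $I_x/\mathrm{soc}\,I_x=S_a$, and the Butler--Ringel sequence ending at $I_x$ gives $\tau I_x=S_a$, so the canonical projection $I_x\to S_a$ is exactly the asserted morphism. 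Once these identifications are in hand, the rest is the formal bookkeeping above together with the transport along $D$.
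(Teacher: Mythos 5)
Your proposal is correct and follows exactly the route the paper intends: the paper states this corollary with no proof at all, presenting it as an immediate consequence of Proposition \ref{not}, whose own proof only identifies the configurations in the forward direction and leaves the converse (``it is enough to show that for each configuration there is an irreducible morphism $M\to\tau M$'') unverified. Your reading-off of the injective cases from that case analysis, the transport to the projective cases via the duality $D$ (using $(\til{Q}_1)^{op}=\til{Q}_2$ and $(\til{Q}_3)^{op}=\til{Q}_4$), and the explicit check that $\tau I$ is a summand of $I/\mathrm{soc}\,I$ (e.g.\ $\tau I_x=S_a$ in $\til{Q}_1$) together supply precisely the verification the paper omits, and I find no gap in it.
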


\begin{ej}
\emph{Let $A$ be the algebra given by the presentation}

 $$\xymatrix  @R=0.4cm  @C=0.6cm {
&2  \ar[rd]^{\gamma_2}& & 	\\
1 \ar[ru]^{\gamma_1} \ar[rr]^{\alpha}  & &3 \ar[r]^{\beta} &4
}$$

\noindent \emph{with  $I=<\al\be>$. Let $\Gamma$ be a component of $\Gamma_{A}$  having the injective $I_4$. Then $\Gamma$ is as follows:}

\begin{displaymath}
\xymatrix  @R=0.1cm  @C=0.4cm {
\ar@{.}[r]& I_4\ar[rd]&&&&&&&\\
&& \tau I_4\ar[rd]\ar@{.}[rr] &&I_4\ar[rd] &&&&\\
\ar@{.}[r]& \tau^3I_4\ar[rd]\ar[ru]\ar@{.}[rr]&&\tau^2I_4\ar[rd]\ar[ru]\ar@{.}[rr]&& \tau I_4\ar[rd]\ar@{.}[rr]&& I_4\ar[rd]&\\
&& \tau^4I_4\ar[rd]\ar[ru]\ar@{.}[rr]&& \tau^3I_4\ar[rd]\ar[ru]\ar@{.}[rr]&&\tau^2I_4\ar[rd]\ar[ru]\ar@{.}[rr]&&\tau I_4\\
\ar@{.}[r] &\tau^6I_4\ar[ru]\ar@{.}[d]&&\tau^5I_4\ar[ru]\ar@{.}[d]&& \tau^4I_4\ar[ru]\ar@{.}[d]&& \tau^3I_4\ar@{.}[d]\ar[ru]&\\
&&&&&&&&}
\end{displaymath}

\noindent \emph{where we identify the modules in $\Gamma$ which are the same.}
\emph{Observe all the modules $M$ that belong to the sectional path starting in $I_4$  have the property that there is an irreducible morphism from $M$ to $\tau M$.}
\end{ej}

Now, we concentrate our attention in the algebras which have an irreducible morphism from $M$ to $\tau M$, where $M$ is an indecomposable $\tau$-stable module.

\begin{prop}
Let $A=kQ_A/I_A$ be a string algebra. The following conditions are equivalent.
\begin{enumerate}
  \item There is a $\tau$-stable indecomposable $A$-module $M$ with $\alpha'(M)=1$ and an irreducible morphism $M\rightarrow \tau M$.
  \item The quiver $Q_A$ contains one of the following full subquivers:
  \begin{enumerate}
\item[i)] \begin{displaymath}
\xymatrix  @R=0.3cm  @C=0.6cm {
	1 \ar@(ul,dl)[]_{\al}\ar[r]^{\beta}& 2}
\end{displaymath}

\noindent with $\al^2\in I_A$ and $\alpha\beta\notin I_A$. Moreover, there are no arrows coming in the vertex $1$, if there is an arrow $\lambda:2\rightarrow \bullet$ then $\beta\lambda\in I_A$ and $2$ is not the end point of any other arrow; or
\item[ii)] {\begin{displaymath}
\xymatrix  @R=0.3cm  @C=0.6cm {
	1 \ar@(ul,dl)[]_{\al}&2\ar[l]^{\beta}
	}
\end{displaymath}

\noindent  with  $\al^2\in I_A$ and $\beta\alpha\notin I_A$. Moreover, there are no arrows going out from the vertex  $1$, if there is an arrow $\lambda:\bullet\rightarrow 2$ then  $\lambda\beta\in I_A$ and $2$ is not the starting  point of any other arrow.}
\end{enumerate}
\end{enumerate}
\end{prop}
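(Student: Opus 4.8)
The plan is to prove both implications by analysing the string combinatorics of the almost split sequence starting at $M$, using the Butler--Ringel description of the almost split sequences of a string algebra.

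For $(1)\Rightarrow(2)$, I would first observe that $M$ cannot be a band module: by the proposition proved above, a $\tau$-stable module admitting an irreducible morphism to its translate is $\tau$-periodic of rank three, whereas a band module sits in a homogeneous tube and satisfies $\tau M\simeq M$. Hence $M=M(C)$ for a string $C$, and $M$ is neither projective nor injective. I would then pin down the almost split sequence: since $\alpha'(M)=1$ its middle term is indecomposable, and since there is an irreducible morphism $M\rightarrow\tau M$ that middle term must be $\tau M$; thus the sequence is $0\rightarrow M\rightarrow\tau M\rightarrow\tau^{-1}M\rightarrow 0$, and by the rank-three periodicity $\tau^{-1}M\simeq\tau^{2}M$.

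The core step is to feed the condition $\alpha'(M)=1$ into Butler--Ringel. Writing the middle of the sequence starting at $M(C)$ as $M(C_l)\oplus M(C_r)$, where $C_l$ and $C_r$ are produced by the canonical left and right operations (adding a hook, or deleting the maximal cohook), the hypothesis $\alpha'(M)=1$ means exactly one of these two strings is empty. Up to replacing $C$ by $C^{-1}$ I would assume the left one degenerates, forcing $C$ to be a direct string that is extremal on its left end. I would then impose that the surviving middle string $M(C_r)$ equal $\tau M$ and that iterating the operation three times return to $C$ (the rank-three periodicity). Tracking the hook/cohook operation through one full period, the only way the string can fold back onto itself after three steps --- while keeping $M$ away from projectives and injectives --- is for the relevant end of $C$ to sit at a vertex carrying a loop $\alpha$ with $\alpha^{2}\in I_A$ together with an adjacent arrow $\beta$ satisfying $\alpha\beta\notin I_A$; this yields subquiver i), and the inverse-string case yields ii). One finds in fact $M\simeq S_1$, the simple at the loop vertex. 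The remaining side conditions on $Q_A$ (no arrow ending at vertex $1$, the vertex $2$ not being the endpoint of a further arrow, and $\beta\lambda\in I_A$ whenever $\lambda\colon 2\to\bullet$) are then forced: any violating arrow would either split the middle term, contradicting $\alpha'(M)=1$, or prolong the orbit to a projective or an injective module, contradicting $\tau$-stability.

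For $(2)\Rightarrow(1)$ it suffices, as in Proposition \ref{not}, to exhibit the module and the morphism. I would take $M=S_1$, the simple at the loop vertex, compute via Butler--Ringel the strings of $\tau M$ and $\tau^{-1}M$ (obtaining $\tau M=M(\beta^{-1}\alpha)$ and $\tau^{-1}M=M(\beta)$ in case i), with the dual strings in case ii)), and verify that the almost split sequence $0\rightarrow S_1\rightarrow\tau S_1\rightarrow\tau^{-1}S_1\rightarrow 0$ has a single indecomposable middle term, so that $\alpha'(S_1)=1$, and that $\tau^{3}S_1\simeq S_1$; a dimension-vector count $(2,1)=(1,0)+(1,1)$ already confirms the shape of this sequence. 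The left-hand map is the required irreducible morphism $M\rightarrow\tau M$, and $\tau$-stability follows from the periodicity. I expect the main obstacle to be the core step of the forward direction: converting ``the unique middle term is $\tau M$, with period three'' into the precise loop configuration. This demands careful bookkeeping of the Butler--Ringel hook/cohook operation over a full period, together with a clean argument that no further arrows may touch the two vertices, and it is here that the detailed peak--deep and dimension analysis must be carried out.
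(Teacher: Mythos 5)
Your setup coincides with the paper's: from $\alpha'(M)=1$ and the irreducible morphism $M\to\tau M$ you correctly deduce that the almost split sequence starting at $M$ is $0\to M\to\tau M\to\tau^{-1}M\to 0$, so that $M=M(B_2^{-1})$ is given by a one-sided string and $\tau M=M(B_1^{-1}\beta_0B_2^{-1})$ for some arrow $\beta_0$; your exclusion of band modules and your verification of the converse in case i) via $\tau S_1=M(\beta^{-1}\alpha)$, $\tau^{-1}S_1=M(\beta)$ are both correct (the band-module point is one the paper leaves implicit). The gap is that everything after this setup is precisely the content of the paper's proof, and you leave it as a declared obstacle rather than carrying it out: ``tracking the hook/cohook operation through one full period, the only way the string can fold back onto itself is a loop'' is the conclusion, not an argument for it. Moreover, the paper does not iterate over a full $\tau$-period of length three. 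It computes only the \emph{next} almost split sequence, the one starting at $\tau M=M(B_1^{-1}\beta_0B_2^{-1})$ (which necessarily has two middle terms), and exploits the single identity $\tau^{-1}(\tau M)\simeq M$ to equate $B_2^{-1}$ with the string of the cokernel term of that sequence. The two cases --- the string of $\tau M$ starts on a peak, or it does not --- are exactly what produce subquivers i) and ii); in each case a comparison of string lengths forces the one-sided string to be trivial (respectively $B_1$ and $D$ trivial), whence $\beta_0$ is a loop with $\beta_0^2\in I_A$, and the side conditions at the vertices $1$ and $2$ fall out of the peak/deep requirements. Your three-step iteration would additionally require controlling the almost split sequences at $\tau M$ and $\tau^2M$, whose middle terms are decomposable, so it is strictly more bookkeeping than the one-step route; it is not wrong, but it is the harder version of the very step you have not supplied.

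One concrete inaccuracy: the claim that one finds $M\simeq S_1$ holds only in case i). In case ii) the paper obtains $B_2^{-1}=\lambda^{-1}$ with $\lambda=\beta\colon 2\to 1$, so $M=M(\beta)$ is two-dimensional and supported on both vertices; the simple at the loop vertex is not the module satisfying the hypotheses there. Consequently your dimension-vector verification of $(2)\Rightarrow(1)$, stated only for $S_1$, must be redone with the correct (dual) string in case ii).
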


\begin{proof}
Let $M$ be an indecomposable  $A$-module as in (1). Since $\alpha'(M)=1$ by  \cite{BR},
we get  that $M=M(\gamma_1^{-1}\dots \gamma_r^{-1})=M(B_2^{-1})$ and  $\tau M= N(\beta_0)=M(\delta_s^{-1}\dots \delta_1^{-1}\beta_0\gamma_1^{-1}\dots \gamma_r^{-1})=M(B_1^{-1}\beta_0 B_2^{-1})$. Observe that $\tau M$ can not be the starting of an almost split sequence with indecomposable middle term. Hence, the almost split sequence starting in $\tau M$ has two indecomposable middle terms.

Now, we shall built such a sequence. We know that the string $B_1^{-1}\beta B_2^{-1}$ ends on a peak. Then we analyze two cases:
\begin{enumerate}
\item[(a)] if  $B_1^{-1}\beta_0 B_2^{-1}$ starts on a peak; or
\item[(b)] if  $B_1^{-1}\beta_0 B_2^{-1}$ does not start on a peak.
\end{enumerate}

Assume that (a) holds, then there is no $\lambda\in Q_1$ such that $\lambda B_1^{-1}\beta_0 B_2^{-1}$ is a string. Since $\tau^{-1}M$ is not injective, then $s\geq1$. The almost split sequence starting in $\tau M$ is as follows:

\[0\rightarrow M(B)\rightarrow M(B_1^{-1})\oplus M(\delta_{s-1}^{-1}\dots \delta_1^{-1}\beta_0 B_2^{-1})\rightarrow M(\delta_{s-1}^{-1}\dots \delta_{1}^{-1})\rightarrow 0.\]

\noindent Therefore

\begin{equation}\label{eqn1}
M=M(B_2^{-1})=M(\delta_{s-1}^{-1}\dots \delta_{1}^{-1}).
\end{equation}

\noindent  Hence $\gamma_1^{-1}\dots \gamma_r^{-1}=\delta_{s-1}^{-1}\dots \delta_{1}^{-1}$. Then $r=s-1$ and $\gamma_{i}=\delta_{s-i}$. If $r\geq 1$
then there is a subquiver of the form:

$$\xymatrix  @R=0.4cm  @C=0.6cm {
&\bullet \ar@{.}[r] & \bullet  \ar[rd]^{\gamma_1=\delta_{s-1}}& & 	\\
1 \ar[ru]^{\gamma_r =\delta_{1}} \ar[rrr]^{\beta_0} & & &\bullet \ar[r]^{\delta_s} & \bullet
}$$

\noindent where $\beta_0\delta_s\in I_A$ since  $\gamma_1\delta_s \notin I_A$.
Observe that since the string $B_2^{-1}$ ends on a peak and $B_2^{-1}=\delta_{s-1}^{-1}\dots\delta_{1}^{-1}$ then $B_1^{-1}=\delta_s^{-1}\dots \delta_1^{-1}$ also ends on a peak.

On the other hand, since we assume (a) then $B_1^{-1}\beta_0 B_2^{-1}$ starts on a peak and therefore $B_1^{-1}$ starts on a peak. Then $M(B_1^{-1})$ is injective since $B_1^{-1}$ starts and ends on a peak, a contradiction to the fact that $M$ is $\tau$-stable and $M(B_1^{-1})=\tau\,M$.

Then $r=0$ and $B_2^{-1}=e(\beta_0)$ and $B_1^{-1}=\delta_1$. From (\ref{eqn1}) we get that $B_2^{-1}=e(\beta_0)=s(\beta_0)$, and therefore $\beta_0$ is a loop. Since $e(\beta_0)$ is not the end point of any other arrow, we have that if $\beta_0\delta_1\in I_A$ then $M(\delta_1)$ is injective getting a contradiction. Thus $\beta_0\delta_1\notin I_A$ and then $\beta_0^{2}\in I_A$. Then in $Q_A$ we have a subquiver of the form:

\[
\xymatrix  @R=0.3cm  @C=0.6cm {
	1 \ar@(ul,dl)[]_{\beta_0}\ar[r]^{\delta_1}&2}
\]

\noindent such that $\beta_0^{2}\in I_A$, $\beta_0\delta_1\notin I_A$, there are no arrows coming in or going out of the vertex $1$, if there is an arrow  $\lambda:e(\delta_1)\rightarrow \bullet$ then $\delta_1\lambda\in I_A$ and $e(\delta_1)$ is not the end point of any other arrow, since the string $B_1^{-1}\beta_0 B_2^{-1}=\delta_1^{-1}\beta_0$ starts on a peak.

Assume now, that $B_1^{-1}\beta_0 B_2^{-1}$ satisfies (b). That is, there is a an arrow $\lambda\in Q_1$ such that  $\lambda B_1^{-1}\beta_0 B_2^{-1}$. In this case, the almost split sequence starting in $B_1^{-1}\beta_0 B_2^{-1}$ is as follows:

\[0\rightarrow M(B_1^{-1}\beta_0 B_2^{-1})\rightarrow M(B_1^{-1})\oplus M(D^{-1}\lambda B_1^{-1}\beta_0 B_2^{-1})\rightarrow M(D^{-1}\lambda B_1^{-1})\rightarrow 0.\]

Then $M=M(B_2^{-1})=M(D^{-1}\lambda B_1^{-1})$. In this case, we obtain that $B_2^{-1}=B_1\lambda^{-1}D$. Thus, we deduce that $B_1$ and $D$ are trivial and  $B_2^{-1}=\lambda^{-1}$. Since $D$ is trivial, $s(\lambda)$ is not the starting point of any other arrow. Similarly, since $B_1$ is trivial then $s(\beta_0)$ is not the starting point of any other arrow. Furthermore, since the string $\lambda\beta_0 \lambda^{-1}$ is defined, $\beta_0$ is a loop and  $\beta_0^{2}\in I_A$ because $\beta_0\lambda\notin I_A$. Then we have a subquiver as follows:

\[\xymatrix  @R=0.3cm  @C=0.6cm {
	1 \ar@(ul,dl)[]_{\beta_0}&2\ar[l]^{\lambda}
	}\]

\noindent with $\beta_0^2\in I_A$, $\lambda\beta_0\notin I_A$, and where there are not arrows going out the vertex $1$, and if there is $\rho:\bullet\rightarrow 2$ then s $\rho\lambda\in I_A$ and  $2$ is not the starting point of any other arrow.
\end{proof}

\section{On the composition of three irreducible morphisms}

We shall prove several lemmas in order to prove the main result of this work.

\begin{lem}\label{W-A3-notinjective} Let $A$ be a string algebra. A configuration of almost split sequences as follows:

\begin{equation} \label{1}
\xymatrix  @R=0.3cm  @C=0.5cm {
X\ar[dr]_{f_1}\ar@{.}[rr] &&Z\ar[dr]&&&&\\
&Y\ar[dr]_{f_2}\ar@{.}[rr]\ar[ru]_{f}&&U\ar[dr]&&&\\
&&W\ar[dr]_{g_1}\ar@{.}[rr]\ar[ru]_{f_3}&&\tau^{-1} W\ar[dr]&&\\
&&&L\ar[dr]_{g_2}\ar@{.}[rr]\ar[ru]_{s_1}&&\tau^{-1}L\ar[dr]&\\
&&&&\tau L\ar[dr]_{g_3}\ar@{.}[rr]\ar[ru]_{s_2}&&L\\
&&&&&W\ar[ru]_{g_1}&\\}
\end{equation}
\vspace{.05in}

\noindent is a forbidden configuration in $\Gamma_A$.
\end{lem}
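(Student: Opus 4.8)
The plan is to argue by contradiction, assuming the configuration (\ref{1}) occurs in $\Gamma_A$ for a string algebra $A$. The first step is to read off from (\ref{1}) the irreducible morphism $g_2\colon L\to \tau L$ together with the oriented cycle $L\xrightarrow{g_2}\tau L\xrightarrow{g_3}W\xrightarrow{g_1}L$ of length three. Inspecting the two-term meshes, I would check that no module in the $\tau$-orbit of $L$ is projective or injective: the module $W$ occurs twice in the picture, so it receives irreducible morphisms from both $Y$ and $\tau L$ and maps onto both $U$ and $L$, giving $\alpha(W)=\alpha'(W)=2$; together with the presence of $\tau^{-1}W$ this shows $W$ is neither projective nor injective, and the same analysis at $L$ and $\tau L$ (whose almost split sequences on both sides have two middle terms) shows the whole orbit avoids projectives and injectives. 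Hence $L$ is $\tau$-stable, and the Proposition following Theorem \ref{MtauM} applies: $L$ is $\tau$-periodic of rank three, so $\tau^{3}L\cong L$, the orbit is $\{L,\tau L,\tau^{2}L\}$, and $\tau^{-1}L\cong\tau^{2}L$.

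Next I would pass to the combinatorics of strings. Writing $L=M(C)$, the condition $\alpha'(L)=2$ says the almost split sequence starting at $M(C)$ has two non-zero middle terms, and by applying $\tau$ around the orbit the same holds at $\tau L$ and $\tau^{2}L$. By the Butler--Ringel description, the two immediate successors of $M(C)$ in $\Gamma_A$ are the string modules obtained from $C$ by the two canonical \emph{one-sided} operations (adding a hook, or deleting a cohook, on the left and on the right respectively), so one of them agrees with $C$ on the left end and the other agrees with $C$ on the right end. On the other hand $\tau^{-1}M(C)=M({}_{h}C_{h})$ is obtained by performing both operations simultaneously, and dually $\tau L=\tau M(C)=M(D)$ is obtained from $C$ by reversing both one-sided operations, i.e. ${}_{h}D_{h}=C$. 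I would then compare, as reduced walks, the string $D$ of $\tau L$ with the two strings defining the successors of $L$, using that $g_2$ forces $\tau L$ to \emph{be} one of those two successors.

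The main obstacle, and the heart of the matter, is precisely this comparison. The claim I aim to establish is that $\tau L$ can never be an immediate successor of $L$: the two successors of $M(C)$ each differ from $C$ at only one of its two ends, whereas $D$ differs from $C$ at \emph{both} ends, since $\alpha'(\tau L)=2$ makes both one-sided operations relating $D$ and $C$ non-trivial; an isomorphism $M(D)\cong M(C_h)$ or $M(D)\cong M({}_hC)$ would therefore force one of these operations to be trivial, a contradiction. Conceptually this is the statement that a period-three orbit with two-term almost split sequences throughout sits inside a stable tube of rank three, and that no module of a stable tube admits an irreducible morphism to its own $\tau$-translate; but I expect the delicate work to be on the string side, where the borderline sub-cases in which one operation is a cohook deletion must be handled by bookkeeping of peaks, deeps and hook lengths, ruling out the single potentially coincidental identification. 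Once that is done the existence of $g_2\colon L\to\tau L$ is contradicted, so the configuration (\ref{1}) is forbidden.
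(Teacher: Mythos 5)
Your reduction to the bottom three meshes of the diagram discards exactly the information that forces the contradiction, and both pillars of the argument fail. First, the configuration only exhibits $\tau L$, $L$ and $\tau^{-1}L$; nothing in it prevents $\tau L$ from being projective or $\tau^{-1}L$ from being injective, so $L$ need not be $\tau$-stable and the proposition on $\tau$-periodicity of rank three cannot be invoked. (Bautista--Smal{\o} applied to $W\to L\to\tau L\to W\to L$ gives nothing new here: the non-sectionality condition $M_{j-2}\simeq\tau M_j$ is already satisfied tautologically by the pair $(\tau L,\,L)$.) Second, and decisively, your key combinatorial claim --- that $\tau L$ can never be an immediate successor of $L$ when the surrounding almost split sequences have two indecomposable middle terms --- is false for string algebras. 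The paper's own Example \ref{ej3.2.1MtauM} (the algebra $(W(3),I)$) contains an irreducible morphism $g_2\colon M\to\tau M$ with $\alpha'(M)=\alpha'(\tau M)=\alpha(M)=2$, which is precisely the local data you claim is impossible; this is also why Section 3 can exhibit whole families of string algebras admitting irreducible morphisms $M\to\tau M$. The ``differs at one end versus differs at both ends'' argument breaks down because a string module satisfies $M(D)\cong M(D^{-1})$ and because the Butler--Ringel operations (hook addition, cohook deletion) can both lengthen and shorten a string, so the two descriptions of $D$ are not incompatible. The appeal to stable tubes does not rescue this: even when the orbit $\{L,\tau L,\tau^2L\}$ is periodic of rank three, the component need not be a stable tube (in $(W(3),I)$ it contains $P_1$ and $I_1$).

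The actual obstruction lives in the part of the diagram you never use: the mesh $0\to X\to Y\to Z\to 0$ has a single indecomposable middle term, so $Y=N(\beta_0)=M(\delta_s^{-1}\cdots\delta_1^{-1}\beta_0\gamma_1^{-1}\cdots\gamma_r^{-1})$ is a string module whose string contains exactly one direct letter, and this rigidity propagates along $Y\to W\to L\to\tau L$. The paper's proof exploits exactly this: the string $D_1$ of $W$ has only one direct arrow, so when one lists the possible targets of the irreducible morphisms out of $\tau L$ and demands that one of them be $W$ (forced by the cycle $\tau L\to W\to L$), the lengths of the candidate strings never match $\ell(D_1)$, except in one borderline sub-case ($s=3$, $k=1$) which is excluded by showing the required relation cannot hold. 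Without feeding the one-middle-term mesh at $X$ into the string bookkeeping, no contradiction is reachable, since the purely local picture around $L\to\tau L$ is realizable.
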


\begin{proof} Since ${f}$ is an epimorphism then $g_1: W\rightarrow L$ so is. The $A$-modules
$X$ and $Z$ are the end points of an almost split sequence with indecomposable middle term $Y$. Then $Y=N(\beta_0)=M(\delta_s^{-1}\dots \delta_1^{-1}\beta_0\gamma_1^{-1}\dots\gamma_r^{-1})=M(C)$ with $C$ a string that  starts in a deep and ends in a peak. Moreover,  $X=M(\gamma_1^{-1}\dots\gamma_r^{-1})$ and $Z=M(\delta_s^{-1}\dots \delta_1^{-1})$.

By \cite{BR} and from (\ref{1}), we know that $f_2, g_1, g_2$ and $g_3$ are the irreducible morphisms obtained by analyzing the beginning of the string corresponding to the domain of such morphisms.

We start considering the case that $C$ starts on a peak.
Then $C$ starts and ends on a peak. Since $Y$ is not injective, then $s\geq 1$. Hence, $W=M(\delta_{s-1}^{-1}\dots \delta_1^{-1}\beta_0\gamma_1^{-1}\dots\gamma_r^{-1})$, $U=M(\delta_{s-1}^{-1}\dots \delta_1^{-1})$. Consider $D_1= \delta_{s-1}^{-1}\dots \delta_1^{-1}\beta_0\gamma_1^{-1}\dots\gamma_r^{-1}$. Then $W=M(D_1)=M(D_1^{-1})$.

Since  $g_1$ is an epimorphism, then the string corresponding to $W$ starts on a peak (otherwise, $g_1$ is a monomorphism). Therefore,  $\delta_{s-1}^{-1}\dots \delta_1^{-1}\beta_0\gamma_1^{-1}\dots\gamma_r^{-1}$ is a string that starts and ends on a peak. Now, since $W$ is not injective then  $s\geq 2$.
Thus, $L=M(\delta_{s-2}^{-1}\dots \delta_1^{-1}\beta_0\gamma_1^{-1}\dots\gamma_r^{-1})$ and  $\tau^{-1} W= M(\delta_{s-1}^{-1}\dots \delta_1^{-1})$.

Now, we analyze how is the string corresponding to $\tau L$. In order to do that, we may consider how is the beginning of the string corresponding to $L$.
Assume that $\delta_{s-2}^{-1}\dots \delta_1^{-1}\beta_0\gamma_1^{-1}\dots\gamma_r^{-1}$ does not start in a peak. Then $\tau L= M(\nu_t^{-1}\dots \nu_1^{-1}\beta_1\delta_{s-2}^{-1}\dots \delta_1^{-1}\beta_0\gamma_1^{-1}\dots\gamma_r^{-1})$ with $t\geq 1$ or
$\tau L=M(\beta_1\delta_{s-2}^{-1}\dots \delta_1^{-1}\beta_0\gamma_1^{-1}\dots\gamma_r^{-1}).$

Assume that $\tau L= M(\nu_t^{-1}\dots \nu_1^{-1}\beta_1\delta_{s-2}^{-1}\dots \delta_1^{-1}\beta_0\gamma_1^{-1}\dots\gamma_r^{-1})$ with $t\geq 1$. Suppose that $\nu_t^{-1}\dots \nu_1^{-1}\beta_1\delta_{s-2}^{-1}\dots \delta_1^{-1}\beta_0\gamma_1^{-1}\dots\gamma_r^{-1}$ starts on a peak.
Since $t\geq1$, then there exists an irreducible morphism  from $\tau L$ to $M(\nu_{t-1}^{-1}\dots \nu_1^{-1}\beta_1\delta_{s-2}^{-1}\dots \delta_1^{-1}\beta_0\gamma_1^{-1}\dots\gamma_r^{-1})$ and,  by construction, this module is $W$.

Since $D_1$ has only one direct arrow then  $D_1\neq \nu_{t-1}^{-1}\dots \nu_1^{-1}\beta_1\delta_{s-2}^{-1}\dots \delta_1^{-1}\beta_0\gamma_1^{-1}\dots\gamma_r^{-1}$.
If $D_1^{-1}= \nu_{t-1}^{-1}\dots \nu_1^{-1}\beta_1\delta_{s-2}^{-1}\dots \delta_1^{-1}\beta_0\gamma_1^{-1}\dots\gamma_r^{-1}$ and since $s\geq2$ then the arrows $\gamma_i$ are trivial. Thus $t\geq2$. Then $D_1^{-1}$ has length $s$, while $\nu_{t-1}^{-1}\dots \nu_1^{-1}\beta_1\delta_{s-2}^{-1}\dots \delta_1^{-1}\beta_0\gamma_1^{-1}\dots\gamma_r^{-1}$ has at least length $s+1$, a contradiction.
Then $\nu_t^{-1}\dots \nu_1^{-1}\beta_1\delta_{s-2}^{-1}\dots \delta_1^{-1}\beta_0\gamma_1^{-1}\dots\gamma_r^{-1}$ does not start on a peak.
 Therefore there is an irreducible morphism as follows:
$$\tau L \rightarrow M(\lambda_k^{-1}\dots \lambda_1^{-1}\beta_2\nu_t^{-1}\dots \nu_1^{-1}\beta_1\delta_{s-2}^{-1}\dots \delta_1^{-1}\beta_0\gamma_1^{-1}\dots\gamma_r^{-1})$$
\noindent  where the length of such a string is different from the length of $D_1$ (and $D_1^{-1}$), proving that this case is not possible.

Now, consider that $\tau L=M(\beta_1\delta_{s-2}^{-1}\dots \delta_1^{-1}\beta_0\gamma_1^{-1}\dots\gamma_r^{-1})$. If $\beta_1\delta_{s-2}^{-1}\dots \delta_1^{-1}\beta_0\gamma_1^{-1}\dots\gamma_r^{-1}$ does not start on a peak, then we can add an arrow and the string should have at least length $r+s+1$, while the string corresponding to $W$ has length $r+s$, a contradiction. If $\beta_1\delta_{s-2}^{-1}\dots \delta_1^{-1}\beta_0\gamma_1^{1}\dots\gamma_r^{-1}$ starts on a peak, since $\tau L$ is not injective then $s\geq3$. Therefore, there is an irreducible morphism from $\tau L$ to $M(\delta_{s-3}^{-1}\dots \delta_1^{-1}\beta_0\gamma_1^{-1}\dots\gamma_r^{-1})$  where the length of such a string and the string corresponding to $W$ are different,  proving that this case is not possible.

Assume that the string corresponding to $L$ starts on a peak. Then $g_2$ is an epimorphism and the string $\delta_{s-2}^{-1}\dots \delta_1^{-1}\beta_0\gamma_1^{-1}\dots\gamma_r^{-1}$ starts and ends in a peak.  Hence, $s\geq 3$, otherwise $L$ is injective. Then $\tau L=M(\delta_{s-3}^{-1}\dots \delta_1^{-1}\beta_0\gamma_1^{-1}\dots\gamma_r^{-1})$.

By Lemma \ref{liu-cha}, since $g_1$ and $g_2$ are epimorphisms then $g_3$ is a monomorphism.
Since the existence of $g_3$ is due to the fact of how is the beginning of the string corresponding to $\tau L$, then $\delta_{s-3}^{-1}\dots \delta_1^{-1}\beta_0\gamma_1^{-1}\dots\gamma_r^{-1}$ does not start in a peak. Thus, there is an irreducible morphism from
$\tau L$ to  $M(\lambda_k^{-1}\dots \lambda_1^{-1}\beta_1\delta_{s-3}^{-1}\dots \delta_1^{-1}\beta_0\gamma_1^{-1}\dots\gamma_r^{-1})$.
It is clear that this string is different from $D_1$, since $D_1$ has only one arrow. Then we have that $D_1^{-1}=\lambda_k^{-1}\dots \lambda_1^{-1}\beta_1\delta_{s-3}^{-1}\dots \delta_1^{-1}\beta_0\gamma_1^{-1}\dots\gamma_r^{-1}$. Since $s\geq3$, this implies that the arrows $\gamma_i$ are trivial. If $s\geq4$ then $D_1^{-1}$ has at least three arrows, a contradiction, because the obtained string has two arrows. Therefore $s=3$ and, in consequence, $k=1$. Then  $\lambda_1^{-1}\beta_1\beta_0=\beta_0^{-1}\delta_1\delta_2$, and we get that $\beta_0=\delta_2$ and $\beta_0^{-1}=\lambda_1^{-1}$ is a string that starts in a deep. Note that the string $\delta_3^{-1}\delta_2^{-1}=\delta_3^{-1}\beta_0^{-1}$ is defined, hence we get a contradiction. In conclusion,  $C$ can not start on a peak.

In case that  $C$ does not start on a peak, with similar arguments as above, we can conclude that there is not possible to have a configuration of almost split sequences as in $(\ref{1})$, whenever $A$ is a string algebra.
\end{proof}

\begin{lem} \label{W-injectivo 2}
A configuration of almost split sequences as follows:

\[
\xymatrix  @R=0.3cm  @C=0.6cm {
X\ar[dr]\ar@{.}[rr]&&Z\ar[dr]&\\
&Y\ar[dr]\ar@{.}[rr]\ar[ru]&&V\\
&&W|\ar[dr]\ar[ru]&\\
&&&L}
\]

\noindent with  $W$ an indecomposable injective $A$-module, $L\not\simeq \tau W$ an indecomposable  injective $A$-module such that there is an irreducible morphism from $L$ to $\tau L$ is not a possible configuration in $\Gamma_A$.
\end{lem}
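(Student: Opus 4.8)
The plan is to run the same kind of Butler--Ringel string combinatorics used in Lemma \ref{W-A3-notinjective}, but now exploiting that \emph{both} $W$ and $L$ are injective. First I would record the shape forced on the modules of the diagram. The almost split sequence $0\to X\to Y\to Z\to 0$ has indecomposable middle term, so by \cite{BR} the module $Y$ is a string module $Y=M(C)$ with $C$ a string that starts in a deep and ends on a peak (exactly as in the previous lemma), and $W$ is one of the two summands of the middle term of the almost split sequence starting at $Y$, whose end term is $V=\tau^{-1}Y$. Since $W$ is injective, $W=M(D)$ with $D$ a string that starts and ends on a peak, and its source map $W\to V\oplus L$ is obtained by removing a cohook from each of the two ends of $D$; thus $V$ and $L$ are \emph{distinct} summands of $W/\operatorname{soc}W$, and on the end of $D$ left unchanged by the relevant removal the string of $L$ still lies on a peak.

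Next I would run a mesh argument to dispose of the generic situation. Because there is an irreducible morphism $L\to\tau L$, the module $L$ is not projective, so the almost split sequence $0\to\tau L\to E_L\to L\to 0$ exists; as $W\to L$ is irreducible, $W$ is a summand of $E_L$, whence the mesh yields an irreducible morphism $\tau L\to W$. Therefore $\tau L$ lies among the modules admitting an irreducible morphism to $W$. Since $Y\to W$ is also such a morphism and $\alpha(W)\le 2$ for a string algebra (so there are at most two such modules), either $Y$ is the only one, in which case $\tau L\simeq Y$ forces $L\simeq\tau^{-1}Y\simeq V$, contradicting $V\not\simeq L$; or there is a second module $Y'$ with $\tau L\simeq Y'$, which is the case that requires real work.

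For the remaining case I would combine the injectivity of $L$ with the classification of Section~3. As $L$ is injective it is not $\tau$-stable, and it carries an irreducible morphism $L\to\tau L$; hence by Proposition \ref{not} and Corollary \ref{not2} the quiver $Q_A$ must contain one of the subquivers $\til{Q}_1,\til{Q}_2,\til{Q}_3$ with $L$ as the distinguished injective, and this pins down the string of $\tau L$ and the whole mesh around $L$ explicitly. Cross-referencing this explicit $\tau L$ with the string of $W$ obtained in the first step, and comparing peak/deep conditions and string lengths exactly as in the proof of Lemma \ref{W-A3-notinjective} (using repeatedly that $L$ injective forces the string of $L$ to start and end on a peak), I expect the two descriptions of the neighbourhood of $L$ to be compatible only when $L\simeq\tau W$, which contradicts the hypothesis. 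Lemma \ref{liu-cha} is used along the way: the three-cycle $L\to\tau L\to\tau^{2}L\to L$ provided by Theorem \ref{MtauM} for the injective $L$ must contain both a monomorphism and an epimorphism, which fixes the epi/mono type of the irreducible maps and rules out the otherwise admissible string lengths.

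The main obstacle is precisely this last combinatorial matching. One must determine exactly when removing a cohook from an injective string again produces a string lying on a peak (so that the resulting module $L$ is itself injective), and then reconcile that rigid constraint with the shape of $\tau L$ dictated by the $L\to\tau L$ hypothesis via Corollary \ref{not2}. Keeping careful track of which end of each string sits on a peak and of the exact lengths of the strings involved---so as to force the contradiction onto $L\simeq\tau W$---is the delicate part; the rest is bookkeeping entirely analogous to the previous lemma.
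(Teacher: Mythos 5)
Your plan diverges from the paper's proof at its very foundation. The paper's own argument opens with ``Since $L$ is not injective then $\tau^{-1}L$ is defined'': the printed hypothesis that $L$ is injective is a misprint (the case where both $W$ and $L$ are injective is exactly the content of the next lemma, Lemma \ref{lema W no iny 2}, which would otherwise make this one redundant). Your entire strategy leans on $L$ being injective --- you invoke Proposition \ref{not} and Corollary \ref{not2} because ``$L$ is injective it is not $\tau$-stable,'' and you repeatedly use that the string of $L$ starts and ends on a peak. None of that is available in the situation the paper actually treats, so the argument as designed does not address the intended statement. The paper instead proceeds by (i) showing $\alpha'(\tau L)=2$ from the mesh $\tau L\to W$ and $\tau L\to\tau^{-1}L$ together with $L\not\simeq\tau W$; (ii) showing $\alpha'(L)=1$ by exhibiting a configuration in which an irreducible morphism would have to be simultaneously an epimorphism (forced by the injective $W$) and a monomorphism (forced by a projective $P$); and (iii) writing $W=M(\beta_0\gamma_1^{-1}\cdots\gamma_r^{-1})$ (or $M(\beta_1\beta_0\gamma_1^{-1}\cdots\gamma_r^{-1})$), $L=M(\gamma_2^{-1}\cdots\gamma_r^{-1})$ and $\tau L=M(\lambda_k^{-1}\cdots\lambda_1^{-1}\beta_1\gamma_2^{-1}\cdots\gamma_r^{-1})$, and deriving a length contradiction from the requirement that the codomain of the irreducible morphism out of $\tau L$ be the injective $W$.

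Even taken on its own terms, the proposal has a genuine gap at the decisive point: the ``combinatorial matching'' that is supposed to show the two descriptions of the neighbourhood of $L$ are compatible only when $L\simeq\tau W$ is announced (``I expect\dots'') but never carried out, and this is precisely where all the content of the lemma lies --- the rest (mesh arguments, $\alpha(W)\le 2$) is routine. Two smaller problems: your first dichotomy discards the case $\tau L\simeq Y$ by ``contradicting $V\not\simeq L$,'' but $V\not\simeq L$ is not among the hypotheses and must itself be argued; and Proposition \ref{not} only guarantees the existence of \emph{some} non-$\tau$-stable module with an irreducible morphism to its translate realizing one of the subquivers $\til{Q}_1$--$\til{Q}_4$, so identifying $L$ with the distinguished injective vertex of such a subquiver requires justification that is not supplied.
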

\begin{proof}
Since $L$ is not injective then $\tau^{-1}L$ is defined. The existence of an irreducible morphism from $L$ to $\tau L$, implies the existence of an  irreducible morphism from $\tau L$ to $\tau^{-1}L$. Moreover, there is an  irreducible morphism from $\tau L$ to $W$. Since $L\not\simeq \tau W$ then  $\alpha'(\tau L)=2$.

Assume that $\alpha'(L)=2$. Then there is a configuration of almost split sequences as follows:

\begin{equation} \label{injectiveW}
\xymatrix  @R=0.3cm  @C=0.6cm {
 X\ar[dr] && Z \ar[dr]&&  &\\
	& Y\ar[dr]\ar[ur]\ar@{.}[rr]&&V&& \\
&& W|\ar[dr]^{g}\ar[ur]&&|P\ar[dr]&\\
& \tau L\ar[dr]\ar[ur]\ar@{.}[rr]&&L\ar[dr]\ar[ur]^{f}\ar@{.}[rr]&& \tau^{-1}L\\
L\ar[ur]\ar[dr]_{f}\ar@{.}[rr]&& \tau^{-1}L\ar[ur]&&\tau L\ar[ur]&\\
& |P \ar[ur]&&  && }
\end{equation}

\noindent Since $W$ is injective, then $g$ is an epimorphism. By $(\ref{injectiveW})$, we know that $f$ is an epimorphism. On the other hand, since $P$ is projective then $f$ is a monomorphism, a contradiction. Hence, $\alpha'(L)=1$.

Now, we analyze the string corresponding to such modules. Since $X$ and $Z$ are the start and end terms of an almost split sequence with indecomposable middle term $Y$,  then $Y=N(\beta_0)=M(\delta_s^{-1}\dots \delta_1^{-1}\beta_0\gamma_1^{-1}\dots\gamma_r^{-1})=M(C)$ with $C$ a string that starts on a deep and ends on a peak. Moreover, $X=M(\gamma_1^{-1}\dots\gamma_r^{-1})$ and $Z=M(\delta_s^{-1}\dots \delta_1^{-1})$.

Firstly, assume that $C$  starts on a peak. Then $C$  starts and ends on a peak, and therefore, since $Y$ is not injective then $s\geq 1$. Hence, $W=M(\delta_{s-1}^{-1}\dots \delta_1^{-1}\beta_0\gamma_1^{-1}\dots\gamma_r^{-1})$ and $U=M(\delta_{s-1}^{-1}\dots \delta_1^{-1})$. Since $W$ is injective then $s-1=0$, $W=M(\beta_0\gamma_1^{-1}\dots\gamma_r^{-1})=M(D_1)$ and $D_1$ is a string that  starts and ends on a peak. If $r=0$, then $W/\mbox{soc} W$ is indecomposable, a contradiction to  $(\ref{injectiveW})$.  Thus, $r\geq 1$ and $L=M(\gamma_2^{-1}\dots\gamma_r^{-1})$ (if $r=1$, $L$ is simple). Since  $\alpha'(L)=1$, then $\tau L= M(\lambda_k^{-1}\dots \lambda_1^{-1}\beta_1\gamma_2^{-1}\dots\gamma_r^{-1})$ and the string corresponding to $\tau L$ starts in a deep and ends on a peak. Now, we analyze  the beginning of the string corresponding to $\tau L$ to determine the codomain of the irreducible morphisms whose domain is $\tau L$.

If $\lambda_k^{-1}\dots \lambda_1^{-1}\beta_1\gamma_2^{-1}\dots\gamma_r^{-1}$ starts on a peak, since  $\tau L$ is not injective then $k\geq 1$. Then there is an irreducible morphism from $\tau L$ to $M(\lambda_{k-1}^{-1}\dots \lambda_1^{-1}\beta_1\gamma_2^{-1}\dots\gamma_r^{-1})$ and this module is $W$, therefore injective. Then we get that $W=M(\beta_1\gamma_2^{-1}\dots\gamma_r^{-1})$ and $\beta_1\gamma_2^{-1}\dots\gamma_r^{-1}$ has length $r$ a contradiction.

If $\lambda_k^{-1}\dots \lambda_1^{-1}\beta_1\gamma_2^{-1}\dots\gamma_r^{-1}$ does not start on a peak, then  there is an irreducible morphism from $\tau L$ to $M(\epsilon_l^{-1}\dots \epsilon_1^{-1}\beta_2\lambda_k^{-1}\dots \lambda_1^{-1}\beta_1\gamma_2^{-1}\dots\gamma_r^{-1})$, and this module should be injective. Hence, there is an irreducible morphism from  $\tau L$ to $M(\beta_2\beta_1\gamma_2^{-1}\dots\gamma_r^{-1})$. Clearly, $\beta_2\beta_1\gamma_2^{-1}\dots\gamma_r^{-1}$ is not equal to $D_1$. Similarly, we can see that $\beta_2\beta_1\gamma_2^{-1}\dots\gamma_r^{-1}$ is not equal to $D_1^{-1}$.

Secondly, assume that $C$ does not  start on a peak. In this case, we have that $W=M(\nu_t^{-1}\dots \nu_{1}^{-1}\beta_1\delta_s^{-1}\dots \delta_1^{-1}\beta_0\gamma_1^{-1}\dots\gamma_r^{-1})$. Since $W$ is injective, then $t=0$, $s=0$ and  $W=M(\beta_1\beta_0\gamma_1^{-1}\dots\gamma_r^{-1})=M(D_2)$ with $D_2$ is a string that starts and ends on a peak. Then $r\geq1$, otherwise, $W/\mbox{soc} W$ is indecomposable.  Then $L=M(\gamma_2^{-1}\dots\gamma_r^{-1})$. Since  $\alpha'(L)=1$, then $\tau L= M(\lambda_k^{-1}\dots \lambda_1^{-1}\beta_1\gamma_2^{-1}\dots\gamma_r^{-1})$ and the string corresponding to $\tau L$ starts in a deep and ends on a peak.

Again, if we analyze the beginning of the string corresponding to $\tau L$ to determine the codomain of the irreducible morphisms with domain $\tau L$,  we can discard the case  with similar arguments as before.
\end{proof}

\begin{lem}\label{lema W no iny 2}
A configuration of almost split sequences as follows:

\[
\xymatrix  @R=0.3cm  @C=0.6cm {
X\ar[dr]\ar@{.}[rr]&&Z\ar[dr]&\\
&Y\ar[dr]\ar@{.}[rr]\ar[ru]&&V\\
&&W|\ar[dr]\ar[ru]&\\
&&&L|}
\]

\noindent with  $W$ and  $L$ indecomposable injective $A$-modules such that there is an irreducible morphism from $L$ to $\tau L$ is a forbidden configuration in $\Gamma_A$.
\end{lem}
\begin{proof}
Since there are not morphisms from an injective to a  projective, then $\alpha'(L)=1$. Moreover, $\tau L$ is not projective, then $\tau^2 L$ is defined and there is an irreducible morphism from $\tau L$ to $\tau^2 L$. Since $W$ is injective, then $\tau^2 L \not \simeq W$ and therefore, $\alpha'(\tau L)=2$. Then there is a configuration of almost split sequences as follows:

\begin{equation} \label{injective-W2}
\xymatrix  @R=0.3cm  @C=0.6cm {
X\ar[dr]\ar@{.}[rr]&&Z\ar[dr]&&\\
&Y\ar[dr]\ar@{.}[rr]\ar[ru]&&V&\\
&&W|\ar[dr]^{g_1}\ar[ru]&&\\
&\tau L\ar@{.}[rr]\ar[dr]\ar[ru]^{g_3}&&L|\ar[dr]^{g_2}&\\
&&\tau^2 L\ar[ru]&&\tau L}
\end{equation}

Since $W\rightarrow W$ is a cycle and $g_1, g_2$ are epimorphisms then by  Lemma \ref{liu-cha}, $g_3$ is a monomorphism.

Since $X$ and $Z$ are the start and end terms of an almost split sequence with indecomposable middle term $Y$,  then  $Y=N(\beta_0)=M(\delta_s^{-1}\dots \delta_1^{-1}\beta_0\gamma_1^{-1}\dots\gamma_r^{-1})=M(C)$ with  $C$ a string that starts in a deep and ends on a peak.  Moreover,  $X=M(\gamma_1^{-1}\dots\gamma_r^{-1})$ and $Z=M(\delta_s^{-1}\dots \delta_1^{-1})$.

Assume that $C$ starts on a peak. Then $C$ starts and ends on a peak and therefore, since $Y$ is not injective then $s\geq 1$. Hence, $W=M(\delta_{s-1}^{-1}\dots \delta_1^{-1}\beta_0\gamma_1^{-1}\dots\gamma_r^{-1})$, and  $U=M(\delta_{s-1}^{-1}\dots \delta_1^{-1})$. Since  $W$ is injective, then $s-1=0$, $W=M(\beta_0\gamma_1^{-1}\dots\gamma_r^{-1})=M(D_1)$ and  $D_1$ starts and ends on a peak. If $r=0$, then $W/\mbox{soc} W$ is indecomposable, a contradiction with $(\ref{injective-W2})$. Then $r\geq 1$ and $L=M(\gamma_2^{-1}\dots\gamma_r^{-1})$. Since $L$ is injective, but not simple
then $r\geq 2$ and $\tau L= L/\mbox{soc}\,L= M(\gamma_3^{-1}\dots\gamma_r^{-1})$.

Since the irreducible morphism from $\tau L$ to  $W$ is a monomorphism, then $\gamma_3^{-1}\dots\gamma_r^{-1}$ either does not start on a peak or does not end on a peak. In the former case, there is an irreducible morphism from $\tau L$ to $M(\lambda_k^{-1}\dots \lambda_1^{-1}\beta_1\gamma_3^{-1}\dots\gamma_r^{-1})$ and this module is $W$ and therefore, injective. Then it is of the form  $M(\beta_1\gamma_3^{-1}\dots\gamma_r^{-1})$ but the string corresponding to this module has length $r-1$, a contradiction. In the latter case, there is an irreducible morphism from $\tau L$ to  $M(\gamma_3^{-1}\dots\gamma_r^{-1}\beta_2^{-1}\epsilon_1\dots \epsilon_l)$ and this module must be injective. Then it is of the form  $M(\gamma_3^{-1}\dots\gamma_r^{-1}\beta_2^{-1})$ but the string corresponding to this  module is of length $r-1$,  a contradiction.

Assume that $C$ does not start on a peak. Then $W= M(\nu_t^{-1}\dots \nu_{1}^{-1}\beta_1\delta_s^{-1}\dots \delta_1^{-1}\beta_0\gamma_1^{-1}\dots\gamma_r^{-1})$. Since  $W$ is  injective, then $t=0$, $s=0$ and  $W=M(\beta_1\beta_0\gamma_1^{-1}\dots\gamma_r^{-1})=M(D_2)$ with  $D_2$ a string that starts and ends on a peak. Then $r\geq1$, otherwise, $W/\mbox{soc} W$ is indecomposable. Then $L=M(\gamma_2^{-1}\dots\gamma_r^{-1})$. Since  $L$ is  injective but not simple,
then  $r\geq 2$ and  $\tau L= L/\mbox{soc}\,L= M(\gamma_3^{-1}\dots\gamma_r^{-1})$. With similar arguments as before, we get that this case is not possible, proving the lemma.
\end{proof}

\begin{lem}\label{lema W no iny 3}
A configuration of almost split sequences as follows:

\[
\xymatrix  @R=0.3cm  @C=0.6cm {
X\ar[dr]\ar@{.}[rr]&&Z\ar[dr]&\\
&Y\ar[dr]\ar@{.}[rr]\ar[ru]&&V\\
&&W|\ar[dr]\ar[ru]&\\
&&&L}
\]

 \noindent with $W$ an indecomposable injective $A$-module, $L\simeq  \tau W$ and  $\alpha'(L)=2$ is not a possible configuration in $\Gamma_A$.
\end{lem}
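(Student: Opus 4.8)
The plan is to argue by contradiction, following the same string-combinatorial template used in Lemmas~\ref{W-injectivo 2} and \ref{lema W no iny 2}. Since $X$ and $Z$ are the end terms of an almost split sequence with indecomposable middle term $Y$, I would first write $Y=N(\beta_0)=M(\delta_s^{-1}\dots\delta_1^{-1}\beta_0\gamma_1^{-1}\dots\gamma_r^{-1})=M(C)$ with $C$ a string starting in a deep and ending on a peak, so that $X=M(\gamma_1^{-1}\dots\gamma_r^{-1})$ and $Z=M(\delta_s^{-1}\dots\delta_1^{-1})$; by \cite{BR} the irreducible morphisms $Y\to W$, $W\to V$ and $W\to L$ are then read off from the beginnings of the relevant strings. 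I would split into the two cases according to whether $C$ starts on a peak. In the first case $C$ starts and ends on a peak, $s\geq 1$, and the injectivity of $W$ forces $s=1$, giving $W=M(\beta_0\gamma_1^{-1}\dots\gamma_r^{-1})$ with $r\geq 1$ (otherwise $W/\mathrm{soc}\,W$ is indecomposable, contradicting that $W$ has the two irreducible quotients $V$ and $L$); computing $W/\mathrm{soc}\,W$ identifies the nontrivial arm, and $V$ turns out to be simple. The case where $C$ does not start on a peak is analogous, now forcing $s=t=0$ and $W=M(\beta_1\beta_0\gamma_1^{-1}\dots\gamma_r^{-1})$ with two direct arrows.

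The input that replaces the injectivity of $L$ used in Lemma~\ref{lema W no iny 2} is the pair of hypotheses $L\simeq\tau W$ and $\alpha'(L)=2$. Since $L\simeq\tau W$ we have $\tau^{-1}L\simeq W$, so the almost split sequence starting at $L$ is exactly the one ending at the injective $W$; thus $\alpha'(L)=2$ is the same as $\alpha(W)=2$, and its two middle terms are the indecomposable modules admitting an irreducible morphism into $W$, one of which is $Y$. On the other hand the arrow $W\to L$ in the configuration says that $L\simeq\tau W$ is a direct summand of $W/\mathrm{soc}\,W$. The decisive step is then to confront these two descriptions of $\tau W$: computing $\tau W$ as the left-hand term of the almost split sequence ending at the injective string module $W$ (by removing the appropriate hook at each peak end of its string), and comparing the resulting string with the summands of $W/\mathrm{soc}\,W$ found above. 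One checks that the string obtained for $\tau W$ has the wrong length (equivalently, the wrong number of direct arrows, namely not the single $\beta_0$, resp.\ the pair $\beta_1\beta_0$, carried by $W$) to coincide with any summand of $W/\mathrm{soc}\,W$; this incompatibility is the contradiction. Lemma~\ref{liu-cha} is available throughout to fix the monomorphism/epimorphism directions of the irreducible morphisms, exactly as in the earlier lemmas.

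The main obstacle will be the bookkeeping in this last step: one must run the peak/deep analysis of the strings of $W$, $L\simeq\tau W$ and of the two middle terms of the sequence ending at $W$ in each branch (string starting on a peak or not), apply the Butler--Ringel hook/cohook operations correctly, and verify in every branch that the length count really excludes the equality $\tau W\simeq(\text{a summand of }W/\mathrm{soc}\,W)$ forced by the arrow $W\to L$. Here the hypothesis $\alpha'(L)=2$ is used decisively, since it both pins down the shape of the sequence ending at $W$ and guarantees that $Y$ is only one of its two middle terms; the contradiction is then the same type of length mismatch that drives Lemmas~\ref{W-injectivo 2} and \ref{lema W no iny 2}.
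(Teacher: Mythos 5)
Your setup (the string $C$ of $Y$, the two cases according to whether $C$ starts on a peak, the computation of $W$ and of the two summands of $W/\mbox{soc}\,W$) agrees with the paper, but the decisive step is both different from the paper's and left unverified, and as stated it has a genuine gap. The paper first exploits $\alpha'(L)=2$ structurally: the second middle term of $0\to L\to Y\oplus P\to W\to 0$ is a \emph{projective} $P$, so $g:L\to P$ is a monomorphism, hence $f:Y\to W$ is a monomorphism, hence the string of $Y$ does not start on a peak --- this kills one of your two cases outright and pins down $W=M(\beta_1\beta_0\gamma_1^{-1}\dots\gamma_r^{-1})$. You never obtain this monomorphism, so you must grind through both cases. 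More importantly, the paper's contradiction is \emph{not} the one you propose: it uses the forced irreducible morphism $L\to Y$ (coming from that almost split sequence) and checks that no codomain of an irreducible morphism out of $L=M(\gamma_2^{-1}\dots\gamma_r^{-1})$ can have string $C$ or $C^{-1}$.

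Your proposed contradiction --- compute $\tau W$ from the string of the injective $W$ and show by a length count that it cannot be a summand of $W/\mbox{soc}\,W$ --- is not automatic and is where the gap lies. For injective string modules in general this incompatibility is simply false: Proposition \ref{not}(c), Corollary \ref{not2} and the example following it exhibit injective modules $W$ with an irreducible morphism $W\to\tau W$, i.e.\ with $\tau W$ a summand of $W/\mbox{soc}\,W$. So the contradiction must be extracted from the very particular shape of $W$ forced by the configuration together with the deep/peak conditions at both ends of its string, and a bare length comparison does not settle all branches: computing $\tau W$ requires a four-way case analysis (hook removed or co-hook added at each end of the string of $W$), and in the branch where both hooks are removed $\tau W$ comes out simple, which for $r=1$ has the same length as $L$; ruling this out needs the finer observation that the relevant end of the string of $W$ cannot end in a deep when $s(\gamma_1)=s(\beta_0)$. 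You acknowledge the bookkeeping but do not perform it, and your parenthetical about the summands of $W/\mbox{soc}\,W$ ``carrying'' $\beta_0$ or $\beta_1\beta_0$ is not correct (they carry at most the single arrow $\beta_1$). Finally, note that your announced ``decisive'' use of $\alpha'(L)=2$ never actually enters your final comparison, which only uses $L\simeq\tau W$ and $L$ a summand of $W/\mbox{soc}\,W$; that the hypothesis drops out of your argument is itself a warning sign that the step as described is not yet a proof.
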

\begin{proof}
Since $\alpha'(L)=2$ and $W$ is injective, then there is an irreducible morphism from  $L$ to a projective $A$-module  $P$, where $P \not \simeq Y$. Then there is a configuration of almost split sequences as follows:

\[
\xymatrix  @R=0.3cm  @C=0.6cm {
X\ar[dr]\ar@{.}[rr]&&Z\ar[dr]&\\
&Y\ar[dr]_f\ar@{.}[rr]\ar[ru]&&V\\
L\ar[dr]_{g}\ar@{.}[rr]\ar[ru]&&W|\ar[dr]\ar[ru]&\\
&|P\ar[ru]&&L}
\]

\noindent Since $g$ is a monomorphism, then $f$ is a monomorphism.

Since $X$ and $Z$ are the end points of an almost split with indecomposable middle term $Y$, then $Y=N(\beta_0)=M(\delta_s^{-1}\dots \delta_1^{-1}\beta_0\gamma_1^{-1}\dots\gamma_r^{-1})=M(C)$ with $C$ a string that starts in a deep and ends in a peak.
 Since $f$ is a monomorphism, then the string corresponding to $Y$ does not start on a peak. Then  $W=M(\lambda_k^{-1}\dots\lambda_1^{-1}\beta_1\delta_s^{-1}\dots \delta_1^{-1}\beta_0\gamma_1^{-1}\dots\gamma_r^{-1})$. Since $W$ is injective, then $k=s=0$ and  $W=M(\beta_1\beta_0\gamma_1^{-1}\dots\gamma_r^{-1})$.

On the other hand, since $W/\mbox{soc}\,W$ is not indecomposable, then $r\geq 1$ and $L=M(\gamma_2^{-1}\dots \gamma_r^{-1})$. Moreover, there is an irreducible morphism from $L$ to $Y$.

If $r=1$, then $L=M(s(\gamma_1))$ is simple. There are irreducibles morphisms from $L$ to modules of the form $M(\epsilon_l^{-1}\dots \epsilon_1^{-1}\beta_2)$. The strings corresponding to such  modules are equal to $C$ or $C^{-1}$. In any case, it is a contradiction.

Now, assume that $r\geq2$. If the string corresponding to  $L$ starts or ends on a peak, then there is an irreducible morphism from  $L$ to a module whose string has length $r-3$ while $C$ has length $r+2$, a contradiction.

If  $\gamma_2^{-1}\dots \gamma_r^{-1}$ does not start on a peak, then there is an irreducible morphism from  $L$ to $M(\lambda_k^{-1}\dots \lambda_1^{-1}\beta_2\gamma_2^{-1}\dots \gamma_r^{-1})$ and this module should be $Y$. If $$\lambda_k^{-1}\dots \lambda_1^{-1}\beta_2\gamma_2^{-1}\dots \gamma_r^{-1}=C=\beta_1\gamma_1^{-1}\dots \gamma_r^{-1}$$ then $k=0$ and $r=1$. Hence $\beta_2=\beta_1\gamma_1^{-1}$, a contradiction. Now, if  $$\lambda_k^{-1}\dots \lambda_1^{-1}\beta_2\gamma_2^{-1}\dots \gamma_r^{-1}=C^{-1}=\gamma_r\dots \gamma_1\beta_1^{-1}$$ then $k=0$ and  $r=2$ getting a contradiction with the length of the strings.

Finally, if $\gamma_2^{-1}\dots \gamma_r^{-1}$ does not end on a peak, then there is an irreducible morphism from $L$ to $M(\gamma_2^{-1}\dots \gamma_r^{-1}\beta_2^{-1}\epsilon_1\dots\epsilon_l)$ and this module should be $Y$. With a similar analysis as before, we get that this case is not possible.
\end{proof}

\begin{lem} \label{Vicky}
Let $A$ be a string algebra, and  $\Gamma$ be a component of $\Gamma_A$. Let $I$ be
an injective (non-projective)  $A$-module such that there exists an irreducible
morphisms from $I$ to $\tau I$ with $I\in \Gamma$. Then, there are not three irreducible morphisms between indecomposable modules $f_1:X\rightarrow Y$, $f_2:Y\rightarrow W$ and $f_3:W\rightarrow V$ in $\Gamma$ such that $f_3f_3f_1\in \Re^6\backslash\Re^7$ and a configuration as follows:

\begin{equation}\label{eqnvicky}
\xymatrix  @R=0.3cm  @C=0.6cm {
X\ar[dr]\ar@{.}[rr]&&Z\ar[dr]&\\
&Y\ar[dr]\ar@{.}[rr]\ar[ru]&&V\\
&&W\ar[dr]\ar[ru]&\\
&&&I|}
\end{equation}
\end{lem}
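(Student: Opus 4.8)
The plan is to argue by contradiction: I would assume that irreducible morphisms $f_1,f_2,f_3$ with $f_3f_2f_1\in\Re^6\backslash\Re^7$ and the configuration (\ref{eqnvicky}) exist, alongside the standing hypothesis that $I$ is injective (non-projective) with an irreducible morphism $I\rightarrow\tau I$. The first step is to produce the relevant three-cycle. Since the arrow $W\rightarrow I$ occurs in (\ref{eqnvicky}) and $I$ is injective and non-projective, the almost split sequence ending at $I$, namely $0\rightarrow\tau I\rightarrow E\rightarrow I\rightarrow 0$, has $W$ as a summand of $E$; hence there is an irreducible morphism $\tau I\rightarrow W$, and together with $W\rightarrow I$ and the hypothesis $I\rightarrow\tau I$ this yields the cycle $I\rightarrow\tau I\rightarrow W\rightarrow I$ of length three. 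Setting $L:=I$ and $U:=V$, the configuration (\ref{eqnvicky}) enriched with this cycle is exactly the top part of the configurations treated in Lemmas \ref{W-A3-notinjective}, \ref{W-injectivo 2}, \ref{lema W no iny 2} and \ref{lema W no iny 3}.

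I would then split the argument according to whether $W$ is injective. If $W$ is injective, then $W$ and $L=I$ are both injective and there is an irreducible morphism $I\rightarrow\tau I$; this is precisely the forbidden configuration of Lemma \ref{lema W no iny 2} (the remaining subcases, according to whether $I\simeq\tau W$, being covered by Lemmas \ref{W-injectivo 2} and \ref{lema W no iny 3}). Hence this case cannot occur, a contradiction.

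The remaining, and principal, case is that $W$ is not injective. Then $\tau^{-1}W$ is defined, and reading the arrows $W\rightarrow V$ and $W\rightarrow I$ off (\ref{eqnvicky}) shows that the almost split sequence starting at $W$ is $0\rightarrow W\rightarrow V\oplus I\rightarrow\tau^{-1}W\rightarrow 0$, so that $I\rightarrow\tau^{-1}W$ is irreducible. Now $I$ is injective, so by \cite{BR} its string $C$ starts and ends on a peak; consequently the two irreducible morphisms leaving $I$ given by \cite{BR} both shorten the string $C$ (removing a cohook at each peak), and they must be $I\rightarrow\tau I$ and $I\rightarrow\tau^{-1}W$ (when they coincide one falls into the subcase $W\simeq\tau^2 I$). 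I would then compute the string of $W=\tau(\tau^{-1}W)$ from the corresponding shortening of $C$ and confront it with the requirement that $I$ be a middle summand of the almost split sequence starting at $W$. Comparing the lengths and the peak/deep patterns of the strings involved, exactly as in the proofs of Lemmas \ref{W-A3-notinjective}--\ref{lema W no iny 3}, forces the string of $I$ to fail to both start and end on a peak while having the length dictated by the mesh at $W$, contradicting the injectivity of $I$.

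The main obstacle is this last case: one must carry out the Butler--Ringel bookkeeping for $\tau$ applied to the two shortenings of $C$ and rule out each sub-case according to where the intermediate strings start or end on a peak or in a deep, mirroring the string computations already made in the preceding lemmas. The injectivity of $I$, i.e. that its string both starts and ends on a peak, is what ultimately clashes with the length constraint imposed by the almost split sequence starting at $W$, and this is the step that demands the most care.
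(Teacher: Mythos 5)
Your opening step is fine: from the arrow $W\rightarrow I$ and the almost split sequence ending at the non-projective injective $I$ you correctly obtain an irreducible morphism $\tau I\rightarrow W$ and hence the three-cycle $I\rightarrow \tau I\rightarrow W\rightarrow I$; and the sub-case where $W$ is injective is indeed disposed of by Lemma \ref{lema W no iny 2}. But the principal case, $W$ not injective, is where the proof has to live, and there your argument has a genuine gap — in fact the contradiction you aim for does not exist. You propose to show, by Butler--Ringel string bookkeeping alone, that the configuration (\ref{eqnvicky}) with $I$ injective and $W$ non-injective is impossible. It is not impossible: the paper's own proof exhibits it. For the algebras of Proposition \ref{not}(a) (a loop $\alpha$ at $a$ with $\alpha\beta\in I_A$), the component containing $I_x$ realizes exactly the configuration (\ref{eqnvicky}) with bottom module $I_x$ injective admitting an irreducible morphism to $\tau I_x$, and with $W=M_1$ not injective. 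What fails there is not the configuration but the existence of a composition of three irreducible morphisms in $\Re^6\backslash\Re^7$ — i.e., the conclusion of the lemma in that case rests entirely on the hypothesis $f_3f_2f_1\in\Re^6\backslash\Re^7$, which your argument for the main case never invokes. Any purely combinatorial derivation of a contradiction from the mesh at $W$ would disprove a configuration that actually occurs, so that step cannot be repaired as stated.

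Relatedly, your appeal to Lemmas \ref{W-A3-notinjective}--\ref{lema W no iny 3} as templates is misplaced for this case: Lemma \ref{W-A3-notinjective} needs both $W$ and the bottom module non-injective (its configuration contains $\tau^{-1}W$ and $\tau^{-1}L$), while Lemmas \ref{W-injectivo 2}, \ref{lema W no iny 2} and \ref{lema W no iny 3} all need $W$ injective; none covers "$W$ non-injective, bottom module injective", which is precisely why Lemma \ref{Vicky} exists as a separate statement. The paper's route is different and is the missing ingredient: since an injective module is not $\tau$-stable, Proposition \ref{not} classifies the finitely many local shapes of $Q_A$ admitting an irreducible morphism $I\rightarrow\tau I$ with $I$ injective (cases (a), (b), (c) via Corollary \ref{not2}); one then inspects the explicitly computed Auslander--Reiten component in each case, checking either that (\ref{eqnvicky}) cannot be embedded (the representation-infinite case (c), where the required module $\tau^4 I_a$ is not injective) or that, although (\ref{eqnvicky}) and three-cycles do occur, no composition of three irreducible morphisms lies in $\Re^6\backslash\Re^7$ (the representation-finite cases). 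Without this classification, or some other use of the radical hypothesis, the main case of your proof does not go through.
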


\begin{proof}
First, assume that $A$ is representation-finite. Consider $\widetilde{Q}$ as described in   Proposition \ref{not} (a) or (b).
We only analyze (a), since (b) follows similarly.
If $\widetilde{Q}$ is the quiver  $${\xymatrix  @R=0.3cm  @C=0.6cm {a \ar@(ul,dl)[]_{\al}\ar[r]^{\beta}&x}}$$ with $\alpha^n=0$ for $n\geq 2$ and $\alpha\beta=0$, then by Corollary \ref{not2} we know that there exists an irreducible morphism from $I_x$ to
$\tau I_x$. Consider the configuration of almost split sequences that involves such morphism:

$${{\xymatrix  @R=0.3cm  @C=0.6cm {
|P_a\ar@{.}[rr]\ar[dr]&& I_a|\ar[dr]&&&&\\
 & P_x\ar[ur]\ar[dr]\ar@{.}[rr]&&\tau M_1\ar[dr]&&&\\
 \tau M_1\ar[ur]\ar[dr]\ar@{.}[rr] && M_1\ar[ur]\ar[dr]\ar@{.}[rr]&& \tau M_2\ar@{.}[rd]&&\\
 &\tau M_2\ar[ur]\ar@{.}[rrdd]&& M_2\ar[ur]\ar@{.}[rd]&&\tau M_{n-2}\ar[rd]&\\
 &&&&M_{n-2}\ar[ur]\ar[dr]\ar@{.}[rr]&& \tau I_x\\
 &&&\tau I_x\ar[ur]\ar@{.}[rr]&& I_x|\ar[ur]&}}}$$

 \noindent where we identify the modules which are the same. Observe that
if there exist other arrows which start or end in some point of
$\widetilde{Q}$, then the above configuration does not change. To obtain (\ref{eqnvicky}), we conclude that $n=3$. Below, we illustrate the situation.

\[
\xymatrix  @R=0.3cm  @C=0.6cm {
|P_a\ar[dr]\ar@{.}[rr]&&I_a|\ar[dr]&&\\
&P_x\ar[dr]\ar@{.}[rr]\ar[ru]&&\tau M_1\ar[dr]&\\
\tau M_1\ar[dr]\ar@{.}[rr]\ar[ru]&&M_1\ar@{.}[rr]\ar[dr]\ar[ru]&&\tau I_x\\
&\tau I_x\ar[ur]\ar@{.}[rr]&&I_x|\ar[ru]&}
\]

\noindent Even though there are cycles of length three, it is not hard to see that
 there are not three irreducible morphisms such that their composition is in $\Re^6\backslash \Re^7$.

Now, if $A$ is representation-infinite, by Proposition \ref{not}, we infer that
$\widetilde{Q}$ is of the form (c). Then $\widetilde{Q}$ is the quiver

$$\xymatrix  @R=0.4cm  @C=0.6cm {
&\bullet \ar@{.}[r] & \bullet  \ar[rd]^{\gamma_m}& & 	\\
1 \ar[ru]^{\gamma_1} \ar[rrr]^{\alpha} & & &\bullet \ar[r]^{\beta} &a
}$$

\noindent with $m\geq 1$, $\alpha \beta=0$, and $\gamma_1\cdots\gamma_m\beta\notin I_A$,
and there exists an irreducible morphism from $I_a$ to $\tau I_a$. Let $\Gamma$ be the component
of $\Gamma_A$ such that $I_a\in \Gamma$. Then $\Gamma$ is as follows:

\begin{displaymath}
\xymatrix  @R=0.1cm  @C=0.4cm {
\ar@{.}[r]& I_a|\ar[rd]&&&&&&&\\
&& \tau I_a\ar[rd]\ar@{.}[rr] &&I_a|\ar[rd] &&&&\\
\ar@{.}[r]& \tau^3I_a\ar[rd]\ar[ru]\ar@{.}[rr]&&\tau^2I_a\ar[rd]\ar[ru]\ar@{.}[rr]&& \tau I_a\ar[rd]\ar@{.}[rr]&& I_a|\ar[rd]&\\
&& \tau^4I_a\ar[rd]\ar[ru]\ar@{.}[rr]&& \tau^3I_a\ar[rd]\ar[ru]\ar@{.}[rr]&&\tau^2I_a\ar[rd]\ar[ru]\ar@{.}[rr]&&\tau I_a\\
\ar@{.}[r] &\tau^6I_a\ar[ru]\ar@{.}[d]&&\tau^5I_a\ar[ru]\ar@{.}[d]&& \tau^4I_a\ar[ru]\ar@{.}[d]&& \tau^3I_a\ar@{.}[d]\ar[ru]&\\
&&&&&&&&}
\end{displaymath}

\noindent Observe that
if there exist other arrows which start or end in some point of
$\widetilde{Q}$, the quiver $\Gamma$ does not change. In this case, we do not have a configuration as (\ref{eqnvicky}) since $\tau^4 I_a$ is not an injective module. Therefore, we dismiss this case.
\end{proof}

Now, we are in position to prove the main result of this paper.

\begin{thm} Let $A$ be a string algebra. There are not irreducible morphisms  $f_1:X \rightarrow Y,\, f_2:Y \rightarrow W,$ and $f_3:W \rightarrow V$ between   indecomposable $A$-modules such that $f_3f_2f_1\in \Re^6(X,V)\backslash \Re^7(X,V)$ with $f_2f_1\notin \Re^3(X,W)$ and  $f_3f_2\notin \Re^3(Y,V)$.
\end{thm}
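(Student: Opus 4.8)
The plan is to argue by contradiction, assuming such three irreducible morphisms $f_1, f_2, f_3$ exist, and to funnel the resulting structure into one of the forbidden configurations established in the preceding lemmas. First I would invoke Proposition \ref{cycle3}: under the stated hypotheses ($f_3f_2f_1 \in \Re^6 \backslash \Re^7$, $f_2f_1 \notin \Re^3$, $f_3f_2 \notin \Re^3$) there must exist a cycle of irreducible morphisms of length three in $\mbox{mod}\,A$. By Theorem \ref{MtauM}, the existence of such a cycle is equivalent to the existence of an indecomposable non-projective module $M$ together with an irreducible morphism from $M$ to $\tau M$. Thus the whole problem is reduced to analyzing components of $\Gamma_A$ that carry such a ``self-$\tau$'' arrow.

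The second step is a case split according to whether the relevant module is $\tau$-stable or not. If it is $\tau$-stable, the proposition following Theorem \ref{MtauM} forces $M$ to be $\tau$-periodic of rank three, and I would show this periodicity is incompatible with producing a genuine jump from $\Re^6$ to $\Re^7$ while keeping the two-fold compositions out of $\Re^3$; the degree bookkeeping here is governed by Corollary \ref{tres}, which already guarantees that a non-zero composition of three irreducible morphisms lying in $\Re^4$ is actually in $\Re^6$, so the only way to reach $\Re^6 \backslash \Re^7$ with well-behaved pairwise compositions is through a very rigid local configuration. If instead $M$ is not $\tau$-stable, then some $\tau^m M$ is projective or injective, and Proposition \ref{not} together with Corollary \ref{not2} pins down exactly which full subquivers $\til{Q}_1,\dots,\til{Q}_4$ of $Q_A$ can occur and exactly where the injective/projective module carrying the irreducible morphism to its $\tau$-translate sits.

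The heart of the argument is then to combine Proposition \ref{cycle3} with Lemmas \ref{W-A3-notinjective}, \ref{W-injectivo 2}, \ref{lema W no iny 2}, \ref{lema W no iny 3}, and \ref{Vicky}. The configuration $(\ref{tres-morf})$ produced by Proposition \ref{cycle3} places a module $W$ (or its dual) at the foot of a cycle of length three, and I would match the position of the injective module $I$ (with an irreducible morphism $I \to \tau I$) against the possible shapes of this cycle. Each of the lemmas rules out one such shape: Lemma \ref{W-A3-notinjective} forbids the configuration $(\ref{1})$ outright; Lemmas \ref{W-injectivo 2}, \ref{lema W no iny 2}, and \ref{lema W no iny 3} forbid the configurations where $W$ is injective and $L$ ranges over the remaining possibilities ($L$ injective with $L \not\simeq \tau W$, $L$ injective, and $L \simeq \tau W$ with $\alpha'(L)=2$); and Lemma \ref{Vicky} directly handles the case where the injective $I$ with $I \to \tau I$ lies at the bottom of $(\ref{eqnvicky})$, treating both the representation-finite subquivers of Proposition \ref{not}(a),(b) and the representation-infinite subquiver (c). Exhausting these cases shows that no admissible configuration survives, so no such triple $f_1, f_2, f_3$ can exist.

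The main obstacle I anticipate is the exhaustiveness of the case analysis: I must verify that the configuration forced by Proposition \ref{cycle3}, once combined with the string-module descriptions of the almost split sequences (via \cite{BR}), genuinely falls into one of the already-forbidden patterns for every admissible placement of the injective or projective module, including the dual situation where $d_r(h_3)=2$ rather than $d_l(h_3)=2$. In particular, I expect the delicate point to be confirming that the length-three cycle extracted from the radical filtration always attaches to the special module $W$ (or $L$) in precisely the way demanded by Lemmas \ref{W-A3-notinjective}--\ref{Vicky}, rather than in some configuration not yet excluded; resolving this will likely require re-running the string-length comparisons of those lemmas with the specific modules $X, Y, W, V$ of the present statement substituted in.
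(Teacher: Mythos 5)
Your overall strategy matches the paper's: start from Proposition \ref{cycle3}, extract a cycle of length three, and kill every possible configuration with Lemmas \ref{W-A3-notinjective}, \ref{W-injectivo 2}, \ref{lema W no iny 2}, \ref{lema W no iny 3} and \ref{Vicky}. You also assign each lemma to essentially the correct case. However, the step you defer to the end as ``the main obstacle'' is precisely where the paper does most of its work, and without it the argument does not close. Concretely: from $f_3f_2f_1\notin\Re^7$ one gets a path $\psi:X\to A_1\to\dots\to A_5\to V$ of six irreducible morphisms not in $\Re^7$, and one must prove that $A_1\simeq Y$, $A_2\simeq W$, $A_3\not\simeq V$, and then that $A_5\simeq Z$ is impossible --- the latter by running the Bautista--Smal\o{} non-sectionality trichotomy on the cycle $Y\fle W\fle A_3\fle Y$ and, in each branch, computing that $\psi$ would be forced into $\Re^7$ (using that the irreducible maps in the mesh differ from the $h_i$ by scalars plus $\Re^2$, together with $h_3h_2h_1=0$). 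Only after this does one land in the case $A_5\simeq W$ with $A_4\simeq\tau A_3$, which is the unique shape the forbidden-configuration lemmas actually address. Your proposal names this verification but does not supply it, and it is not a routine check: it is the mechanism that forces the cycle to attach at $W$ rather than at $Y$.

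A second genuine problem is your top-level dichotomy on $\tau$-stability of $M$. In the $\tau$-stable branch you propose to derive a contradiction from $\tau$-periodicity of rank three via ``degree bookkeeping governed by Corollary \ref{tres}.'' Corollary \ref{tres} only upgrades a composition lying in $\Re^4$ to one lying in $\Re^6$; it is compatible with, and says nothing against, a composition in $\Re^6\backslash\Re^7$, so it cannot exclude anything here. The paper does not use periodicity at all for this case: when no injective appears in the cycle, the contradiction comes from the explicit string-length comparisons of Lemma \ref{W-A3-notinjective} (configuration $(\ref{1})$), which is a computation with the Butler--Ringel description of almost split sequences, not a $\tau$-period argument. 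Routing the non-$\tau$-stable case through Theorem \ref{MtauM}, Proposition \ref{not} and Corollary \ref{not2} is fine (that is how Lemma \ref{Vicky} works internally), but the $\tau$-stable branch as you describe it would fail.
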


\begin{proof}

Let $f_1:X \rightarrow Y,\, f_2:Y \rightarrow W,$ and $f_3:W \rightarrow V$ be irreducible morphisms as in the statement. By \cite[Theorem 2.2]{CCT}
there is a configuration of almost split sequences as follows:

\begin{equation} \label{tres-morf}
\xymatrix  @R=0.3cm  @C=0.6cm {
	X\ar[dr]_{h_1}\ar@{.}[rr]&& Z\ar[dr]&&\\
&Y\ar[dr]_{h_2}\ar[ur]_{h_4}\ar@{.}[rr]&& V&&\\
&& W\ar[ur]_{h_3}&&&&}
\end{equation}

\noindent such that $h_3h_2h_1=0$, $\alpha'(X)=1$ and $\alpha'(Y)=2$ or its dual.

As we proved in Proposition \ref{cycle3}, there exists a path of irreducible morphisms
between indecomposable modules as follows:

$$\xymatrix  @R=0.3cm  @C=0.6cm {
\psi:X\ar[r]^{g_1}&Y\ar[r]^{g_2}&W\ar[r]^{g_3}&A_3\ar[r]^{g_4}&A_4\ar[r]^{g_5}&A_5\ar[r]^{g_6}&V
}$$

\noindent where $A_3\not \simeq V$ and $A_3 \not \simeq  X$, otherwise, $\psi \in \Re^7(X,V)$.
Moreover, there is cycle of length three $Y\rightsquigarrow Y$ or $W\rightsquigarrow W$, if $A_5\simeq Z$ or $A_5 \simeq W$, respectively.
We claim that the first cycle is not possible in our situation. In fact, assume
that $A_5\simeq Z$, then $A_4\simeq Y$.
Following \cite[Theorem 7]{BS}, the path  $Y\fle W\fle A_3\fle Y \rightarrow W$  is not sectional. Thus,
\begin{enumerate}
\item $Y\simeq \tau A_3$, or
\item $W\simeq \tau Y$, or
\item $A_3\simeq \tau W.$
\end{enumerate}

If $Y\simeq \tau A_3$ then $A_3\simeq V$ contradicting that $\psi\notin \Re^7(X,V)$.

If $W\simeq \tau Y$, then there is a configuration as follows:

\[
\xymatrix  @R=0.3cm  @C=0.6cm {
	X\ar[dr]_{h_1}\ar@{.}[rr]&& Z\ar[dr]&&\\
&Y\ar[dr]_{h_2}\ar[ur]\ar@{.}[rr]&& V\ar[dr]&&\\
&& W\ar[ur]\ar[dr]_{h_3}\ar@{.}[rr]&&Y&&\\
&&&A_3\ar[ur]_{h_4}&&&}
\]

\noindent with  $h_4h_3h_2h_1=0$. Again, the dimension over $k$ of the irreducible morphisms involved is one.
Then $g_i=\alpha_ih_i+\mu_i$, with  $\alpha_i\in k^{*}$ and $\mu_i\in \Re^2$ para $i=1, 2, 3, 4$. Hence
$g_4g_3g_2g_1 \in \Re^5(X,Y)$ a contradiction to the fact that $\psi\notin \Re^7(X,V).$
Therefore, $W\not\simeq \tau Y.$

Finally, suppose that $A_3\simeq \tau W$. Then $\alpha'(A_3)=2$, otherwise, $\alpha'(A_3)=1$, and there is a configuration of almost split sequences as follows:

\[
\xymatrix  @R=0.3cm  @C=0.6cm {
	A_3\ar[dr]_{h_4}\ar@{.}[rr]&& W\ar[dr]&&\\
&Y\ar[dr]_{h_5}\ar[ur]\ar@{.}[rr]&& V&&\\
X\ar@{.}[rr]\ar[ur]&& Z\ar[ur]_{h_6}&&&&}
\]

\noindent where $h_6h_5h_4=0$. Since any irreducible morphisms $g_i$ between the involved modules is of the form  $g_i=\alpha_ih_i+\mu_i$, with $\alpha_i\in k^{*}$ and $\mu_i\in \Re^2$ for  $i=4, 5, 6$
then $g_6g_5g_4 \in \Re^4(A_3,V)$, getting that $\psi\in \Re^7(X,V)$, a contradiction. Thus $\alpha'(A_3)=2.$

By Lemma \ref{lema W no iny 3}, $W$ is not an injective module, then there is a configuration of almost split sequences as follows

\[
\xymatrix  @R=0.3cm  @C=0.6cm {
	X\ar[dr]_{h_1}\ar@{.}[rr]&& Z\ar[dr]&&\\
&Y\ar[dr]_{h_2}\ar[ur]\ar@{.}[rr]&& V\ar[dr]&&\\
&& W\ar[ur]\ar[dr]_{h_3}\ar@{.}[rr]&&\tau^{-1}W\ar[dr]&&\\
&&&A_3\ar[dr]_{h_4}\ar[ur]\ar@{.}[rr]&&W\ar[dr]&\\
&&&&Y\ar[dr]_{h_5}\ar[ur]\ar@{.}[rr]&& V\\
&&&&&Z\ar[ur]_{h_6}}
\]

Since $g_i=\alpha_ih_i+\mu_i$, with $\alpha_i\in k^{*}$ and  $\mu_i\in \Re^2$ for $i=1, \dots, 6$, then
$\psi\in \Re^7(X,V),$ a contradiction.  Therefore $A_5\not\simeq Z$.

In consequence, $A_5\simeq W$ and the path $\psi$ is as follows:

$$\xymatrix  @R=0.3cm  @C=0.6cm {
\psi:X\ar[r]^{g_1}&Y\ar[r]^{g_2}&W\ar[r]^{g_3}&A_3\ar[r]^{g_4}&A_4\ar[r]^{g_5}&W\ar[r]^{g_6}&V.
}$$

\noindent Observe, that there is a cycle $\varphi:W\fle W$ of length three.

With a similar analysis as before, it is not hard to see that  $A_3\not\simeq \tau W$ and $W\not\simeq \tau A_4$. Then $A_4\simeq \tau A_3$. From Lemmas \ref{W-injectivo 2}, \ref{lema W no iny 2} and \ref{Vicky} we have that $W$ and $A_3$ are not injective. Moreover, if $A_3$ is not injective then we get a contradiction to Lemma \ref{W-A3-notinjective}. Analyzing all the cases we get that  $W\not\simeq A_5$, proving the result.
\end{proof}

\section{On the composition of $n$ irreducible morphisms in $\Re^{n+1}$ which does not belong to the infinite radical}
In this section, we show families of algebras, having $n$ irreducible morphisms such that their compositions belong to $\Re^{n+t}\backslash \Re^{n+t+1}$, with $n\geq 3$ and $ t \geq 4$, and moreover, with the condition that the composition of $n-1$ of them is not in $\Re^n$.

We denote by $(U(m,n-1),I)$ the string algebras whose quiver is

{\begin{displaymath}
    \xymatrix   @R=.3cm  @C=.6cm {
     & a_2\ar[r]^{\ga_2}& \dots\ar[r]^{\ga_{m-1}}& a_m\ar[rd]^{\ga_m}&&\\
     1\ar[ru]^{\ga_1}\ar[rd]_{\be_1} &&&& x& \\
      & b_2\ar[r]_{\be_2}& \dots\ar[r]_{\be_{n-2}}& b_{n-1}\ar[ru]_{\be_{n-1}}&&}
\end{displaymath}}

\noindent with $I=<\ga_{m-1}\ga_{m}>$, for  $m, n\geq 2$.

We shall prove that in the module category of such algebras there are  $n$  irreducible morphisms with composition in $\Re^{n+2m}\backslash\Re^{n+2m+1}$.

\begin{rem}\label{notaBG}
\emph{We define the following strings in $U(m,n-1)$:
\benu \item  $G_j=\ga_1\dots \ga_j,$ and
 $\overline{G}_j=\ga_j\dots \ga_{m-1}$ for  $1\leq j\leq m-1$.
\item $B_i=\be_1\dots \be_i,$ and $\overline{B}_i=\be_i\dots \be_{n-1}$ for $1\leq i\leq n-1$.
\enu
Note that $G_{m-1}=\overline{G}_1$ and $B_{n-1}=\overline{B}_1$.}
\end{rem}

To prove the results of this section, we recall the following notation introduced
in \cite{CG}.
\vspace{.05in}

Let $A$ be a string algebra and let $I=M(D_1D_2)$ be an indecomposable injective $A$-module, where
$D_1=\gamma_s\dots \gamma_1$ is a direct string that starts on a peak,
$D_2=\beta_1^{-1}\dots \beta_r^{-1}$ is an inverse string that ends on a peak. We consider
the following set of strings:

$$\mathcal{C}_{D_2}=\{D D_2 \text{ \emph{ where either }}\; D \text{ \emph{is trivial or}}\; D=D'\ga_1 \text{ \emph{ with }}\; D'  \text{ \emph{ a string}}\}.$$

In a similar way, we can define $\mathcal{C}_{D_1}$ considering $I=M(D_{2}^{-1}D_{1}^{-1})$.
\vspace{.05in}

Next, we recall the quiver $Q_u^e$ defined in \cite{CG}, whose vertices are the strings involved in the sets $\mathcal{C}_{D_1}$
and $\mathcal{C}_{D_2}$.
\vspace{.05in}

Let $A \simeq kQ/I$ and consider the injective $I(u)$, with $u \in Q_0$. Then
\begin{enumerate}
	\item The vertices of $(Q_u^e)_0$ are the strings $C$ in $Q$ such that $e(C)=u,$ where $C$ is either the trivial walk
	$\eps_u$ or $C= C'\al$, with $\al \in Q_1$.
	\item If $a=C$ and $b=C'$ are two vertices of $(Q_u^e)_0$, then there is an arrow from
	$a\fle b$ in $Q_u^e$ if $C'$ is the reduced walk of $\be^{-1}C$, for some $\beta \in Q_1$.
\end{enumerate}

Dually, we can consider an indecomposable projective $A$-module, define the set of strings
and the quiver $Q_u^s$, see \cite{CG}.
\vspace{.05in}

The following results state below  are essential to prove Theorem \ref{kpar}.

\begin{lem}\label{grcomp}
Let $A$ be the algebra $(U(m,n-1),I)$, with
$m,n\geq 2$. Consider the irreducible morphisms
$\iota_{a_m}: \emph{rad}\, P_{a_m} \rightarrow P_{a_m}$ and $\theta_{a_m}:I_{a_m} \rightarrow I_{a_m}/\emph{soc}\,I_{a_m}$,
where $P_{a_m}$ and $I_{a_m}$ are the projective and injective $A$-modules corresponding to the vertex $a_m$, respectively. Then
$d_r(\iota_{a_m})=m+n-1$. Moreover, $d_r(\iota_{a_m})=d_l(\theta_{a_m}).$
\end{lem}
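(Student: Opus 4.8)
The plan is to make all four modules in the statement completely explicit as string modules and then to read off each degree from a path of irreducible morphisms realizing it, using the sets $\mathcal{C}_{D_i}$ and the quivers $Q_{a_m}^{s}, Q_{a_m}^{e}$ recalled above to bound the length of that path. Since the only relation incident to $a_m$ is $\gamma_{m-1}\gamma_m\in I_A$, the paths starting at $a_m$ are just $\eps_{a_m}$ and $\gamma_m$, so $P_{a_m}=M(\gamma_m)$ and $\mathrm{rad}\,P_{a_m}=S_x$; thus $\iota_{a_m}\colon S_x\rightarrow M(\gamma_m)$ is the inclusion of the socle, with $S_x$ sitting at the unique sink $x$. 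Dually, the paths ending at $a_m$ are exactly $\gamma_j\cdots\gamma_{m-1}$, so $I_{a_m}=M(\gamma_1\cdots\gamma_{m-1})$ is uniserial with socle $S_{a_m}$, and $\theta_{a_m}\colon I_{a_m}\rightarrow I_{a_m}/\mathrm{soc}\,I_{a_m}=M(\gamma_1\cdots\gamma_{m-2})$ is the canonical projection. The vertex $x$ is the common endpoint of the $\gamma$-branch (length $m$) and the $\beta$-branch (length $n-1$), and I expect the total number $m+n-1$ of arrows to be precisely what emerges as each degree.

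Next I would produce the realizing path for $d_r(\iota_{a_m})$. Starting at $P_{a_m}=M(\gamma_m)$ and successively applying the irreducible morphisms dictated by the beginning of the corresponding strings, one obtains a path of irreducible morphisms $P_{a_m}=Z_0\rightarrow Z_1\rightarrow\cdots\rightarrow Z_{m+n-1}$ whose composite $g$ lies in $\Re^{m+n-1}\setminus\Re^{m+n}$; the set of strings and the quiver $Q_{a_m}^{s}$ attached to $P_{a_m}$ organize these modules and force the walk to be prolongable exactly until the relation $\gamma_{m-1}\gamma_m$ (equivalently the deep reached through the $\beta$-branch) blocks further prolongation, which is what pins the length at $m+n-1$. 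Along this path the socle embedding $g\iota_{a_m}\colon S_x\rightarrow Z_{m+n-1}$ drops two radical layers once $g$ has traversed the whole walk, so $g\iota_{a_m}\in\Re^{m+n+1}$, giving $d_r(\iota_{a_m})\le m+n-1$.

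For the reverse inequality I would show that no composite $g'\colon P_{a_m}\rightarrow Z'$ with $g'\in\Re^{d}\setminus\Re^{d+1}$ and $d<m+n-1$ can satisfy $g'\iota_{a_m}\in\Re^{d+2}$. Since $\iota_{a_m}$ is a monomorphism, its right degree is finite by \cite[Corollary 3.2]{CLT}, and using the description of morphisms between string modules from \cite{BR} one checks that for $d<m+n-1$ the strings appearing as vertices of $Q_{a_m}^{s}$ before the blocking step yield only composites $g'\iota_{a_m}$ that remain monomorphic, hence cannot jump two radical layers. This minimality is the step I expect to be the main obstacle: it requires controlling the radical filtration of \emph{all} composites out of $P_{a_m}$, not merely exhibiting one good path, and the quiver $Q_{a_m}^{s}$ is exactly the device that enumerates the relevant strings and caps the admissible length at $m+n-1$, so that $d_r(\iota_{a_m})=m+n-1$.

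Finally, for the equality $d_r(\iota_{a_m})=d_l(\theta_{a_m})$ I would run the symmetric computation on the injective side. Conceptually, the standard duality $D\colon\mathrm{mod}\,A\rightarrow\mathrm{mod}\,A^{\mathrm{op}}$ turns left degrees in $A$ into right degrees in $A^{\mathrm{op}}$ and carries $\theta_{a_m}$ to the inclusion of the radical of the projective at $a_m$ over $A^{\mathrm{op}}$, which is again a string algebra with the same number $m+n-1$ of arrows. Concretely, I would compute $d_l(\theta_{a_m})$ directly from the set $\mathcal{C}_{D_2}$ and the quiver $Q_{a_m}^{e}$ attached to $I_{a_m}$, mirroring the two previous paragraphs: the dual realizing path has length $m+n-1$ (upper bound) and the same enumeration of strings in $Q_{a_m}^{e}$ rules out any shorter witness (lower bound), using that $\theta_{a_m}$ is an epimorphism and hence of finite left degree by \cite[Corollary 3.2]{CLT} and \cite{L}. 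This yields $d_l(\theta_{a_m})=m+n-1=d_r(\iota_{a_m})$, completing the proof.
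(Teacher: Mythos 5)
You have correctly identified the tool the paper relies on --- the auxiliary quivers $Q_{a_m}^{s}$ and $Q_{a_m}^{e}$ attached to $P_{a_m}$ and $I_{a_m}$ in \cite{CG} --- and your explicit descriptions $P_{a_m}=M(\gamma_m)$, $\mathrm{rad}\,P_{a_m}=S_x$, $I_{a_m}=M(\gamma_1\cdots\gamma_{m-1})$ are right. But the proposal has a gap exactly where the content of the lemma lies. The point of \cite[Proposition 3.2]{CG} is that $d_l(\theta_{a_m})=\mathrm{card}((Q_{a_m}^{e})_0)-1$ and $d_r(\iota_{a_m})=\mathrm{card}((Q_{a_m}^{s})_0)-1$ hold as exact equalities, so the entire proof consists of enumerating the vertices of these two quivers: for $Q_{a_m}^{e}$ they are $\overline{G}_1,\dots,\overline{G}_{m-1}$, $\eps_{a_m}$, $B_1^{-1}\overline{G}_1,\dots,B_{n-1}^{-1}\overline{G}_1$ and $\gamma_m B_{n-1}^{-1}\overline{G}_1$, which is $m+n$ strings, and dually for $Q_{a_m}^{s}$. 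You never perform this enumeration; you only predict that the number of arrows of $Q$ ``emerges'' as the degree. Without the list there is nothing pinning the count at $m+n$, and the coincidence with the number of arrows of $Q$ is an observation, not an argument.

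The second problem is that the hand-made lower bound you substitute for this count is not a valid inference. From ``the composites $g'\iota_{a_m}$ remain monomorphic'' you conclude they ``cannot jump two radical layers,'' but a monomorphism can lie in an arbitrarily high power of $\Re$; monomorphy of $g'\iota_{a_m}$ gives no control on whether $g'\iota_{a_m}\in\Re^{d+2}$. What \cite[Corollary 3.2]{CLT} gives you is only that $d_r(\iota_{a_m})$ is finite (since $\iota_{a_m}$ is a monomorphism over a representation-finite algebra), not any particular lower bound. The minimality really does come from the exact formula of \cite[Proposition 3.2]{CG} (equivalently from Liu's characterization of finite degree through a configuration of almost split sequences terminating at $I_{a_m}$, as exploited in Proposition \ref{seccPI}); once you invoke that, your separate upper-bound path and lower-bound discussion become redundant. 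Your closing duality argument for $d_r(\iota_{a_m})=d_l(\theta_{a_m})$ is fine in spirit, though the paper simply computes both quivers and observes that each has $m+n$ vertices.
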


\begin{proof}
Consider $(U(m,n-1),I)$, with $m,n\geq 2$, and the irreducible morphisms $\iota_{a_m}$ and  $\theta_{a_m}$.
By \cite[Proposition 3.2]{CG}, we know that $d_l(\theta_{a_m})$ and $d_r(\iota_{a_m})$ can be compute by the number of vertices of the quivers $Q_{a_m}^e$ and $Q_{a_m}^s$, respectively.

Recall that the vertices of the quiver $Q_{a_m}^e$ are the strings $C$ such that $e(C)={a_m}$, and  $C=\eps_{a_m}$ or $C$ is of the form $C=C'\ga_{m-1}$, with $C'$ a string.
With the notation of Remark \ref{notaBG}, the quiver $Q_{a_m}^e$ is the following:

{\tiny\begin{displaymath}
    \xymatrix   @R=.5cm  @C=.4cm {
    &\overline{G}_1\ar[rd]\ar[ld]&&&\\
    \overline{G}_2\ar[d]&&B_1^{-1}\overline{G}_1\ar[d]&&\\
     \vdots\ar[d]&&\vdots\ar[d]&&\\
      \overline{G}_{m-1}\ar[d]&&B_{n-2}^{-1}\overline{G}_1\ar[dr]&&\ga_mB_{n-1}^{-1}\overline{G}_1\ar[dl]\\
      \varepsilon_{a_m} &&&B_{n-1}^{-1}\overline{G}_1&}
\end{displaymath}}

The cardinal of $(Q_{a_m}^e)_0$  is $m+n$. By \cite[Proposition 3.2]{CG},  $d_l(\theta_{a_m})=\mbox{card}((Q_{a_m}^e)_0)-1$. Hence $d_l(\theta_{a_m})=m+n-1$.

Dually, the quiver $Q_{a_m}^s$ is the following:

{\tiny\begin{displaymath}
    \xymatrix   @R=.5cm  @C=.4cm {
    &\ga_m\overline{B}^{-1}_1\ar[rd]\ar[ld]&&&\\
    \ga_m\overline{B}^{-1}_1G_1\ar[d]&&\ga_m\overline{B}^{-1}_2\ar[d]&&\\
     \vdots\ar[d]&&\vdots\ar[d]&&\\
      \ga_m\overline{B}^{-1}_1G_{m-2}\ar[d]&&\ga_m\overline{B}^{-1}_{n-1}\ar[dr]&&\eps_{a_m}\ar[dl]\\
      \ga_m\overline{B}^{-1}_1G_{m-1}&&&\ga_m&}
\end{displaymath}}

\noindent where $Q_{a_m}^s$  has $m+n$ vertices. Therefore, by \cite[Proposition 3.2]{CG} we have that $d_r(\iota_{a_m})=m+n-1$, proving the result.
\end{proof}

\begin{rem}
\emph{Observe that $\ga_m\overline{B}^{-1}_1G_{m-1}=\ga_mB_{n-1}^{-1}\overline{G}_1$ is a vertex in both
quivers $Q_{a_m}^e$ and $Q_{a_m}^s$.
We denote by  $L$
the $A$-module whose string is the mentioned one.}
\end{rem}

Given $X,Y$ and $Z$ indecomposable modules, we denote by
$X\rightsquigarrow Y\rightsquigarrow Z$ a path of irreducible morphisms between indecomposable modules from $X$ to $Z$, going through $Y$.

\begin{prop}\label{seccPI}
Let $A=(U(m,n-1),I)$, with
$m,n\geq 2$, and $P_{a_m}$, $S_{a_m}$ and $I_{a_m}$ be the projective, simple, and  injective module corresponding to the vertex $a_m$, respectively.
Let $L$ be the string module $M(\ga_m\overline{B}^{-1}_1G_{m-1})$.
Then, there is a sectional path $P_{a_m}\rightsquigarrow L\rightsquigarrow S_{a_m}\rightsquigarrow L\rightsquigarrow I_{a_m}$ in $\emph{mod}\,A$.
\noindent Moreover, the cycle $L\rightsquigarrow S_{a_m}\rightsquigarrow L$
has length $2m$.
\end{prop}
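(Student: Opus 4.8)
The plan is to produce the path as a concatenation of four sectional segments that meet at $L$ (which occurs twice) and at $S_{a_m}$, writing each segment down explicitly through the Butler--Ringel combinatorics of \cite{BR} and then checking irreducibility and the sectional condition $M_{j-2}\not\simeq\tau M_j$ arrow by arrow. First I would record the four modules as string modules: $P_{a_m}=M(\ga_m)$, $I_{a_m}=M(G_{m-1})$, $S_{a_m}=M(\eps_{a_m})$ and $L=M(\ga_m\overline{B}_1^{-1}G_{m-1})$, the last being exactly the common vertex of $Q_{a_m}^s$ and $Q_{a_m}^e$ singled out in the Remark preceding the statement. I would use the vertex sets of these two quivers only as a dictionary for the strings that will appear; the actual arrows of the sectional paths are supplied by the Butler--Ringel description, in which every irreducible morphism between string modules either adds a hook or deletes a cohook.

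The heart of the argument is the cycle $L\rightsquigarrow S_{a_m}\rightsquigarrow L$. Going down I would first strip the directed part $G_{m-1}=\ga_1\cdots\ga_{m-1}$ off the right-hand end of $L$, obtaining $m-1$ irreducible epimorphisms
$$L=M(\ga_m\overline{B}_1^{-1}G_{m-1})\fle M(\ga_m\overline{B}_1^{-1}G_{m-2})\fle\cdots\fle M(\ga_m\overline{B}_1^{-1}),$$
and then apply the single cohook deletion $M(\ga_m\overline{B}_1^{-1})\fle S_{a_m}$ that erases the whole block $\ga_m\overline{B}_1^{-1}$ at once; this block is one cohook (a direct arrow followed by a maximal inverse string ending on a peak) no matter how long $\overline{B}_1$ is. That gives $m$ irreducible morphisms from $L$ to $S_{a_m}$. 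Dually, the mirror sequence of $m$ hook additions produces $m$ irreducible monomorphisms $S_{a_m}\rightsquigarrow L$, so the cycle has length $2m$. It is precisely the fact that the passage between $M(\ga_m\overline{B}_1^{-1})$ and $S_{a_m}$ is a single operation, independent of $n$, that forces the length to be $2m$ rather than growing with $n$; the smallest case $m=n=2$, where the cycle reads $L\fle I_x\fle S_{a_2}\fle P_1\fle L$, is a good consistency check.

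For the two outer segments I would build $P_{a_m}=M(\ga_m)\rightsquigarrow L$ by adding hooks (running through the strings $M(\ga_m\overline{B}_j^{-1})$ and $M(\ga_m\overline{B}_1^{-1}G_i)$ recorded in $Q_{a_m}^s$) and $L\rightsquigarrow I_{a_m}=M(\overline{G}_1)$ by deleting cohooks (through the strings $M(B_i^{-1}\overline{G}_1)$ and $M(\overline{G}_i)$ of $Q_{a_m}^e$). Finally I would verify sectionality along the whole concatenation: inside each of the four segments the condition $M_{j-2}\not\simeq\tau M_j$ is automatic from the hook/cohook form of the maps, so the only genuine checks are at the three turning points $L,\,S_{a_m},\,L$, where one computes the relevant translate with \cite{BR} and notes, for instance, that $\tau S_{a_m}$ is defined (since $S_{a_m}$ is neither projective nor injective) and that $\tau S_{a_m}$ and $\tau L$ are string modules of the wrong length to coincide with the neighbouring terms. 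I expect the main obstacle to be exactly this Butler--Ringel bookkeeping: pinning down the precise form of each irreducible morphism so as to be sure that each segment really consists of honest irreducible maps, that the descending and ascending halves each have length exactly $m$, and that no mesh relation sneaks in at a turning point.
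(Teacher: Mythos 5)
Your combinatorial core coincides with the paper's: the proof in the text also singles out $L$ as the common vertex of $Q_{a_m}^s$ and $Q_{a_m}^e$ and computes the length of $L\rightsquigarrow S_{a_m}$ (resp. $S_{a_m}\rightsquigarrow L$) by listing exactly the strings you list, with the passage $M(\ga_m\overline{B}_1^{-1})\fle S_{a_m}$ occurring in a single step independent of $n$; so your computation of the cycle length $2m$ is correct and is essentially the paper's. Where you genuinely diverge is in how the ambient sectional paths are produced. The paper does not glue four hand-made segments: it uses Lemma \ref{grcomp} ($d_r(\iota_{a_m})=d_l(\theta_{a_m})=m+n-1$) together with \cite[Proposition 2.5]{CG} to obtain, in one stroke, two sectional paths $P_{a_m}\rightsquigarrow S_{a_m}$ and $S_{a_m}\rightsquigarrow I_{a_m}$ of length $m+n-1$, each sectional by that proposition. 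Then only one junction needs checking, at $S_{a_m}$, and the check is immediate: the predecessor of $S_{a_m}$ on the path is $M(\ga_m\overline{B}_1^{-1})=I_x$, which is injective, so the successor cannot be $\tau^{-1}I_x$. Your route buys independence from \cite{CG}, but at the cost of verifying $M_{j-2}\not\simeq\tau M_j$ along the whole concatenation by hand. That is not ``automatic from the hook/cohook form'' inside a segment (a hook added at one end followed by a hook at the other end realizes precisely $\tau^{-1}$ of the original string module), and the one check you name, that $\tau S_{a_m}$ is defined, is not the decisive one.

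There is also a concrete slip in your description of the segment $P_{a_m}\rightsquigarrow L$: it does not run through the modules $M(\ga_m\overline{B}_1^{-1}G_i)$. Those lie \emph{after} $L$, on your own segment $L\rightsquigarrow S_{a_m}$, and the maps your version would need, $M(\ga_m\overline{B}_1^{-1}G_{i-1})\fle M(\ga_m\overline{B}_1^{-1}G_i)$, point the wrong way: the irreducible morphism between these two modules is the epimorphism deleting $\ga_i$. The correct segment has length $n-1$; it adds one inverse arrow $\be_j^{-1}$ at a time through the modules $M(\ga_m\overline{B}_j^{-1})$ for $j=n-1,\dots,2$, and then reaches $L$ by a single hook addition $\be_1^{-1}G_{m-1}$. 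This does not affect the two assertions of the present proposition (existence of the sectional path and the length $2m$ of the cycle), but it must be repaired before Theorem \ref{kpar}, where $\ell(P_{a_m}\rightsquigarrow L)=n-1$ is used.
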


\begin{proof}
Consider the irreducible morphism $\theta_{a_m}:I_{a_m}  \rightarrow I_{a_m}/\mbox{soc}I_{a_m}$.
The module $I_{a_m}/\mbox{soc}I_{a_m}$ is indecomposable. Moreover, $\mbox{Ker}(\theta_{a_m})=S_{a_m}$
and by Lemma \ref{grcomp},  $d_l(\theta_{a_m})=m+n-1$. By \cite[Proposition 2.5]{CG}, there is a configuration of almost split sequences as follows:

\begin{displaymath}
\xymatrix  @R=0.3cm  @C=0.6cm {
	S_{a_m}\ar[dr]_{f_1} \ar@{.}[rr]& & \tau^{-1}S_{a_m}\ar[dr]& & &\\
	& M_1 \ar[ur]\ar[dr]_{f_2} \ar@{.}[rr]& & \tau^{-1}M_1 \ar@{.}[dr] & &   \\
	&  & M_2\ar[ur]\ar@{.}[dr] & & \tau^{-1}N_{m+n-3}\ar[dr] &  \\
	& & &   M_{m+n-2}\ar@{.}[rr]\ar[dr]_{f_{m+n-1}}\ar[ur] & & I_{a_m}/\mbox{soc}I_{a_m}  \\
	& &  & &  I_{a_m}\ar[ur]_{\theta_{a_m}} &}
\end{displaymath}

\noindent where the path $S_{a_m}\fle M_1\fle \dots \fle M_{m+l-2}\fle I_{a_m}$
is sectional.

On the other hand, the modules of such a path are in correspondence with the string modules $M(C)$, where $C$ are vertices of $Q_{a_m}^e$.
In particular, $L=M(\ga_m\overline{B}^{-1}_1G_{m-1})$, is a module that appears in such a path. Moreover, $L\neq S_{a_m}$ and  $L\neq I_{a_m}$.
Hence,  the path is of the form  $S_{a_m}\rightsquigarrow L\rightsquigarrow I_{a_m}$.

We claim that the length of the path $S_{a_m}\rightsquigarrow L$ is $m$.  To prove our claim, we order the strings of the set $\mathcal{C}_{\eps_{a_m}}$ as follows;   $C_i<C_{i+1}$ if there is an
irreducible morphism $M(C_i)\fle M(C_{i+1}).$  To determine such order in the strings, we may analyze if the strings start on a peak.

Let $C_0=\eps_{a_m}^{-1}$. Since $C_0$  does not start  on a peak,
we define $C_1=\ga_{m-1}\eps_{a_m}^{-1}=\overline{G}_{m-1}$.
Then there is an irreducible morphism $S_{a_m}=M(C_0)\fle M(C_1)$.

Observe that for $2\leq j\leq m-1$,
the strings $\overline{G}_j=\ga_j\dots \ga_{m-1}$
do not start on a peak.
Moreover, following \cite{BR}, we observe that there exist irreducible morphisms
$M(\overline{G_2})\rightarrow M(B_{n-1}^{-1}\overline{G_1})$ and
$M(\overline{G_j})\rightarrow M(\overline{G_{j-1}})$,
for $3\leq j \leq m-1$. Continuing with the order in the set $\mathcal{C}_{\eps_{a_m}}$,
for  $2\leq i\leq m-2$, we define the strings $C_i=\overline{G}_{m-i}$
and  $C_{m-1}=B_{n-1}^{-1}\overline{G}_1$.

Finally,  $C_{m-1}$ does not start on a peak. Then $C_{m}=\ga_mB_{n-1}^{-1}\overline{G}_1$ and
$M(C_m) =L$.
Hence, we have a path of  irreducible morphisms as follows

\begin{equation}\label{SmIm}
S_{a_m}=M(C_0)\fle M(C_1)\fle \dots \fle M(C_{m-1})\fle M(C_m)=L\,\rightsquigarrow \,I_{a_m}
\end{equation}

\noindent where the path $S_{a_m}\rightsquigarrow L$ has length $m$.

Dually, if we consider the irreducible morphism $\iota_{a_m}:\mbox{rad}\, P_{a_m}\fle P_{a_m}$ then $\mbox{rad}\,P_{a_m}$ is indecomposable and $\mbox{Coker}(\iota_{a_m})=S_{a_m}$.
By \cite[Proposition 2.5]{CG} and Lemma \ref{grcomp},
there is a sectional path $P_{a_m}\rightsquigarrow S_{a_m}$ of length $m+n-1$. Again, the modules of such a path are in correspondence with the vertices of $Q_{a_m}^s$. In particular,
$L=M(\ga_m\overline{B}^{-1}_1G_{m-1})$ is a module of such a path. Moreover, $L\neq P_{a_m}$ and $L\neq S_{a_m}$.
Hence, we have a path of the form $P_{a_m}\rightsquigarrow L\rightsquigarrow S_{a_m}.$

Again we can prove that $L\rightsquigarrow S_{a_m}$ has length $m$,  by considering an order on the strings of the set $\mathcal{D}_{\eps_{a_m}}$ as follows, $D_i<D_{i+1}$ if there is an irreducible morphism $M(D_{i+1})\fle M(D_i)$.
In this case, to order the strings, we have to analyze if the strings ends in a deep.

Similarly, we can prove that $M(D_m)=L$ and that there is a path of irreducible morphisms of the form:

\begin{equation}\label{PmSm}
P_{a_m}\rightsquigarrow L=M(D_m)\fle M(D_{m-1})\fle \dots \fle M(D_1)\fle M(D_0)=S_{a_m}
\end{equation}

\noindent where the path $L\rightsquigarrow S_{a_m}$  has length $m$.

Now, from the paths (\ref{SmIm}) and (\ref{PmSm}) we obtain the path
\begin{equation}\label{PmIm}
P_{a_m}\rightsquigarrow L\rightsquigarrow S_{a_m}\rightsquigarrow L\rightsquigarrow I_{a_m},
\end{equation}
\noindent where the cycle $L\rightsquigarrow S_m\rightsquigarrow L$ clearly has length $2m$.

It is left to prove that the path (\ref{PmIm}) is sectional.
By construction the paths
$P_{a_m}\rightsquigarrow L\rightsquigarrow S_{a_m}$ and $S_{a_m}\rightsquigarrow L\rightsquigarrow I_{a_m}$
are sectional. We must analyze the path $M(D_1)\fle S_{a_m} \fle M(C_1)$.
Note that $M(D_1)$ is injective,
since $D_1=\ga_mB_{n-1}^{-1}$, where $B_{n-1}$ and $\ga_m$  are string ending in a peak.
More precisely, since $e(B_{n-1})=x=e(\ga_m),$
then $M(D_1)=I_x$.
Therefore, $M(C_1)\not\simeq \tau^{-1}I_x$ and the path (\ref{PmIm}) is sectional, proving the result.
\end{proof}

\begin{prop}\label{fLN}
Let $A=(U(m,n-1),I)$, with
$m,n\geq 2$. Consider  $L=M(\ga_m{B}^{-1}_{n-1}\overline{G}_{1})$
and $N=M({B}^{-1}_{n-2}\overline{G}_{1})$. Then $f:L\fle N$ is an irreducible epimorphism with  $d_l(f)=n-1$.
\end{prop}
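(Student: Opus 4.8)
The plan is to recognise $f$ as a canonical projection of string modules, to compute its kernel, and then to read off $d_l(f)$ from the sectional path already produced in Proposition \ref{seccPI}. By Remark \ref{notaBG} we have $L=M(\ga_m\be_{n-1}^{-1}\be_{n-2}^{-1}\dots\be_1^{-1}\ga_1\dots\ga_{m-1})$ and $N=M(\be_{n-2}^{-1}\dots\be_1^{-1}\ga_1\dots\ga_{m-1})$, so the string of $N$ is obtained from that of $L$ by deleting the initial piece $\ga_m\be_{n-1}^{-1}$. Let $u$ be the basis vector of $M(L)$ at the initial vertex $a_m$. Then the submodule generated by $u$ is spanned by $u$ and $\ga_m u$, hence is isomorphic to $M(\ga_m)=P_{a_m}$ (here $\ga_m\notin I_A$ and $x$ is a sink, so $P_{a_m}=M(\ga_m)$), and $M(L)/\langle u\rangle\cong M(N)$. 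Thus there is a canonical projection $f\colon M(L)\fle M(N)$ with $\ker f\cong P_{a_m}$. Since $\ga_{m-1}\ga_m\in I_A$, no arrow can be prepended to the string of $L$, so it starts on a peak; therefore, by \cite{BR}, the canonical morphism attached to the left hand end of $L$ is irreducible, and $f$ is an irreducible epimorphism (it is an arrow on the path $L\rightsquigarrow S_{a_m}$ of Proposition \ref{seccPI}).

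To bound the left degree I would exhibit the degree realising morphism inside the sectional path of Proposition \ref{seccPI}. By that proposition together with Lemma \ref{grcomp} (which gives $d_r(\iota_{a_m})=m+n-1$, the length of $P_{a_m}\rightsquigarrow S_{a_m}$, while $L\rightsquigarrow S_{a_m}$ has length $m$), the initial subpath $P_{a_m}\rightsquigarrow L$ has length $(m+n-1)-m=n-1$. Its irreducible morphisms are the canonical hook inclusions extending the string $\ga_m$ to the right, so they preserve $\ga_m$ as the left hand substring; consequently their composite $g\colon P_{a_m}\fle M(L)$ is exactly the inclusion of the submodule $\langle u\rangle=\ker f$. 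Being the composite along a sectional path of length $n-1$, $g\in\Re^{n-1}\setminus\Re^{n}$ by \cite{BS}, and $fg=0\in\Re^{n+1}$. By the definition of left degree this gives $d_l(f)\le n-1$.

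For the reverse inequality I would appeal to the degree computation underlying \cite[Proposition 3.2]{CG} (see also \cite{L}): for an irreducible epimorphism the left degree is governed by, and equals the length of, the sectional path realising the kernel inclusion. Applied to $f$, whose kernel inclusion is the sectional monomorphism $g$ of radical degree exactly $n-1$, this yields $d_l(f)=n-1$.

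I expect the main obstacle to be the identification in the second step: one must verify that the composite of the hook inclusions along $P_{a_m}\rightsquigarrow L$ is the kernel inclusion itself, and not some other, non vanishing, element of the two dimensional space $\mathrm{Hom}(P_{a_m},M(L))$ (the two basis vectors of $M(L)$ at $a_m$ give two candidate maps, of which only the left hand one has image $\ker f$), and that this composite lies in $\Re^{n-1}\setminus\Re^{n}$. This is precisely what forces the degree down to $n-1$. Once this is settled, the length count is routine via Proposition \ref{seccPI} and Lemma \ref{grcomp}, and the lower bound is a direct citation of the degree theory of \cite{L} and \cite{CG}.
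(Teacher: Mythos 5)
Your argument is correct, and the first half (realising $f$ as the canonical projection of string modules with $\ker f\simeq M(\ga_m)=P_{a_m}$ and invoking \cite{BR} for irreducibility) coincides with the paper's. For the degree computation, however, you take a genuinely different route. The paper never applies a degree criterion to $f$ directly: it introduces the auxiliary irreducible epimorphism $h:I_x\fle M(B_{n-2})$, which has the same kernel $M(\ga_m)$, places $f$ among the morphisms $g_i:X_i\fle \tau^{-1}X_{i-1}$ of the configuration of almost split sequences attached to $d_l(h)=l$, pins down $l=m+n-2$ via $\dim_k\operatorname{Hom}_A(P_{a_m},S_{a_m})=1$, matches the modules $X_i$ with those of the sectional path of Proposition \ref{seccPI}, and reads off $d_l(f)=n-1$ from the position $L\simeq X_{n-1}$. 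You instead apply the kernel-inclusion characterisation of the left degree of an irreducible epimorphism (the result of \cite{CLT} underlying \cite[Proposition 3.2]{CG}) directly to $f$, after checking that the composite of the canonical hook inclusions along the length-$(n-1)$ subpath $P_{a_m}\rightsquigarrow L$ is precisely the inclusion of $\ker f$ and hence lies in $\Re^{n-1}\setminus\Re^{n}$. Your route is shorter and avoids both the auxiliary module $I_x$ and the module-matching step, at the price of having to resolve the point you yourself flag, namely distinguishing the two elements of the two-dimensional space $\operatorname{Hom}(P_{a_m},L)$; your observation that every step of $P_{a_m}\rightsquigarrow L$ extends the string only on the right, keeping $\ga_m$ as the left-hand substring, settles this, since the composite then lands on the submodule spanned by the initial $a_m$- and $x$-basis vectors, which is exactly $\ker f$. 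Two minor points: the fact that a composite of irreducible morphisms along a sectional path of length $n-1$ lies in $\Re^{n-1}\setminus\Re^{n}$ is due to Igusa--Todorov rather than \cite{BS} (though the paper itself uses it with the same looseness in Theorem \ref{kpar}), and your phrase ``the length of the sectional path realising the kernel inclusion'' should be read as ``the radical degree of the inclusion $\ker f\hookrightarrow L$'', which is what \cite{CLT} actually equates with $d_l(f)$; with those wordings adjusted the proof is complete.
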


\begin{proof}
Consider  $L=M(C)$ and  $N=M(D)$,  where  $C=\ga_{m}\be_{n-1}^{-1}\dots\be_1^{-1}\ga_1\dots \ga_{m-1}$
and $D=\be_{n-2}^{-1}\dots\be_1^{-1}\ga_{1}\dots\ga_{m-1} $ is a string not starting in a deep.
%Moreover, $D_c=\ga_m\be_{n-1}^{-1}D=C$.
By \cite{BR} we have that $f:M(C)\fle M(D)$  is an irreducible epimorphism,
where $\mbox{Ker}(f)\simeq M(C)/M(D)\simeq  M(\ga_m)=P_{a_m}.$
Since $A$ is representation finite, then $d_l(f)< \infty$.

Now, we compute the left degree of $f$.  Since $e(\ga_m)=x$,
we consider the module $I_x=M(B_{n-1}\ga_m^{-1})$.
The indecomposable direct summands of $I_x/\mbox{soc}I_x$ are $J_1(x)=M(B_{n-2})$ and $J_2(x)=S_{a_m}$.

Consider the irreducible morphism $h:I_x\fle J_1(x)$,
where $\mbox{Ker}(f)\simeq M(\ga_m)\simeq \mbox{Ker}(h)$.
Assume that $d_l(h)=l$. Then $f:L\fle N$ is one of the morphisms $g_i:X_i\fle \tau^{-1}X_{i-1}$ of the following configuration of almost split sequences:

\begin{displaymath}
\xymatrix  @R=0.3cm  @C=0.6cm {
	P_{x}\ar[dr]_{f_1} \ar@{.}[rr]& & \tau^{-1}P_x\ar[dr]& & &\\
	& X_1 \ar[ur]_{g_1}\ar[dr]_{f_2} \ar@{.}[rr]& & \tau^{-1}X_1 \ar@{.}[dr] & &   \\
	&  & X_2\ar[ur]_{g_2}\ar@{.}[dr] & & \tau^{-1}X_{l-2}\ar[dr] &  \\
	& & &   X_{l-1}\ar@{.}[rr]\ar[dr]_{f_{l}}\ar[ur]_{g_{l-1}} & & J_1(x)  \\
	& &  & &  I_x\ar[ur]_h &}
\end{displaymath}

\noindent where  $\al'(P_x)=1$ and  $\phi: P_x \rightarrow X_1 \rightarrow \dots \rightarrow X_{l-1} \rightarrow I_x$
is a sectional path. The modules that appear in $\phi$ are the string modules of the set $C_{\gamma_m}$.
In particular, $L$ is one of such modules. Since $L\not \simeq P_x$ and  $L \not \simeq  I_x$, then $L\simeq X_j$, for some $j$, $1\leq j\leq l-1$.

On the other hand, by the proof of Proposition \ref{seccPI}, there is a sectional path

$$\rho: P_{a_m}\fle M_1\fle \dots \fle L\fle \dots \fle M_{m+n-3}\fle I_x\fle S_{a_m}$$

\noindent of length $m+n-1$ and where $L \rightsquigarrow S_{a_m}$
has length $m$.

Since $\mbox{dim}_k(\mbox{Hom}_A(P_{a_m},S_{a_m}))=1$, then $l=m+n-2$. We claim that for each $i$, $1\leq i\leq l-1$ we have that $M_i\simeq X_i$.
In fact, since $\al'(P_m)=1$, then $X_1\simeq M_1$. Now, since $\rho$ is a sectional path, $M_2\not \simeq \tau^{-1}P_x$.
Then $X_2\simeq M_2$. Following this argument, we get that $X_i\simeq M_i$, for $1\leq i\leq l-1$. Since the path $L\rightsquigarrow S_{a_m}$
has length $m$, then $L\simeq X_{n-1}$ and therefore we obtain that
$d_l(f)=n-1$.
 \end{proof}

\begin{rem}\label{obsN}
\emph{By the proofs of Proposition \ref{seccPI} and \ref{fLN}, we have the existence of a sectional path}

$$P_{a_m}\stackrel{\phi} \rightsquigarrow L \rightsquigarrow S_{a_m} \rightsquigarrow L\rightsquigarrow I_{a_m}$$

\noindent \emph{where the path $\phi$ is of length $n-1$ and the cycle $L\rightsquigarrow L$ is of length $2m$.
Moreover, we know that there exists an irreducible morphism $f: L\fle N$. We claim that the module $N$ belongs to such
sectional path. In fact, in the proof of Proposition \ref{seccPI}, we give an order for the strings
$C_1, \dots, C_m$ of the set $\mathcal{C}_{\eps_{a_m}}$, where
$C_m=\ga_mB_{n-1}^{-1}\overline{G}_1$ and  $M(C_m)=L$. }

\emph{Observe that $C_m$ is a string that starts on a peak. Hence $C_m =\ga_m\be^{-1}_{n-1}B_{n-2}^{-1}\overline{G}_1$.
Then $C_{m+1}=B_{n-2}^{-1}\overline{G}_1$ and  $N=M(C_{m+1})$. Moreover, $N$
does not belong to the path $P_{a_m}\rightsquigarrow S_{a_m}$, because the string
$B_{n-2}^{-1}\overline{G}_1$ is not a vertex of the quiver $Q^s_{a_m}$.
Hence, we conclude that the above sectional path is of the form}

\begin{equation*}
P_{a_m} \rightsquigarrow L \rightsquigarrow  S_{a_m} \rightsquigarrow L\fle N\rightsquigarrow I_{a_m}
\end{equation*}

\noindent \emph{where the arrow denotes an irreducible morphism.}
\end{rem}

Now, we are in position to prove the theorem.

\begin{thm}\label{kpar}
Let $A =(U(m,n-1),I)$, with
$m,m\geq 2$. Then there are irreducible morphisms
$h_i:X_i\fle X_{i+1}$ for  $1\leq i\leq n$, between indecomposable $A-$modules,
such that
$h_n\dots h_1\in \Re^{n+2m}(X_1,X_{n+1})\backslash \Re^{n+2m+1}(X_1,X_{n+1})$,
$h_{n-1}\dots h_1\notin \Re^{n}(X_1,X_{n})$ and $h_n\dots h_2\notin \Re^{n}(X_2,X_{n+1})$.
\end{thm}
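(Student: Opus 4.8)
The plan is to read off the $n$ morphisms directly from the sectional path produced in Remark \ref{obsN}. I would set $X_1=P_{a_m}$, take $X_1\fle X_2\fle\cdots\fle X_n=L$ to be the modules of the sectional path $\phi\colon P_{a_m}\rightsquigarrow L$ of length $n-1$, let $h_1,\dots,h_{n-1}$ be its irreducible morphisms, and finally put $X_{n+1}=N$ and $h_n=f\colon L\fle N$, the irreducible epimorphism of Proposition \ref{fLN}. With this labelling $h_{n-1}\cdots h_1\colon P_{a_m}\fle L$ and $h_n\cdots h_2\colon X_2\fle N$ are compositions of $n-1$ irreducible morphisms, while $h_n\cdots h_1\colon P_{a_m}\fle N$ is the composition of all $n$ of them.

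First I would settle the two ``behaves well'' conditions. Since $h_{n-1}\cdots h_1$ runs along the sectional path $\phi$, it lies in $\Re^{n-1}\setminus\Re^{n}$, so in particular $h_{n-1}\cdots h_1\notin\Re^{n}$. For $h_n\cdots h_2$ I would invoke $d_l(f)=n-1$ from Proposition \ref{fLN}: the morphism $h_{n-1}\cdots h_2\colon X_2\fle L$ is a subpath of $\phi$ of length $n-2$, hence lies in $\Re^{n-2}\setminus\Re^{n-1}$ (it is simply $f$ itself when $n=2$). Because $n-2$ is strictly below the left degree of $f$, the definition of left degree forbids any collapse of a morphism in $\Re^{n-2}\setminus\Re^{n-1}$ by $f$ into $\Re^{n}$; therefore $h_n\cdots h_2=f(h_{n-1}\cdots h_2)\notin\Re^{n}$.

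The heart of the argument is to prove $h_n\cdots h_1=f(h_{n-1}\cdots h_1)\in\Re^{n+2m}\setminus\Re^{n+2m+1}$. The key observation is that the length-$n$ path $P_{a_m}\rightsquigarrow L\xrightarrow{f}N$ is \emph{not} sectional: by Proposition \ref{fLN} the arrow $f$ is the hook $L=X_{n-1}\fle\tau^{-1}X_{n-2}=N$, so $X_{n-2}\simeq\tau N$. I would use the mesh relation of the almost split sequence ending at $N=\tau^{-1}X_{n-2}$ to rewrite the composition through this hook as (minus) the composition through the remaining middle terms; iterating this rerouting forces the composition around the cycle $L\rightsquigarrow S_{a_m}\rightsquigarrow L$ of length $2m$. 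Concretely, the goal is to show that modulo $\Re^{n+2m+1}$ the morphism $f(h_{n-1}\cdots h_1)$ coincides, up to a nonzero scalar, with the composition $\Lambda\colon P_{a_m}\fle N$ along the long path $P_{a_m}\rightsquigarrow L\rightsquigarrow S_{a_m}\rightsquigarrow L\xrightarrow{f}N$ of Remark \ref{obsN}, which has length $(n-1)+2m+1=n+2m$. Since by Remark \ref{obsN} that long path is sectional, $\Lambda\in\Re^{n+2m}\setminus\Re^{n+2m+1}$, and hence so is $h_n\cdots h_1$.

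The main obstacle is exactly this last identification. One must track the mesh relations carefully along the string modules of the sets $\mathcal{C}_{\eps_{a_m}}$ and $\mathcal{D}_{\eps_{a_m}}$ that appear in Propositions \ref{seccPI} and \ref{fLN}, checking that at each intermediate hook only the term continuing around the cycle survives modulo the next power of the radical and that no cancellation occurs, so that the rerouted composition is a genuine nonzero multiple of $\Lambda$ rather than dropping into a deeper power or vanishing. The explicit description of the almost split sequences of a string algebra from \cite{BR}, combined with the sectionality of the long path asserted in Remark \ref{obsN} and the degree computation of Proposition \ref{fLN}, is what should make this bookkeeping tractable and force the radical depth to be exactly $n+2m$.
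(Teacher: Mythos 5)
Your skeleton uses the right ingredients (the sectional path $\phi:P_{a_m}\rightsquigarrow L$ of length $n-1$, the cycle $L\rightsquigarrow S_{a_m}\rightsquigarrow L$ of length $2m$, and the epimorphism $f$ with $d_l(f)=n-1$), and your argument that $h_{n-1}\cdots h_1\notin\Re^{n}$ and $h_n\cdots h_2\notin\Re^{n}$ is fine. But the heart of your proof fails. If $f_1,\dots,f_{n-1}$ are the irreducible morphisms of the sectional path from $P_{a_m}=\mathrm{Ker}(f)$ to $L$, then the very meaning of $d_l(f)=n-1$ (see \cite[Proposition 2.5]{CG} and the configuration displayed in the paper's proof) is that $ff_{n-1}\cdots f_1=0$. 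Your proposed mesh rerouting confirms this rather than saving you: the hook $Y_{n-2}\fle L\fle N$ is replaced by the passage through the other middle term $\tau^{-1}Y_{n-3}$, that hook is replaced by the passage through $\tau^{-1}Y_{n-4}$, and so on; the telescoping climbs through the modules $\tau^{-1}Y_{i}$ and terminates at the mesh ending in $\tau^{-1}P_{a_m}$, which has a single middle term because $\alpha'(P_{a_m})=1$, so the whole composite collapses to $0$. It never reaches the cycle $L\rightsquigarrow S_{a_m}\rightsquigarrow L$, which branches off $L$ in the opposite direction and is simply invisible to the mesh relations along $\phi$. Choosing other irreducible morphisms along the same arrows only replaces $0$ by an uncontrolled element of $\Re^{n+1}$, so in no case do you get something in $\Re^{n+2m}\setminus\Re^{n+2m+1}$.

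The missing idea, which is exactly what the paper does, is to \emph{deform} one morphism of the path rather than take the path as given: set $h_i=f_i$ for $i\le n-2$, $h_n=f$, and
$h_{n-1}=f_{n-1}+\rho f_{n-1}$, where $\rho:L\rightsquigarrow L$ is the composite of the $2m$ irreducible morphisms around the cycle. This $h_{n-1}$ is still irreducible since $\rho f_{n-1}\in\Re^{2}$. Then
\begin{equation*}
h_n\cdots h_1=ff_{n-1}\cdots f_1+f\rho f_{n-1}\cdots f_1=f\rho f_{n-1}\cdots f_1,
\end{equation*}
and the surviving term is the composite along the sectional path $P_{a_m}\rightsquigarrow L\rightsquigarrow S_{a_m}\rightsquigarrow L\fle N$ of Remark \ref{obsN}, of length $n+2m$, hence lies in $\Re^{n+2m}\setminus\Re^{n+2m+1}$. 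So the vanishing $ff_{n-1}\cdots f_1=0$ is not an obstacle to be argued around by rerouting; it is the mechanism that isolates the detour term, and without the perturbation of $h_{n-1}$ there is nothing left to isolate.
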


\begin{proof}
Consider the irreducible epimorphism $f:L\fle N$ from Proposition \ref{fLN}.
Then $d_l(f)=n-1$ and  $\mbox{Ker}(f)=P_m$. Then there is a configuration of almost split sequences as follows:

\begin{displaymath}
\xymatrix  @R=0.3cm  @C=0.6cm {
	P_m\ar[dr]_{f_1}\ar@{.}[rr]& & \tau^{-1}P_m\ar[dr]& & &\\
	& Y_1 \ar[ur]\ar[dr]_{f_2} \ar@{.}[rr]& & \tau^{-1}Y_1 \ar@{.}[dr] & &   \\
	&  & Y_2\ar[ur]\ar@{.}[dr]  & & \tau^{-1}Y_{n-3}\ar[dr] &  \\
	& & &   Y_{n-2}\ar@{.}[rr]\ar[dr]_{f_{n-1}}\ar[ur] & & N  \\
	& &  & & L\ar[ur]_{f} &
}
\end{displaymath}
\noindent where
$\delta:P_m\fle Y_1\fle \dots \fle Y_{n-2}\fle L$
is a sectional path of length $n-1$ and  $ff_{n-1}\dots f_1=0$. By Remark \ref{obsN}
there is a sectional path

$$P_m\stackrel{\phi}\rightsquigarrow L\stackrel{\rho_1}\rightsquigarrow S_m \stackrel{\rho_2}\rightsquigarrow L\fle N \rightsquigarrow I_m$$

\noindent where $\ell(\phi)=n-1$ and $\ell(\rho_2\rho_1)=2m$.
Moreover, the modules in the path  $\phi:P_m \rightsquigarrow L$ are the same that the ones in the path $\delta$.

Consider $X_1=P_m$, $X_n=L$, $X_{n+1}=N$
and $X_i=Y_{i+1}$ for $1\leq i\leq n-1$. We define the irreducible morphisms $h_i=f_i$ for $1\leq i\leq n-2$,
$h_{n-1}=f_{n-1}+f_{n-1}\rho$, where $\rho:L\rightsquigarrow L$
is a composition of $2m$  irreducible morphisms which form part of the sectional path $\rho_2\rho_1$, and $h_n=f$.
Then the composition

\begin{equation*}
\begin{array}{lll}
h_n\dots h_1&=&f(f_{n-1}+f_{n-1}\rho)f_{n-2}\dots f_{1}\\
&=&ff_{n-1}f_{n-2}\dots f_{1}+ff_{n-1}\rho f_{n-2}\dots f_{1}\\
&=&ff_{n-1}\rho f_{n-2}\dots f_{1}.
\end{array}
\end{equation*}

\noindent belongs to $ \Re^{n+2m}(X_1,X_{n+1})\backslash\Re^{n+2m+1}(X_1,X_{n+1})$,
because the morphisms belong to a sectional path of length $n+2m$.
Furthermore, $h_{n-1}\dots h_1\notin \Re^{n}(X_1,X_{n})$  and by  \cite[Proposition 2.3]{Cha2}
we have that $h_n\dots h_2\notin \Re^{n}(X_2,X_{n+1})$, proving the result.
\end{proof}

In the families of algebras presented in Theorem \ref{kpar}, there are $n$ irreducible morphisms such that their composition belong to  $\Re^{n+t}\backslash\Re^{n+t+1}$,
for  $t\geq 4$ and moreover where $t$ is an even number.

Below, we present a family of algebras for $t$ an odd number.
Consider $(V(m,n),J)$ for $n\geq 3$ and  $m\geq 2$ as follows:

{\begin{displaymath}
    \xymatrix   @R=.3cm  @C=.6cm {
    &&&&w&&\\
     & a_2\ar[r]^{\ga_2}& \dots\ar[r]^{\ga_{m-1}}& a_m\ar[rd]^{\ga_m}\ar[ru]^{\al}&&\\
     1\ar[ru]^{\ga_1}\ar[rd]_{\be_1} &&&& x& \\
      & b_2\ar[r]_{\be_2}& \dots\ar[r]_{\be_{n-1}}& b_{n}\ar[ru]_{\be_{n}}&&}
\end{displaymath}}

\noindent with $J=<\ga_{m-1}\ga_{m}>$.

We only state the result, since it can be proved with similar techniques as in Theorem \ref{kpar}.

\begin{thm}\label{kimpar}
Let $A =(V(m,n-2),J)$, with
$m\geq 2$ and $n\geq 3$. Then there are irreducible morphisms
$h_i:X_i\fle X_{i+1}$ for $1\leq i\leq n$, between indecomposable $A$-modules,
 such that
$h_n\dots h_1\in \Re^{n+2m+1}(X_1,X_{n+1})\backslash \Re^{n+2m+2}(X_1,X_{n+1})$,
$h_{n-1}\dots h_1\notin \Re^{n}(X_1,X_{n})$ and $h_n\dots h_2\notin \Re^{n}(X_2,X_{n+1})$.
\end{thm}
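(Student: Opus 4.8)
The plan is to reproduce, for the algebra $(V(m,n-2),J)$, the whole chain of results that culminates in Theorem \ref{kpar} — Lemma \ref{grcomp}, Propositions \ref{seccPI} and \ref{fLN}, Remark \ref{obsN} and the final assembly — keeping careful track of the one structural change caused by the extra arrow $\al\colon a_m\fle w$. First I would introduce, exactly as in Remark \ref{notaBG}, the direct and inverse strings $G_j,\overline{G}_j,B_i,\overline{B}_i$ along the two branches, together with the module $L$ sitting at the kink of the injective attached to the $\ga$-branch, the module $N$ obtained from $L$ by deleting its leading arrow, and the pertinent simple, projective and injective modules. All the combinatorics of \cite{CG} used in the $U$-case transfer to this setting once one records how the strings that run through $\al$ (equivalently through the sink $w$) enter the relevant quivers.

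Next I would establish the analogues of Lemma \ref{grcomp} and Proposition \ref{seccPI}. Reading off the degrees from the numbers of vertices of the quivers $Q^{e}$ and $Q^{s}$ of \cite{CG} as in Lemma \ref{grcomp}, one obtains a sectional path
\[
P\;\rightsquigarrow\;L\;\rightsquigarrow\;S\;\rightsquigarrow\;L\;\rightsquigarrow\;I
\]
whose modules correspond to the vertices of those quivers, but in which the cycle $L\rightsquigarrow S\rightsquigarrow L$ now has \emph{odd} length $2m+1$, rather than the even length $2m$ of Proposition \ref{seccPI}. The extra unit of length is supplied by $\al$: the relevant injective acquires one further composition factor (visible in the uniserial module $M(\overline{G}_1\al)=I_w$), which lengthens one half of the cycle from $m$ to $m+1$ while the other half stays of length $m$. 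In parallel, as in Proposition \ref{fLN}, I would show that the canonical irreducible epimorphism $f\colon L\fle N$ has $d_l(f)=n-1$, by locating $L$ on the sectional path issuing from the relevant projective. Splicing these facts as in Remark \ref{obsN} yields one sectional path
\[
P\;\stackrel{\phi}{\rightsquigarrow}\;L\;\stackrel{\rho}{\rightsquigarrow}\;L\;\stackrel{f}{\fle}\;N,
\qquad \ell(\phi)=n-1,\ \ \ell(\rho)=2m+1,
\]
so that $P\rightsquigarrow N$ has length $(n-1)+(2m+1)+1=n+2m+1$.

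From here the assembly is identical to Theorem \ref{kpar}. The configuration of almost split sequences attached to $d_l(f)=n-1$ supplies a sectional path $\delta\colon P=X_1\fle\dots\fle X_n=L$ of length $n-1$ with $ff_{n-1}\cdots f_1=0$; I set $X_{n+1}=N$ and define $h_i=f_i$ for $1\le i\le n-2$, $h_{n-1}=f_{n-1}+\rho f_{n-1}$ with $\rho\colon L\rightsquigarrow L$ the length-$(2m+1)$ cycle, and $h_n=f$. Then
\[
h_n\cdots h_1=ff_{n-1}\cdots f_1+f\rho f_{n-1}\cdots f_1=f\rho f_{n-1}\cdots f_1
\]
is a nonzero composite of $n+2m+1$ irreducible morphisms lying on a sectional path, hence belongs to $\Re^{n+2m+1}(X_1,X_{n+1})\setminus\Re^{n+2m+2}(X_1,X_{n+1})$. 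Every summand carrying $\rho$ is a composite of at least $n$ irreducible morphisms, so $h_{n-1}\cdots h_1\equiv f_{n-1}\cdots f_1\pmod{\Re^n}$; as the latter is a nonzero composite along the sectional path $\delta$ of length $n-1$, we get $h_{n-1}\cdots h_1\notin\Re^n(X_1,X_n)$, and $h_n\cdots h_2\notin\Re^n(X_2,X_{n+1})$ by \cite[Proposition 2.3]{Cha2}.

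The step I expect to be the main obstacle is establishing the \emph{odd} cycle length $2m+1$, since this is the only place where $(V(m,n-2),J)$ truly departs from $(U(m,n-1),I)$. It requires redoing the string combinatorics of \cite{CG} with $\al$ present, and here there is a genuine subtlety: because $a_m$ now starts two arrows, $\mbox{rad}\,P_{a_m}$ is decomposable, so Proposition \ref{seccPI} cannot be quoted verbatim, and the cycle has to be organized through the injective $I_w=M(\overline{G}_1\al)$ instead of directly through $a_m$. One must check that exactly one of the two halves of the cycle absorbs the extra arrow, so that the increment over the $U$-case is $+1$ and not $+2$; granting this, the degree computations and the final radical estimate are routine transcriptions of Lemma \ref{grcomp}, Propositions \ref{seccPI} and \ref{fLN}, and Theorem \ref{kpar}.
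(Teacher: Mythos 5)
Your proposal is correct and follows exactly the route the paper intends: the paper gives no argument for Theorem \ref{kimpar} beyond the remark that it ``can be proved with similar techniques as in Theorem \ref{kpar}'', and your plan is precisely that transcription, with the right bookkeeping $(n-1)+(2m+1)+1=n+2m+1$. You have also correctly isolated the one genuine point of divergence from the $U(m,n-1)$ case --- the extra arrow $\al$ makes $\mathrm{rad}\,P_{a_m}$ decomposable and forces the cycle through $L$ to pass via the uniserial injective $I_w$, lengthening exactly one half of it so that the cycle has odd length $2m+1$ --- which is the content the paper's ``similar techniques'' leaves implicit.
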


\end{document}